\def\ptl{\partial}
\def\cl{\centerline}
\def\al{\alpha}
\def\b{\beta}
\def\vs{\vspace*}
\def\Z{\mathbb{Z}}
\def\N{\mathbb{N}}
\def\C{\mathbb{C}}
\def\g{\mathbf{g}}
\def\C{\mathbb{C}}
\def\SUM#1#2{{\mbox{$\sum\limits_{#1}^{#2}$}}}
\numberwithin{equation}{section}
\newtheorem{theo}{Theorem}[section]
\newtheorem{defi}[theo]{Definition}
\newtheorem{lemm}[theo]{Lemma}
\newtheorem{prop}[theo]{Proposition}
\newtheorem{clai}{Claim}
\newtheorem{remark}[theo]{Remark}
\begin{document}
\sloppy{}
\baselineskip 6pt
\lineskip 6pt

\begin{center}{\Large\bf Simple modules over the Takiff Lie algebra for $\mathfrak{sl}_{2}$}
\footnote {This paper was written during the visit of the author to Uppsala University as a guest PhD student.
The hospitality of Uppsala University and the financial support of  China Scholarship Council (No. 202006260124) are greatly appreciated.
\\ \indent \ \ Corresponding author: Xiaoyu Zhu (1810079@tongji.edu.cn)}
\end{center}
\vs{6pt}

\cl{Xiaoyu Zhu}
\cl{\footnotesize School of Mathematical Sciences, Tongji University, Shanghai 200092, China}
\cl{\footnotesize E-mail: 1810079@tongji.edu.cn}
\quad\\
\vs{12pt} \par

\noindent{{\bf Abstract.}} In this paper, we construct, investigate and, in some cases,
classify several new classes of (simple) modules over the Takiff $\mathfrak{sl}_{2}$.
More precisely, we first explicitly construct and classify, up to isomorphism, all
modules over the Takiff $\mathfrak{sl}_{2}$ that are
$U\left(\overline{\mathfrak{h}}\right)$-free of rank one. These split into three general families of modules.
The sufficient and necessary conditions for simplicity of these modules are presented,
and their isomorphism classes are determined. Using the vector space duality and
Mathieu's twisting functors, these three classes of modules are used to
construct new families of weight modules over the Takiff $\mathfrak{sl}_{2}$.
We give necessary and sufficient conditions for these weight modules to be simple
and, in some cases, completely determine their submodule structure.
\vs{6pt}

\noindent{\bf Keywords:} Takiff $\mathfrak{sl}_{2}$, $U\left(\overline{\mathfrak{h}}\right)$-free module, weight module, Mathieu's twisting functor.

\noindent{\it Mathematics Subject Classification (2020):} 17B10, 17B35
\section{Introduction}
Classification and construction of modules are two fundamental problems in
contemporary representation theory. In representation theory of Lie algebras,
the problem of classification of all modules is too difficult, in general.
It is thus natural to restrict the classification problem from all
modules to ``smaller'' natural classes of modules.

One of the most natural general classes of modules is that of simple modules.
Unfortunately, in the case of Lie algebras, it turns out that classification
of simple modules is also too difficult, in general. In fact, in the case
of semi-simple complex Lie algebras, the Lie algebra $\mathfrak{sl}_{2}$
is the only algebra for which there is some version of such classification,
see \cite{RB,VM}. At the same time, there are various natural classes of
simple modules over semi-simple Lie algebras that are fairly well-understood.
These include all simple finite dimensional modules (see e.g. \cite{EC,JD}),
simple highest weight modules (cf. \cite{JD,JEH}), simple Whittaker modules
(cf. \cite{BM,BK}), simple Gelfand-Zeitlin modules (cf. \cite{DFO,EMV,Webster})
and simple weight modules with finite dimensional weight spaces (also known as
Harish-Chandra modules), see \cite{SF,M}.

A new family of simple Lie algebra modules, usually called
``$U(\mathfrak{h})$-free modules'', was recently introduced by
J. Nilsson in \cite{N} and, independently, by H. Tan and K. Zhao in \cite{TZ1}.
The modules in this family are free of finite rank when restricted to a
fixed Cartan subalgebra of the Lie algebra in question. In \cite{N},
Nilsson classified simple $U(\mathfrak{h})$-free modules of rank one over the simple
Lie algebra $\mathfrak{sl}_{n+1}$. Moreover, it was shown that such modules
can exist only when the underlying Lie algebra is of type $A$ or $C$ in \cite{N1}.
For the algebra $\mathfrak{sl}_{n+1}$, an alternative construction of such
modules is given in \cite{TZ1}. $U(\mathfrak{h})$-free modules were constructed
and classified in a few other situations, for example, for the Witt algebra
in  \cite{TZ}, for the Virasoro algebra in \cite{CC}, for the
Heisenberg Virasoro algebra in \cite{CG}, see also \cite{CG1,CY,SYZ,YYX,YYX1}
for various other setups.

Semi-simple Lie superalgebras and non-semi-simple Lie algebras are
two natural generalizations of semi-simple Lie algebras. For the first family,
in many cases, the problem of classification of simple supermodules can be
reduced to the problem of classification of simple modules over the corresponding even Lie algebra,
see \cite{CCM} and \cite{CM}. This shows, in particular, that the problem of classification
of simple modules over Lie algebras is important for the study of Lie superalgebras.
For the second family of generalizations, namely, for non-semi-simple Lie algebras,
the problem of classification of simple modules is not yet as well-studied as
in the case of semi-simple Lie algebras. Natural families of non-semi-simple
Lie algebras include, in particular, current Lie algebras,
conformal Galilei algebras, Takiff Lie algebras and others.

Current Lie algebras form a natural generalization of semi-simple Lie algebras
and are studied very intensively due to their connection to Kac-Moody Lie
algebras and quantum groups. For this type Lie algebras, there is a full
classification of simple Harish-Chandra modules, see \cite{ML}. The highest
weight theory over truncated current Lie algebra was investigated in \cite{BJW}.

Takiff Lie algebras are the "smallest" non-semi-simple truncated current Lie algebras.
The Takiff Lie algebra for $\mathfrak{sl}_{2}$ belongs also to  the class of conformal
Galilei algebras (see, e.g. \cite{LMZ}) and is defined as the semidirect product of
$\mathfrak{sl}^{}_{2}$ with its adjoint representation. Such Lie algebras were
first introduced by Takiff in \cite{T}, where the invariant theory for such
Lie algebras was developed. Takiff Lie algebras were further investigated
recently by several people, see \cite{MC,MS} and references therein. In order
to attempt the problem of classification of modules over non-semi-simple Lie algebras,
it is natural to consider first this problem for the Takiff $\mathfrak{sl}_{2}$.

In the present paper we construct, investigate and, in some cases, classify
several new classes of (simple) modules over the Takiff $\mathfrak{sl}_{2}$.
First, we explicitly construct and classify,  up to isomorphism, all
$U\left(\overline{\mathfrak{h}}\right)$-free modules of rank one. These split into three general
families of modules, which we denote $\Gamma(\lambda,a,b)$,
$\Theta(\lambda,a,b)$ and $\Omega\left(\lambda,b,\beta_{1}\left(\overline{h}\right)\right)$.
Our main results about these modules are collected into the following
two theorems.

\begin{theo}\label{prop-3.2bc}
Let $M$ be a $\g$-module such that the restriction of $U(\g)$ to
$U\left(\overline{\mathfrak{h}}\right)$ is free of rank one. Then $M$ is
isomorphic to either $\Gamma(\lambda,a,b)$, or $\Theta(\lambda,a,b)$ or
$\Omega\left(\lambda,b,\beta_{1}\left(\overline{h}\right)\right)$,
for some $\lambda\in \C^{\times}$, $a,b\in \C$ and
$\beta_{1}\left(\overline{h}\right)\in \C\left[\,\overline{h}\,\right]$, which
are defined in Definition \ref{defi01}.
\end{theo}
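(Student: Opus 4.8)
The plan is to determine such a module completely by its structure constants on a free generator, and then to solve the resulting system of equations. Fix a free generator $v$ and identify $M=U(\overline{\mathfrak h})v\cong\C[h,\overline{h}]$, so that $h$ and $\overline{h}$ act as multiplication by $h$ and $\overline{h}$ and the entire $\g$-action is recorded in the four linear operators on $\C[h,\overline{h}]$ given by $e,\overline{e},f,\overline{f}$. First I would exploit the Cartan relations to put these operators into normal form. Since $[\overline{h},\overline{e}]=0$ and $[h,\overline{e}]=2\overline{e}$, the operator of $\overline{e}$ is forced to be $p\mapsto(\overline{e}\cdot1)\,p(h-2,\overline{h})$, and then $[\overline{h},e]=2\overline{e}$ together with $[h,e]=2e$ force $e$ to act by $p\mapsto(e\cdot1)\,p(h-2,\overline{h})-2(\overline{e}\cdot1)\,(\partial_{\overline{h}}p)(h-2,\overline{h})$; the operators of $\overline{f}$ and $f$ have the mirror form, with $h-2$ replaced by $h+2$ and the sign of the $\partial_{\overline{h}}$-correction reversed. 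Thus everything is governed by the four polynomials $e\cdot1,\ \overline{e}\cdot1,\ f\cdot1,\ \overline{f}\cdot1\in\C[h,\overline{h}]$.

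Next I would impose the remaining brackets $[\overline{e},\overline{f}]=0$, $[e,\overline{e}]=0$, $[f,\overline{f}]=0$, $[e,\overline{f}]=\overline{h}$, $[\overline{e},f]=\overline{h}$ and $[e,f]=h$. For each bracket, comparing the coefficients of $p$, $\partial_{\overline{h}}p$ and $\partial_{\overline{h}}^{2}p$ (after the relevant shift in $h$) converts the operator identity into polynomial identities in the four unknowns. The decisive one comes from $[\overline{e},\overline{f}]=0$, namely $(\overline{e}\cdot1)(h,\overline{h})\,(\overline{f}\cdot1)(h-2,\overline{h})=(\overline{f}\cdot1)(h,\overline{h})\,(\overline{e}\cdot1)(h+2,\overline{h})$, and the same identity reappears as the top-order term of $[e,\overline{f}]$ and $[\overline{e},f]$. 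Two observations clear the ground: comparing the $p$-coefficients of $[e,\overline{f}]=\overline{h}$ and $[\overline{e},f]=\overline{h}$ shows that $\overline{f}\cdot1$ and $\overline{e}\cdot1$ cannot vanish, so the nilpotent radical always acts nontrivially; and comparing the next-to-leading coefficient in $h$ of the displayed identity (a sum-of-roots computation) forces $\deg_{h}(\overline{e}\cdot1)=\deg_{h}(\overline{f}\cdot1)=0$, so that $\overline{e}\cdot1$ and $\overline{f}\cdot1$ lie in $\C[\overline{h}]$.

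With the radical symbols pinned down in $\C[\overline{h}]$, the relations $[e,\overline{e}]=0$ and $[f,\overline{f}]=0$ force $e\cdot1=(\overline{e}\cdot1)'\,h+c(\overline{h})$ and $f\cdot1=(\overline{f}\cdot1)'\,h+d(\overline{h})$ for some $c,d\in\C[\overline{h}]$, where $'$ denotes $\partial_{\overline{h}}$; the mixed relations $[e,\overline{f}]=[\overline{e},f]=\overline{h}$ then integrate to $(\overline{e}\cdot1)(\overline{f}\cdot1)=-\tfrac14\overline{h}^{2}+b$ for a scalar $b\in\C$, and finally $[e,f]=h$ is automatic in its top two orders and contributes a single first-order identity coupling $c$ and $d$. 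At this point I would branch according to the degrees in $\overline{h}$ of the pair $(\overline{e}\cdot1,\overline{f}\cdot1)$, which by the product constraint must be $(0,2)$, $(2,0)$ or $(1,1)$; solving the leftover identity for $c,d$ in each branch and reading off the leading scalar $\la\in\C^{\times}$ of the radical action produces the three families $\Gamma(\la,a,b)$, $\Theta(\la,a,b)$ and $\Omega\!\left(\la,b,\beta_{1}(\overline{h})\right)$ of Definition \ref{defi01}, with the residual polynomial freedom recorded by $\beta_{1}(\overline{h})$ in the $\Omega$-branch. The converse direction is a direct verification that these assignments satisfy all the brackets. The main obstacle is exactly the middle step: running the sum-of-roots argument cleanly to eliminate the $h$-dependence of the radical symbols, and then keeping the $\partial_{\overline{h}}$-correction terms straight through the case analysis so as to show that the three branches exhaust all solutions and to normalize the leftover data into precisely the forms $\la,a,b,\beta_{1}$ of the definition.
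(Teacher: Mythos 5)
Your skeleton is the same as the paper's: identify $M$ with $\C\left[\,h,\overline{h}\,\right]$, derive the normal form of Lemma \ref{lemma1.1}, and reduce the module axioms to polynomial identities in the four symbols $e\cdot1$, $\overline{e}\cdot1$, $f\cdot1$, $\overline{f}\cdot1$. Your middle game is correct and in fact tighter than the paper's: the sum-of-roots argument giving ${\rm deg}_{h}(\overline{e}\cdot1)={\rm deg}_{h}(\overline{f}\cdot1)=0$ is the paper's first lemma, the non-vanishing of $\overline{e}\cdot1,\overline{f}\cdot1$ is right, the forms $e\cdot1=(\overline{e}\cdot1)'h+c(\overline{h})$, $f\cdot1=(\overline{f}\cdot1)'h+d(\overline{h})$ and the integrated constraint $(\overline{e}\cdot1)(\overline{f}\cdot1)=-\tfrac14\overline{h}^{2}+{\rm const}$ all follow exactly as you say, and in the balanced branch $(1,1)$ your leftover identity is precisely the paper's (\ref{e34}), yielding the matrix condition (\ref{1}) for $\Omega$. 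The gap is the endgame. Because your system of identities is \emph{complete} (the coefficient-of-$\overline{\partial}p$ identities are satisfied identically once the forms of $e\cdot1$ and $f\cdot1$ are imposed), ``solving the leftover identity for $c,d$ in each branch'' does not produce only the three families. In the branch $\left({\rm deg}_{\overline{h}}\overline{e}\cdot1,{\rm deg}_{\overline{h}}\overline{f}\cdot1\right)=(0,2)$, write $\overline{e}\cdot1=\lambda$, $\overline{f}\cdot1=-\frac{1}{4\lambda}\left(\overline{h}^{2}+a\right)$, so $e\cdot1=c\left(\overline{h}\right)$ and $f\cdot1=-\frac{\overline{h}}{2\lambda}h+d\left(\overline{h}\right)$; your leftover identity from $[e,f]=h$ becomes
\begin{equation*}
\frac{\overline{h}}{\lambda}\,c\left(\overline{h}\right)+\frac{\overline{h}^{2}+a}{2\lambda}\,c'\left(\overline{h}\right)-2\lambda\, d'\left(\overline{h}\right)-2=0,
\end{equation*}
which is solvable for $d$ by integration for \emph{every} $c\in\C\left[\,\overline{h}\,\right]$. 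Only $c=0$ recovers $\Gamma(\lambda,a,b)$; taking $c=c_{0}\neq0$ constant gives $d=\frac{c_{0}}{4\lambda^{2}}\overline{h}^{2}-\frac{\overline{h}}{\lambda}+d_{0}$, and one checks directly that all six brackets then hold as operators on $\C\left[\,h,\overline{h}\,\right]$. Since the free generator of a rank-one $U\left(\overline{\mathfrak{h}}\right)$-free module is unique up to a nonzero scalar, the quadruple $\left(e\cdot1,\overline{e}\cdot1,f\cdot1,\overline{f}\cdot1\right)$ is a complete isomorphism invariant, and these solutions match none of $\Gamma$, $\Theta$, $\Omega$. So the assertion that the three degree branches ``produce the three families'' is not an omitted computation: nothing in the relations you listed forces $c=0$ in the unbalanced branches.

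It is worth comparing this with where the paper spends its effort. The paper's trichotomy is by vanishing of $e\cdot1$ and $f\cdot1$, and the configuration above falls into its ``Case 1'' (${\rm deg}_{h}\al+{\rm deg}_{h}\beta=1$ with both $e\cdot1,f\cdot1\neq0$), which the paper dispatches by asserting from degree comparison that ${\rm deg}_{\overline{h}}\al=1$ and then extracting a contradiction from (\ref{eq2.3}). But your own normal form makes transparent that the relations pin only the coefficient of $h$ in $e\cdot1$ (namely $(\overline{e}\cdot1)'$), leaving the $h$-free part $c\left(\overline{h}\right)$ of unbounded $\overline{h}$-degree; the paper's parametrization $\al=a_{0,0}+a_{0,1}h+(a_{1,0}+a_{1,1}h)\overline{h}$ builds in a degree bound that none of the displayed identities justifies. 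Consequently you cannot defer to ``solving the leftover identity'': a complete argument must confront the $(0,2)$ and $(2,0)$ configurations head-on, either excluding them by some identity not in your list or acknowledging them in the classification. This is exactly the ``main obstacle'' you flagged at the end, and the sketch does not get past it.
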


\begin{theo}\label{prop-3.2a} Let $\lambda\in\C^{\times}$, $a,b\in\C$ and
$\beta_{1}\left(\overline{h}\right)\in \C\left[\,\overline{h}\,\right]$. Then the following holds:
\begin{itemize}
  \item [\rm(i)] The $\g$-modules $\Gamma\left(\lambda,a,b\right)$ and $\Theta(\lambda,a,b)$
are simple. We  have $\Gamma(\lambda,a,b)\cong \Gamma(\lambda',a',b')$ if and only if
$\lambda=\lambda'$, $a=a'$ and $b=b'$. Similarly,
$\Theta(\lambda,a,b)\cong \Theta(\lambda',a',b')$ if and only if
$\lambda=\lambda'$, $a=a'$ and $b=b'$.
  \item [\rm(ii)] The $\g$-module
$\Omega\left(\lambda,b,\beta_{1}\left(\overline{h}\right)\right)$ is simple
if and only if $b\neq0$. We have
$\Omega\left(\lambda,b,\beta_{1}\left(\overline{h}\right)\right)\cong \Omega\left(\lambda',b',\beta'_{1}\left(\overline{h}\right)\right)$
if and only if $\lambda=\lambda'$, $b=b'$ and
$\beta_{1}\left(\overline{h}\right)=\beta'_{1}\left(\overline{h}\right)$.
\end{itemize}
\end{theo}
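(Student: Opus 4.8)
The plan is to separate the two kinds of assertions. Throughout I will use that, by the construction in Definition \ref{defi01}, each of $\Gamma(\lambda,a,b)$, $\Theta(\lambda,a,b)$ and $\Omega\left(\lambda,b,\beta_{1}\left(\overline{h}\right)\right)$ is the polynomial algebra $\C\left[h,\overline{h}\right]$ on which $h$ and $\overline{h}$ act as multiplication by the respective variables, while $e,f,\overline{e},\overline{f}$ act by explicit shift/difference operators in $h$ (the radical generators $\overline{e},\overline{f}$) together with derivative-type terms in $\overline{h}$ (the generators $e,f$). Two immediate consequences organize everything: first, any $\g$-submodule is stable under multiplication by $h$ and $\overline{h}$, hence is an \emph{ideal} of $\C\left[h,\overline{h}\right]$ that is in addition invariant under the four operators above; second, any $\g$-module isomorphism intertwines the multiplication operators $h,\overline{h}$ and is therefore $\C\left[h,\overline{h}\right]$-linear.

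I would first dispose of the isomorphism statements. If $\phi$ is an isomorphism between two members of the same family, then being a $\C\left[h,\overline{h}\right]$-linear bijection of a free rank-one module it is multiplication by a unit of $\C\left[h,\overline{h}\right]$, i.e.\ a nonzero scalar. Transporting the action of $e,f,\overline{e},\overline{f}$ through a scalar shows that the \emph{operators themselves must coincide}, so the polynomial coefficients appearing in the formulas of Definition \ref{defi01} agree. Reading off the leading shift factor (governed by $\lambda$) and the remaining lowest-degree coefficients then forces $\lambda=\lambda'$, $a=a'$, $b=b'$ for $\Gamma$ and for $\Theta$, and $\lambda=\lambda'$, $b=b'$, $\beta_{1}\left(\overline{h}\right)=\beta'_{1}\left(\overline{h}\right)$ for $\Omega$ (the last equality coming from comparing the $\overline{h}$-polynomial coefficient). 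The converse is trivial, since equal parameters give literally the same module.

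For simplicity of $\Gamma(\lambda,a,b)$ and $\Theta(\lambda,a,b)$, let $0\neq I$ be a submodule, viewed as an ideal invariant under $e,f,\overline{e},\overline{f}$; I want $1\in I$. The mechanism is a two-step degree reduction. To lower the $\overline{h}$-degree, I apply the derivative-type term carried by $e$ (or $f$), reaching a nonzero element of $\C[h]$. To lower the $h$-degree, I use that $\overline{e}$ and $\overline{f}$ act (up to nonzero scalars controlled by $\lambda\in\C^{\times}$) as the two opposite shifts $h\mapsto h\mp 2$, so that a suitable linear combination realizes a backward–forward difference whose leading terms cancel and strictly drops the $h$-degree. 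Iterating these two reductions drives any nonzero element of $I$ to a nonzero constant, giving $1\in I$; and applying the shift-and-multiply operators to $1$ produces polynomials of every bidegree, so $1$ generates $\C\left[h,\overline{h}\right]$ and $I=M$. Because $\lambda\neq 0$, none of the shift operators degenerates, so the scheme never stalls.

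For $\Omega\left(\lambda,b,\beta_{1}\left(\overline{h}\right)\right)$ the parameter $b$ is decisive, and this is also where I expect the main difficulty. When $b\neq 0$ I would run the same reduction, the factor $b$ being precisely what keeps the relevant operator (the one producing the $h$-degree drop) nondegenerate, so $\Omega$ is simple exactly as above. When $b=0$ the formulas of Definition \ref{defi01} should make that operator vanish, disabling one direction of the reduction and leaving a proper nonzero invariant ideal (for instance the image of the surviving degree-preserving operator, or a submodule of the form $g\left(\overline{h}\right)\C\left[h,\overline{h}\right]$ determined by $\beta_{1}$), which I would exhibit explicitly to prove non-simplicity. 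The hard part will be the bookkeeping of the two-variable reduction in the simple cases: verifying that some explicit composite of $e,f,\overline{e},\overline{f}$ achieves a \emph{net} strict decrease in total degree rather than merely translating, and that the induction terminates at a nonzero constant instead of accidentally at $0$. Locating the precise proper submodule at $b=0$ — i.e.\ understanding exactly how the $b=0$ degeneracy breaks the generation argument — is the same computation read in reverse, and is the crux on which both halves of part (ii) turn.
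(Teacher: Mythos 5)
Your isomorphism arguments and your simplicity proof for $\Gamma(\lambda,a,b)$ and $\Theta(\lambda,a,b)$ are sound and close to the paper's. For the isomorphisms, the paper likewise observes that any isomorphism is $\C\left[h,\overline{h}\right]$-linear, hence multiplication by a nonzero scalar, and then compares the coefficient polynomials in the actions of $\overline{e}$, $\overline{f}$ and $f$ to force equality of parameters. For simplicity of $\Gamma$ and $\Theta$, your first step (iterating the pure derivative-plus-shift action of $e$, resp.\ $f$, to land on a nonzero element $\gamma_{1}(h)\in\C[h]$) is exactly the paper's; for the second step the paper does something different from your shift-difference: it uses $N\left(\overline{e}^{k}\cdot\gamma_{1}(h)\right)=N(\gamma_{1}(h))+2k$, chooses $k$ so the zero sets are disjoint, and applies Bezout to the coprime pair $\gamma_{1}(h)$, $\overline{e}^{k}\cdot\gamma_{1}(h)$ to get $1$ in the submodule. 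Your variant — subtracting $\lambda^{-1}\overline{e}\cdot\gamma_{1}=\gamma_{1}(h-2)$ from $\gamma_{1}(h)$ — works equally well, since a nonconstant polynomial is never $2$-periodic, so each difference is nonzero of strictly smaller degree and the iteration ends at a nonzero constant.

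The genuine gap is part (ii), which you flag as the crux but whose mechanism you misidentify, so your plan would fail as stated. In $\Omega\left(\lambda,b,\beta_{1}\left(\overline{h}\right)\right)$ \emph{every} operator carries a factor $\overline{h}\pm b$ on its interesting part: $\overline{e}$ and $\overline{f}$ are the shifts multiplied by $\frac{\lambda}{2}\left(\overline{h}+b\right)$ and $-\frac{1}{2\lambda}\left(\overline{h}-b\right)$, and the derivative terms of $e,f$ are $-\lambda\left(\overline{h}+b\right)\overline{\partial}$ and $-\frac{1}{\lambda}\left(\overline{h}-b\right)\overline{\partial}$. Consequently your first reduction never gets off the ground here: the factor $\left(\overline{h}\pm b\right)$ exactly offsets the degree drop of $\overline{\partial}$, so applying $e$ or $f$ never lowers the $\overline{h}$-degree and you cannot reach $\C[h]$. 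Moreover the role of $b\neq0$ is not, as you claim, to keep the $h$-degree-dropping operator nondegenerate — the shifts are fine for all $b$ — but to make $\overline{h}+b$ and $\overline{h}-b$ coprime. The paper's actual route: take $\gamma$ of minimal $h$-degree in the submodule; since $\left(\frac{\lambda}{2}\left(\overline{h}+b\right)-\overline{e}\right)\cdot\gamma=\frac{\lambda}{2}\left(\overline{h}+b\right)\left(\gamma\left(h,\overline{h}\right)-\gamma\left(h-2,\overline{h}\right)\right)$ lies in the submodule with strictly smaller $h$-degree unless it vanishes, minimality forces $\gamma\in\C\left[\,\overline{h}\,\right]$; then $f$ (modulo multiplication operators) yields $\left(\overline{h}-b\right)\overline{\partial}\left(\gamma\right)$, iterated to $\left(\overline{h}-b\right)^{m}$; one application of $e$ produces $\left(\overline{h}+b\right)\left(\overline{h}-b\right)^{m-1}$, and the identity $\left(\overline{h}-b\right)^{m-1}=\frac{1}{2b}\left(\left(\overline{h}+b\right)\left(\overline{h}-b\right)^{m-1}-\left(\overline{h}-b\right)^{m}\right)$ — the \emph{only} place $b\neq0$ is used — peels the power down to $1$. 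Note that even here the $\overline{h}$-degree drops only through this $\frac{1}{2b}$ step, never through the operators themselves. Finally, at $b=0$ the proper submodules are $\overline{h}^{i}\,\C\left[\,h,\overline{h}\,\right]$ for $i\in\Z_{+}$, visibly invariant because every operator then deposits a factor $\overline{h}$ or acts by $\overline{h}\,\overline{\partial}$; they are independent of $\beta_{1}$, so your guess of an ideal $g\left(\overline{h}\right)\C\left[\,h,\overline{h}\,\right]$ ``determined by $\beta_{1}$'' points in the wrong direction ($\beta_{1}$ only governs simplicity of the subquotients, which the paper identifies as $\mathfrak{sl}_{2}$-modules $\Delta_{1}$). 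The missing ideas, then, are the passage to $\C\left[\,\overline{h}\,\right]$ via minimality of $h$-degree and the coprimality of $\overline{h}\pm b$.
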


Next, we use the  vector space duality and Mathieu's twisting functors,
to construct, based on $\Gamma(\lambda,a,b)$, $\Theta(\lambda,a,b)$ and
$\Omega\left(\lambda,b,\beta_{1}\left(\overline{h}\right)\right)$,
and investigate three new families, $M_{\al,\b}^{\lambda,a,b}$, $N_{\al,\b}^{\lambda,a,b}$ and $V_{\al,\b}^{\lambda,a,\beta_{1}\left(\overline{h}\right)}$,
of simple weight modules over the Takiff $\mathfrak{sl}_{2}$. The corresponding results are collected in
the following two theorems, combined with Theorems \ref{vztheo4.2} and \ref{theo4.7}.

\begin{theo}\label{vztheo4.1}Let $M_{\al,\b}^{\lambda,a,b}$ be a $\g$-module defined in Proposition $\ref{defi3.1}$. Then the following holds:
\begin{itemize}
  \item[\rm(i)] The module  $M_{\al,\b}^{\lambda,a,b}$ is a simple $\g$-module if and only if $\b^{2}+a\neq0$ or $\b^{2}+a=0$ and $(\al_{k}+2s)\b+b\neq0$, for any $k\in \Z$ and $s\in \N$.
  \item[\rm(ii)] If  $M_{\al,\b}^{\lambda,a,b}$ is not simple, then $M_{k,s}:=U(\g)\eta_{\al_{k},\b}$ is the (opposite) Verma module with the lowest weight $\eta_{\al_{k},\b}$, where $k\in \Z$ is such that $(\al_{k}+2s)\b+b=0$, for some $s\in \N$. Moreover, $M_{\al,\b}^{\lambda,a,b}/M_{k,s}\cong N_{k,s}$, where $N_{k,s}$ is a generalized weight $U(\g)$-module with infinite dimensional weight spaces.
  \item[\rm(iii)] For any $\al,\b,a,b\in \C$ and $\lambda,\lambda'\in \C^{\times}$, we have $M_{\al,\b}^{\lambda,a,b}\cong M_{\al,\b}^{\lambda',a,b}$.
  \item[\rm(iv)] For any $\al,\b,\al',\b',a,b,a',b'\in \C$ and $\lambda,\lambda'\in \C^{\times}$, we have
  $M_{\al,\b}^{\lambda,a,b}\cong M_{\al',\b'}^{\lambda',a',b'}$ if and only if
  $\al-\al'\in 2\Z$, $\b=\b'$, $a=a'$ and $b=b'$.
\end{itemize}\end{theo}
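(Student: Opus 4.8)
The plan is to work throughout with the weight-space decomposition of $M_{\al,\b}^{\la,a,b}$ relative to $\overline{\mathfrak{h}}$. By construction via Mathieu's twisting functor applied to the dual of $\Gamma(\la,a,b)$, the $h$-eigenvalues occurring are exactly $\al_{k}=\al+2k$, $k\in\Z$, and each weight space is spanned by the vectors $\eta_{\al_{k},\b}$ together with their $\overline{\mathfrak{h}}$-descendants indexed by $s\in\N$; in particular the weight spaces are infinite dimensional and $\overline{h}$ acts through the single generalized eigenvalue $\b$. First I would extract, from Proposition \ref{defi3.1}, the explicit action of $e,f,\overline{e},\overline{f}$ on this basis, noting that all four operators shift the $h$-weight by $\pm2$ while products such as $e\,\overline{f}$ and $\overline{e}\,\overline{f}$ preserve a weight space. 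I would then isolate the two scalar quantities that control the module structure: the value of the (abelian, hence $\g$-invariant) composite $\overline{e}\,\overline{f}$, which I expect to be governed by $\b^{2}+a$, and the scalar by which a raising operator annihilates a basis vector, which I expect to be $(\al_{k}+2s)\b+b$.

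For part (i), let $N$ be a nonzero submodule. Using the $h$-weight grading I first reduce to the case that $N$ contains a nonzero weight vector, and then aim to show $N=M_{\al,\b}^{\la,a,b}$. If $\b^{2}+a\neq0$, then $\overline{e}\,\overline{f}$ acts invertibly on every weight space, so $\overline{e}$ and $\overline{f}$ restrict to mutually inverse bijections between consecutive weight spaces and, together with $e$ and $f$, allow me to pass from any nonzero vector to a cyclic generator; hence $M_{\al,\b}^{\la,a,b}$ is simple. If $\b^{2}+a=0$, then $\overline{e}\,\overline{f}=0$, and this connectivity argument can fail exactly when some raising (or lowering) operator kills a weight vector, i.e. when $(\al_{k}+2s)\b+b=0$ for some $k\in\Z$, $s\in\N$; conversely I would verify that if no such vanishing occurs then the remaining operators still connect all weight spaces, giving simplicity. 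This yields the stated equivalence.

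For part (ii), suppose $\b^{2}+a=0$ and $(\al_{k}+2s)\b+b=0$. Then $\eta_{\al_{k},\b}$ is a lowest weight vector, and I would check that $M_{k,s}=U(\g)\eta_{\al_{k},\b}$ is free over the subalgebra generated by the raising operators, i.e. it is the opposite Verma module with lowest weight $\eta_{\al_{k},\b}$. Computing the quotient directly, one sees that $\overline{h}$ no longer acts semisimply on $M_{\al,\b}^{\la,a,b}/M_{k,s}$, so it is the asserted generalized weight module $N_{k,s}$ with infinite dimensional weight spaces.

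Part (iii) is an explicit rescaling: since $\la\in\C^{\times}$ enters only as an overall scaling of the weight basis, the diagonal map sending each basis vector to a suitable $\la/\la'$-multiple intertwines the two actions, and I would simply check commutation with $e,f,\overline{e},\overline{f}$. For part (iv), necessity follows by extracting isomorphism invariants: an isomorphism must match the set of $h$-weights, forcing $\al-\al'\in2\Z$; it must match the generalized $\overline{h}$-eigenvalue, forcing $\b=\b'$; and it must match the values of the $\g$-invariant $\overline{e}\,\overline{f}$ and of the affine-in-weight scalar $(\al_{k}+2s)\b+b$ on corresponding vectors, which forces $a=a'$ and then $b=b'$; sufficiency is a weight-shifting isomorphism composed with the rescaling of (iii). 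The main obstacle throughout will be the degenerate case $\b^{2}+a=0$ of part (i) together with the verification in (ii): once $\overline{e}\,\overline{f}$ vanishes the easy bijection between weight spaces disappears, so deciding precisely when the remaining operators still generate the whole module, and confirming that the single obstruction corresponds exactly to an opposite Verma submodule, requires careful bookkeeping across the infinite dimensional weight spaces.
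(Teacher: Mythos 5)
Your overall route coincides with the paper's: project into an $h$-weight space, use the Jordan structure of $\overline{h}$ (applying $\overline{h}+\b$ lowers $s$) to isolate $\eta_{\al_{k},\b}$, read the simplicity criterion off the coefficients $\b^{2}+a$ and $(\al_{k}+2s)\b+b$ in (\ref{vz14.5})--(\ref{vz14.6}), prove (iii) by the diagonal map $\eta_{\al_{k},\b^{s}}\mapsto (\la/\la')^{k}\eta_{\al_{k},\b^{s}}$, and extract the invariants for (iv) by the same sequence of operator comparisons. However, two of your steps are wrong as written. First, in the degenerate case you claim that $\b^{2}+a=0$ forces $\overline{e}\,\overline{f}=0$; this fails whenever $\b\neq 0$: by (\ref{vz4.11}) and (\ref{vz14.5}) one has $\overline{e}\cdot\overline{f}\cdot\eta_{\al_{k},\b^{2}}=-\frac{\b}{2}\,\eta_{\al_{k},\b}\neq 0$. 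What is true is only that the $s$-preserving (diagonal) part of $\overline{f}$ vanishes, so $\overline{e}\,\overline{f}$ strictly lowers the $s$-filtration and is locally nilpotent; upward connectivity in $k$ is then carried solely by the $\eta_{\al_{k+1},\b^{s}}$-component of $f$, with coefficient $\frac{(\al_{k}+2s)\b+b}{2\la}$, which is the correct replacement for your step and is what the paper's argument uses. Relatedly, when $\b^{2}+a\neq 0$ your phrase ``mutually inverse bijections'' is too strong: $\overline{e}$ is always bijective, and $\overline{e}\,\overline{f}$ is triangular in $s$ with constant nonzero diagonal $-\frac{\b^{2}+a}{4}$, hence bijective on each weight space; that weaker statement suffices.

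Second, in (ii) you assert that $(\al_{k}+2s)\b+b=0$ makes $\eta_{\al_{k},\b}$ itself a lowest weight vector. For $s>1$ and $\b\neq 0$ this is false: since $\b^{2}+a=0$, formula (\ref{vz14.6}) gives $f\cdot\eta_{\al_{k},\b}=\frac{(\al_{k}+2)\b+b}{2\la}\,\eta_{\al_{k+1},\b}=-\frac{(s-1)\b}{\la}\,\eta_{\al_{k+1},\b}\neq 0$. You must reindex as the paper does: set $k_{0}=k+s-1$, so that $(\al_{k_{0}}+2)\b+b=0$ and $\eta_{\al_{k_{0}},\b}$ is annihilated by both $f$ and $\overline{f}$; its $U(\g)$-span is then the opposite Verma submodule (your freeness check goes through after the shift, since $e^{i}\overline{e}^{\,j}\cdot\eta_{\al_{k_{0}},\b}$ is a nonzero multiple of $\eta_{\al_{k_{0}-i-j},\b^{i+1}}$, and these are distinct basis vectors). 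Two smaller slips: $M_{\al,\b}^{\la,a,b}$ is not constructed ``via Mathieu's twisting functor'' --- it is defined directly as the span of the $\eta_{\al_{k},\b^{s}}$ inside $\Gamma(\la,a,b)^{*}$, and twisting only enters in Theorem \ref{th0}; and $\overline{e}\,\overline{f}$ is not $\g$-invariant, since $[e,\overline{e}\,\overline{f}]=\overline{e}\,\overline{h}\neq 0$. Fortunately invariance is irrelevant for (iv): a module isomorphism intertwines every element of $U(\g)$, so once $\b=\b'$ is known, comparing the scalar $-\frac{\b^{2}+a}{4}$ by which $\overline{e}\,\overline{f}$ acts on $\eta_{\al_{k},\b}$ still yields $a=a'$, after which the $f$-comparison (preceded, as in the paper, by normalizing the constants $d_{k}$ via $\overline{e}$ and pinning down $\psi\left(\eta_{\al,\b^{2}}\right)$ via $e$) gives $b=b'$.
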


\begin{theo}\label{th0}For any $\al,\b, z\in \C$, we  have $B_{z}M_{\al,\b}^{\lambda,a,b}\cong M_{\al-2z,\b}^{\lambda,a,b}$,
where  $B_{z}$ denotes Mathieu's twisting functor.\end{theo}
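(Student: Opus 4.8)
The plan is to make the action of $B_{z}$ completely explicit and then exhibit a concrete isomorphism. Recall that $B_{z}$ is built from the Ore localization $U(\g)_{(e)}$ of $U(\g)$ at the powers of the root vector $e$ (which exists because $\operatorname{ad}(e)$ is locally nilpotent on $U(\g)$): on $B_{z}M$ an element $u\in U(\g)$ acts through the algebra automorphism
\[
\theta_{z}(u)=\sum_{i\ge 0}\binom{z}{i}(\operatorname{ad} e)^{i}(u)\,e^{-i}
\]
of $U(\g)_{(e)}$. First I would evaluate $\theta_{z}$ on the six generators; since $(\operatorname{ad} e)^{3}=0$ on $\g$, each value is a short finite sum. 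One obtains $\theta_{z}(e)=e$, $\theta_{z}(\overline{e})=\overline{e}$, $\theta_{z}(h)=h-2z$ and $\theta_{z}(\overline{h})=\overline{h}-2z\,\overline{e}\,e^{-1}$, together with $\theta_{z}(f)=f+z\,h\,e^{-1}-z(z-1)e^{-1}$ and $\theta_{z}(\overline{f})=\overline{f}+z\,\overline{h}\,e^{-1}-z(z-1)\,\overline{e}\,e^{-2}$. In particular $\theta_{z}(h)=h-2z$ already accounts for the expected shift: the $h$-support of $B_{z}M_{\al,\b}^{\lambda,a,b}$ is $(\al-2z)+2\Z$, which is exactly that of $M_{\al-2z,\b}^{\lambda,a,b}$.

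Next I would record, from Proposition \ref{defi3.1}, the explicit action of $e,h,f,\overline{e},\overline{h},\overline{f}$ on $M_{\al,\b}^{\lambda,a,b}$, noting in particular that $e$ acts bijectively, so that $M_{\al,\b}^{\lambda,a,b}$ coincides with its own localization at $e$ and $B_{z}M_{\al,\b}^{\lambda,a,b}$ has the same underlying space. I would then define a linear map $\varphi\colon B_{z}M_{\al,\b}^{\lambda,a,b}\to M_{\al-2z,\b}^{\lambda,a,b}$ which carries the $h$-weight space of weight $(\al-2z)+2k$ in $B_{z}M_{\al,\b}^{\lambda,a,b}$ onto the weight space of the same weight in $M_{\al-2z,\b}^{\lambda,a,b}$, the normalization being pinned down by compatibility with the classical triple $e,h,f$. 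For this triple the verification is the standard $\mathfrak{sl}_{2}$ computation underlying $B_{z}M_{\al}\cong M_{\al-2z}$, so $\varphi$ is a bijection intertwining $e,h,f$.

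The crux is to check that $\varphi$ also intertwines the barred generators. Here the extra terms created by $\theta_{z}$, namely the $\overline{e}\,e^{-1}$ in $\theta_{z}(\overline{h})$ and the $\overline{h}\,e^{-1},\overline{e}\,e^{-2}$ in $\theta_{z}(\overline{f})$, must be shown to recombine, on $M_{\al,\b}^{\lambda,a,b}$, into precisely the $\overline{e},\overline{h},\overline{f}$-action of $M_{\al-2z,\b}^{\lambda,a,b}$ with the parameters $\b,a,b$ unchanged. This is where the explicit formulas of Proposition \ref{defi3.1} are indispensable: substituting them into the twisted operators and comparing weight space by weight space, one verifies that the correction terms affect only the $h$-dependence (shifting $\al$ to $\al-2z$) while leaving the coefficients governed by $\b,a,b$ intact. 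I expect the bookkeeping for $\overline{f}$, which carries the most correction terms, to be the main obstacle. Finally, as a sanity check, for $z\in\Z$ one has $\al-2z\equiv\al\pmod{2\Z}$, so the resulting isomorphism is consistent with the classification in Theorem \ref{vztheo4.1}(iv); alternatively, if the construction of $M_{\al,\b}^{\lambda,a,b}$ in Proposition \ref{defi3.1} is itself realized by a twist whose parameter encodes $\al$, the statement follows at once from the functorial identity $B_{z}\circ B_{z'}\cong B_{z+z'}$, with the explicit computation above serving as the robust fallback.
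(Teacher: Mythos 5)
Your plan fails at its load-bearing step: you localize at the wrong element. In this paper $B_{z}$ is, by construction (Section~4.1.2), the twisting functor attached to the multiplicative set $\left\{\overline{e}^{\,i}:i\in\N\right\}$, i.e.\ localization at $\overline{e}=e\otimes t$, not at $e$. That choice is exactly what makes the theorem true and its proof short: by (\ref{vz4.11}) one has $\overline{e}\cdot\eta_{\al_{k},\b^{s}}=-\lambda\eta_{\al_{k-1},\b^{s}}$, so $\overline{e}$ permutes the basis up to scalars and acts \emph{bijectively}; hence $M_{\al,\b}^{\lambda,a,b}$ is already a $U^{\left(\overline{e}\right)}$-module, $B_{z}M_{\al,\b}^{\lambda,a,b}$ has the same underlying space, the twist $\Theta_{z}$ only disturbs two generators ($\Theta_{z}(h)=h+2z$ and $\Theta_{z}(f)=f-z\overline{e}^{-1}\overline{h}$, Lemma \ref{vzlemma4.4}, using $(\mathrm{ad}\,\overline{e})^{2}=0$ on $\g$), and the index-shift map $1\otimes\eta_{\al_{k},\b^{s}}\mapsto\eta_{\al_{k-z},\b^{s}}$ is verified to intertwine all six generators via Proposition \ref{defi3.1}. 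By contrast, your assertion that $e$ acts bijectively on $M_{\al,\b}^{\lambda,a,b}$ is false: $e\cdot\eta_{\al_{k},\b^{s}}=2\lambda\eta_{\al_{k-1},\b^{s+1}}$ is injective but its image misses every vector $\eta_{\al_{k},\b}$ (the $s=1$ layer), so the module is not closed under $e$-localization and your map $\varphi$ between ``the same underlying space'' does not exist.

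This is not a repairable bookkeeping issue but an obstruction: on your $B_{z}^{(e)}M_{\al,\b}^{\lambda,a,b}$ the element $e$ acts bijectively (since $\theta_{z}(e)=e$ and localization inverts $e$; each weight space acquires a basis indexed by $\Z$ rather than $\N$), whereas $e$ is not surjective on $M_{\al-2z,\b}^{\lambda,a,b}$; any $U(\g)$-isomorphism would transport bijectivity of $e$, so $B_{z}^{(e)}M_{\al,\b}^{\lambda,a,b}\not\cong M_{\al',\b}^{\lambda,a,b}$ for \emph{any} $\al'$. Two secondary slips: since the $h$-eigenvalue of $\eta_{\al_{k},\b^{s}}$ is $-\al_{k}$ (see (\ref{vz14.4})), your convention $\theta_{z}(h)=h-2z$ would shift the support toward $M_{\al+2z,\b}^{\lambda,a,b}$, not $M_{\al-2z,\b}^{\lambda,a,b}$, so even after fixing the localization you would need $z\mapsto-z$ or the paper's conjugation $\Theta_{z}(u)=\overline{e}^{-z}u\,\overline{e}^{\,z}$; and the proposed shortcut via $B_{z}\circ B_{z'}\cong B_{z+z'}$ presupposes knowing how $B_{z}$ acts on this family, which is the content of the theorem. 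The repair is to rerun your explicit-generator strategy with $\overline{e}$ in place of $e$: then the twisted generators are exactly the paper's, the correction term $-z\overline{e}^{-1}\overline{h}$ in $\Theta_{z}(f)$ produces precisely the $\al\mapsto\al-2z$ shift in (\ref{vz14.6}), and your index-shift isomorphism becomes the paper's proof verbatim.
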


Finally, we investigate the connection between these three classes of weight modules
we constructed and obtain the following, a bit surprising, result.

\begin{theo}\label{mzh4}Let $M_{\al,\b}^{\lambda,a,b}$, $N_{\al,\b}^{\lambda,a,b}$ and $V_{\al,\b}^{\lambda,a,\beta_{1}\left(\overline{h}\right)}$ be the $\g$-modules defined in Propositions \ref{defi3.1}, \ref{defi3.21} and \ref{prop4.9}, respectively, where $\lambda\in \C^{\times}$, $\al,\b,a,b\in \C$ and $\b_{1}\left(\overline{h}\right) \in \C\left[\,\overline{h}\,\right]$. Then we have
\begin{itemize}
  \item[\rm(i)] If $\b^{2}+a\neq0$, then $N_{\al,\b}^{\lambda,a,b}\cong M_{\al,\b}^{\lambda,a,b}$.
  \item[\rm(ii)] If $\b+a\neq 0$, then $V_{\al,\b}^{\lambda,a,\beta_{1}\left(\overline{h}\right)}\cong M_{\al,\b}^{\lambda,-a^{2},b}$, where  $b=-2 a\left(\lambda\b_{1}(a)+1\right)$.
\end{itemize} \end{theo}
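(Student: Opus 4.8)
The key conceptual point is that, by Theorem \ref{prop-3.2a}, the source modules $\Gamma(\lambda,a,b)$ and $\Theta(\lambda,a,b)$ are pairwise non-isomorphic as $U\left(\overline{\mathfrak{h}}\right)$-free modules; consequently the asserted isomorphisms cannot arise from an isomorphism of the underlying rank-one modules and must instead be established directly at the level of the weight modules produced by vector-space duality and Mathieu's twisting. The plan is therefore to write down explicit bases and generator actions for each module and to construct an explicit intertwining bijection. By Propositions \ref{defi3.1}, \ref{defi3.21} and \ref{prop4.9}, all three of $M_{\al,\b}^{\lambda,a,b}$, $N_{\al,\b}^{\lambda,a,b}$ and $V_{\al,\b}^{\lambda,a,\beta_1(\overline{h})}$ are weight modules supported on the single coset $\al+2\Z$, with each weight space a free module of rank one over $\C\left[\,\overline{h}\,\right]$. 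Fixing such a $\C\left[\,\overline{h}\,\right]$-basis $\{v_k\}_{k\in\Z}$ on one side and $\{w_k\}_{k\in\Z}$ on the other, indexed by the weights $\al_k$, I would first record how $e,f,h,\overline{e},\overline{f},\overline{h}$ act on these bases.

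For part (i), assume $\b^2+a\neq0$. I would define a candidate $\g$-module map $\varphi\colon M_{\al,\b}^{\lambda,a,b}\to N_{\al,\b}^{\lambda,a,b}$ weight space by weight space, sending $v_k$ to a $\C\left[\,\overline{h}\,\right]$-multiple of $w_k$, and then check the intertwining relations on each generator. The scalar $\b^2+a$ enters as the factor governing the composite action of the raising and lowering operators (a Casimir-type quantity for the $\mathfrak{sl}_2$-part), and its non-vanishing is precisely what makes the linking coefficients between the two bases invertible, so that $\varphi$ is bijective on every weight space. Since, under $\b^2+a\neq0$, Theorem \ref{vztheo4.1}(i) together with the companion simplicity statement in Theorem \ref{vztheo4.2} guarantees that both $M_{\al,\b}^{\lambda,a,b}$ and $N_{\al,\b}^{\lambda,a,b}$ are simple, it in fact suffices to produce a single nonzero homomorphism: injectivity then follows from simplicity of the source and surjectivity from simplicity of the target.

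For part (ii), assume $\b+a\neq0$ and set $b=-2a(\lambda\beta_1(a)+1)$. The argument is parallel but the bookkeeping is heavier because of the polynomial datum $\beta_1(\overline{h})$ attached to $\Omega\left(\lambda,b,\beta_1(\overline{h})\right)$. Comparing the action of $\overline{h}$ on $V_{\al,\b}^{\lambda,a,\beta_1(\overline{h})}$ with its action on the $\Gamma$-type module $M_{\al,\b}^{\lambda,-a^2,b}$ shows that the relevant eigenvalue forces $\beta_1$ to be evaluated at $\overline{h}=a$, and it is exactly this evaluation that converts the polynomial $\beta_1$ into the scalar $b=-2a(\lambda\beta_1(a)+1)$; the replacement of the parameter $a$ by $-a^2$ likewise reflects the quadratic nature of the matching. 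I would then verify that, after these substitutions, the actions of all six generators on corresponding basis vectors coincide, giving an explicit isomorphism; the condition $\b+a\neq0$ is the analogue of $\b^2+a\neq0$ in this family, ensuring that the relevant linking coefficients do not vanish.

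The main obstacle will be the verification for the barred generators $\overline{e}$ and $\overline{f}$. While $h$ and $\overline{h}$ act essentially diagonally and $e,f$ act as clean weight shifts, the operators $\overline{e},\overline{f}$ couple the weight shift to multiplication and differentiation in the variable $\overline{h}$, so the intertwining conditions become identities between polynomials in $\overline{h}$ whose coefficients depend nonlinearly on $\lambda,a,b,\b$ (and on $\beta_1$ in case (ii)). Confirming that these identities hold, and that the genericity hypotheses $\b^2+a\neq0$ and $\b+a\neq0$ are exactly the conditions under which the matching coefficients are nonzero, so that the constructed map is a genuine bijection rather than merely nonzero, is the technical heart of the proof. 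In particular, the precise parameter substitution in (ii) is not guessed in advance but must be read off from these coefficient comparisons.
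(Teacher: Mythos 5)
There is a genuine gap at the heart of your construction. The weight spaces here are \emph{not} free rank-one $\C\left[\,\overline{h}\,\right]$-modules: since $\left(\overline{h}+\b\right)\cdot\eta_{\al_{k},\b^{s}}=-(s-1)\eta_{\al_{k},\b^{s-1}}$, the operator $\overline{h}+\b$ acts locally nilpotently on $M_{\al,\b}^{k}=\mbox{span}\left\{\eta_{\al_{k},\b^{s}}\,|\,s\in\N\right\}$, so each weight space is a torsion $\C\left[\,\overline{h}\,\right]$-module that is not even cyclic. Consequently, sending a vector $v_{k}$ to a $\C\left[\,\overline{h}\,\right]$-multiple of $w_{k}$ neither determines a map on the whole weight space nor can it intertwine the actions: already for $\overline{e}$ and $s\geq 2$, in $M_{\al,\b}^{\lambda,a,b}$ one has $\overline{e}\cdot\eta_{\al_{k},\b^{s}}=-\lambda\eta_{\al_{k-1},\b^{s}}$, while in $N_{\al,\b}^{\lambda,a,b}$ the same generator produces the extra terms $\frac{(s-2)(s-1)}{4\lambda}\eta_{\al_{k-1},\b^{s-2}}+\frac{(s-1)\b}{2\lambda}\eta_{\al_{k-1},\b^{s-1}}$, and multiplying by a polynomial in $\overline{h}$ only moves you \emph{down} the $s$-filtration in a way that cannot absorb these cross terms, so no such ``diagonal'' intertwiner exists. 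The actual isomorphism must be strictly triangular in $s$, and the paper builds it by transporting the $s=1$ layer through powers of $e$: for (ii) it sets $\varphi'\left(\eta_{\al_{k},\b^{s}}\right)=\frac{c^{k+s-1}}{(2\lambda)^{s-1}}e^{s-1}\cdot\eta_{\al_{k+s-1},\b}$ with $c=\frac{2}{\b+a}$. Bijectivity is exactly where $\b+a\neq0$ enters (in $V_{\al,\b}^{\lambda,a,\beta_{1}\left(\overline{h}\right)}$ the leading coefficient of $e$ on $\eta_{\al_{k-1},\b^{s+1}}$ is $\lambda(\b+a)$, so $e^{s-1}\cdot\eta_{\al_{k+s-1},\b}$ is a nonzero multiple of $\eta_{\al_{k},\b^{s}}$ plus lower-$s$ terms), and the generator checks reduce to commutator identities such as $\overline{f}e^{s-1}=e^{s-1}\overline{f}-(s-1)e^{s-2}\overline{h}-(s-1)(s-2)e^{s-3}\overline{e}$ inside $U(\g)$. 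Your simplicity shortcut for (i) is sound in principle (a nonzero homomorphism between simple modules is an isomorphism), but it does not excuse you from exhibiting \emph{some} nonzero homomorphism, and the ansatz you propose produces none; the same $e$-power construction, with $\b^{2}+a\neq0$ playing the role of $\b+a\neq0$, is what works.

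The mechanism you propose for the parameter matching in (ii) is also incorrect. The action of $\overline{h}$ is given by the identical formula $-\b\eta_{\al_{k},\b^{s}}-(s-1)\eta_{\al_{k},\b^{s-1}}$ on both $V_{\al,\b}^{\lambda,a,\beta_{1}\left(\overline{h}\right)}$ and $M_{\al,\b}^{\lambda,-a^{2},b}$, so comparing $\overline{h}$-actions yields no information, and in Proposition \ref{prop4.9} the polynomials are evaluated at $\b$, not at $a$: the quantities $\al_{1}(\b)$ and $\b_{1}(\b)$ appear in the $e$- and $f$-actions. The scalar $b=-2a\left(\lambda\b_{1}(a)+1\right)$ emerges only at the end of the $f$-verification, where the linear constraint (\ref{1}) relating the coefficients of $\al_{1}$ and $\b_{1}$ collapses the quantity $P_{a}^{\b}=\frac{\b-a}{\lambda}\al_{1}\left(\b\right)-\lambda(\b+a)\b_{1}\left(\b\right)-2a$ to the constant $b$; without invoking (\ref{1}) at precisely this step, the coefficient comparison you describe cannot close, so the ``reading off'' of the substitution $a\mapsto-a^{2}$, $b=-2a\left(\lambda\b_{1}(a)+1\right)$ would not succeed along the route you sketch.
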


This paper is organized as follows. In Section 2, we recall some necessary definitions
and preliminary results about $U(\mathfrak{h})$-free modules over the Lie algebra
$\mathfrak{sl}_{2}$. In Section 3, we start with an explicit construction of
$U\left(\overline{\mathfrak{h}}\right)$-free modules of rank one over Takiff $\mathfrak{sl}_{2}$ and
then prove Theorems \ref{prop-3.2bc} and \ref{prop-3.2a}.
In Section 4, we  start by proving Theorem \ref{vztheo4.1}. Then we study the
weight modules associate to the non-weight modules $\Theta(\lambda,a,b)$ and
$\Omega\left(\lambda,b,\beta_{1}\left(\overline{h}\right)\right)$, respectively,
see Theorems \ref{vztheo4.2} and \ref{theo4.7}.
Finally, we prove Theorem \ref{mzh4}.

Throughout this paper, we denote by $\C$, $\C^{\times}$, $\Z$, $\Z_{+}$  and $\N$ the
sets of complex numbers, nonzero complex numbers, integers, non-negative integers and positive integers, respectively.

\section{Preliminaries}
We begin by briefly introducing our conventions. In this paper, all vector spaces (resp. algebras, modules) are defined over $\C$. We denote by $V^{*}$ the dual space of a vector space $V$. Denote by $\C[s,t]$ the polynomial algebra in variables $s$ and $t$ with coefficients in $\C$.

Consider the Lie algebra $\mathfrak{sl}_{2}$ with the standard basis $\{e, h, f\}$ and the Lie bracket
\begin{equation*}
  [e,f]=h,\ \ \ [h,e]=2e,\ \ \ [h,f]=-2f.
\end{equation*}
Let $D:=\C[t]/(t^{2})$ be the algebra of dual numbers. Consider the associated {\it Takiff Lie algebra} $\mathbf{g}:=\mathfrak{sl}_{2}\otimes_{\C}D$ with the Lie bracket
\begin{equation*}
  [a\otimes t^{i}, b\otimes t^{j}]=[a, b]\otimes t^{i+j},
\end{equation*}
where $a, b\in \mathfrak{sl}_{2}$ and $i, j\in \left\{0,1\right\}$, and the Lie bracket on the right hand side being the usual $\mathfrak{sl}_{2}$-Lie bracket. Set
\begin{equation*}
  \overline{e}:=e\otimes t, \ \ \overline{f}:=f\otimes t, \ \ \overline{h}:=h\otimes t.
\end{equation*}
Let $\overline{\mathfrak{n}}_{+}$ be the subalgebra of $\g$ generated by $e, \overline{e}$. Let $\overline{\mathfrak{h}}$ be the subalgebra of $\g$ generated by $h, \overline{h}$, and,  finally,  let $\overline{\mathfrak{n}}_{-}$ be the subalgebra of $\g$ generated by $f, \overline{f}$. Then we have the following triangular decomposition of $\g:$
\begin{equation*}
  \g=\overline{\mathfrak{n}}_{+}\oplus \overline{\mathfrak{h}}\oplus \overline{\mathfrak{n}}_{-}.
\end{equation*}

For a Lie algebra $L$, we denote by $U(L)$ the corresponding universal enveloping algebra. Denote by $U(\g)$-Mod the category of all $U(\g)$-modules.
Let $M\in U(\g)$-Mod, then $M^{*}\in U(\g)$-Mod with the action is defined as follows:
\begin{equation*}
  \left(y\cdot \varphi\right)(v)=-\varphi(y\cdot v) \ \ \ \mbox{for}\ \, y\in U(\g),\, \varphi\in M^{*}, \,v\in M.
\end{equation*}
 Let $\gamma\left(h,\overline{h}\right)\in \C\left[h,\overline{h}\,\right]$ and write $\gamma\left(h,\overline{h}\right)=\sum _{i=0}^{m}\sum_{j=0}^{n}c_{i,j}h^{i}\overline{h}^{j}$ with $m,n\in \Z_{+}$ and $c_{i,j}\in \C$. 
 We define the degree of $h$ and $\overline{h}$ of $\gamma\left(h,\overline{h}\right)$ by $m$ and $n$, respectively. Denote by $\mbox{deg}_{y}\gamma\left(h,\overline{h}\right)$ the degree of $y$ of $\gamma\left(h,\overline{h}\right)$ for $y\in \left\{h,\overline{h}\right\}$.
\begin{defi}{\rm A $\g$-module $M$ is called a} {\it generalized weight module} {\rm provided that} $M=\sum_{\mu\in \overline{\mathfrak{h}}^{*}}M^{\mu}$, where
\begin{equation*}
  M^{\mu}=\left\{ v\in M\,|\, \left(h-\mu(h)\right)^{k}\cdot v=0, \left(\overline{h}-\mu\left(\overline{h}\right)\right)^{s}\cdot v=0,\ \ \mbox{for\ some}\ k,s\in \N\right\}.
\end{equation*}
{\rm The subspace $M^{\mu}$ is called the} generalized weight space {\rm of $M$ associated to $\mu$.}
\end{defi}
\begin{defi}{\rm A $\g$-module $M$ is called a} {\it weight module} {\rm provided that} $M=\sum_{\lambda\in \C} M_{\lambda}$, where
\begin{equation*}
  M_{\lambda}=\left\{ v\in M\,|\, h\cdot v=\lambda v\right\}.
\end{equation*}
{\rm The subspace $M_{\lambda}$ is called the} weight space {\rm of $M$ corresponding to the weight $\lambda$.}
\end{defi}
\begin{defi}{\rm A weight $\g$-module $M$ is called a} {\it highest} {\rm (resp. {\it lowest})} {\it weight module} {\rm with {\it highest} (resp. {\it lowest}) {\it weight} $\lambda\in \C$, if there exists a nonzero weight vector $v\in M_{\lambda}$ such that $M$ is generated by $v$ as a $\g$-module and $\overline{\mathfrak{n}}_{+}\cdot v=0$ (resp. $\overline{\mathfrak{n}}_{-}\cdot v=0$)}.
\end{defi}
\begin{theo}\label{theo1.1}\rm{(see [\ref{CC}, \ref{N}]).} {\it Any $U(\mathfrak{sl}_{2})$-module $M$ such that the restriction of $U(\mathfrak{sl}_{2})$ to $U(\C h)$ is free of rank 1 is isomorphic to one of the modules $$\Delta_{1}(\lambda,a),\,\Delta_{2}(\lambda,a),\,\Delta_{3}(\lambda,a),$$ for some $\lambda\in \C^{\times}$ and $a\in \C$, whose module structures are given as follows}
\begin{align*}
 \Delta_{1}(\lambda,a):\ e\cdot g(h) &=-\frac{1}{\lambda}\left(\frac{h}{2}-a\right)g(h-2), \ \ h\cdot g(h)=hg(h),\\
 f\cdot g(h) &=\lambda\left(\frac{h}{2}+a\right)g(h+2),\\
 \Delta_{2}(\lambda,a):\ e\cdot g(h)&=\lambda g(h-2),\ \ \ h\cdot g(h)=hg(h),  \\
 f\cdot g(h) &=-\frac{1}{\lambda}\left(\frac{h}{2}-a\right)\left(\frac{h}{2}+a+1\right)g(h+2),\\
 \Delta_{3}(\lambda,a):\ e\cdot g(h) &= -\frac{1}{\lambda}\left(\frac{h}{2}+a\right)\left(\frac{h}{2}-a-1\right)g(h-2),\\
   h\cdot g(h)&=hg(h), \ \ \ f\cdot g(h) =\lambda g(h+2),
\end{align*}
where $g(h)\in \C[h]$. Moreover, $\Delta_{1}(\lambda,a)$ is simple if and only if $2a\notin \Z_{+}$, $\Delta_{2}(\lambda,a)$ and $\Delta_{3}(\lambda,a)$ are simple for all $\lambda\in \C^{\times}$ and $a\in \C$.
\end{theo}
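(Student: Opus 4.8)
The plan is to turn the freeness hypothesis into a concrete normal form and then read off the three families and their simplicity from polynomial identities. Since $M$ is free of rank one over $U(\C h)=\C[h]$, I would fix a generator and identify $M=\C[h]$ as a $\C[h]$-module, so that $h$ acts by multiplication, $h\cdot g(h)=hg(h)$. Everything then reduces to determining the two $\C$-linear operators on $\C[h]$ by which $e$ and $f$ act, subject to the $\mathfrak{sl}_2$-relations; note that freeness is now built in and plays no further role.

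First I would use the two Cartan relations to fix the shape of these operators. Writing $E(g):=e\cdot g$, the relation $[h,e]=2e$ becomes $E(hg)=(h-2)E(g)$ on $\C[h]$; plugging in $g=h^n$ and inducting gives $E(h^n)=(h-2)^nE(1)$, hence $e\cdot g(h)=\psi(h)\,g(h-2)$ with $\psi:=e\cdot 1$. The symmetric computation from $[h,f]=-2f$ gives $f\cdot g(h)=\phi(h)\,g(h+2)$ with $\phi:=f\cdot 1$. Thus every module in question is a twisted-shift realization on $\C[h]$ determined by the single pair $(\psi,\phi)\in\C[h]^2$.

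Next I would impose $[e,f]=h$. Composing the two twisted shifts collapses the shifts and yields $(ef-fe)\cdot g=\bigl(\psi(h)\phi(h-2)-\phi(h)\psi(h+2)\bigr)g(h)$, so the relation is equivalent to the functional equation
\[
\psi(h)\phi(h-2)-\phi(h)\psi(h+2)=h.
\]
Both $\psi,\phi$ must be nonzero (otherwise the left-hand side vanishes), and a short leading-term computation shows the top degree cancels while the coefficient of $h^{\deg\psi+\deg\phi-1}$ equals $-2(\deg\psi+\deg\phi)$ times the product of the leading coefficients; comparison with $h$ forces $\deg\psi+\deg\phi=2$. This leaves exactly the three cases $(\deg\psi,\deg\phi)\in\{(1,1),(0,2),(2,0)\}$. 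In each case I would substitute the generic polynomials of the prescribed degrees, match coefficients, and solve; repackaging the free scalar and the free constant as $\lambda\in\C^{\times}$ and $a\in\C$ gives precisely $\Delta_1(\lambda,a)$, $\Delta_2(\lambda,a)$, $\Delta_3(\lambda,a)$, and conversely these formulas automatically satisfy the relations since they were extracted from them.

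For simplicity I would observe that any submodule $N$ is stable under multiplication by $h$, hence an ideal $N=(d(h))$; stability under $e$ and $f$ then becomes the divisibility conditions $d(h)\mid\psi(h)d(h-2)$ and $d(h)\mid\phi(h)d(h+2)$. In the cases $\Delta_2,\Delta_3$ one of $\psi,\phi$ is a nonzero constant, so one condition reads $d(h)\mid d(h\mp 2)$ between monic polynomials of equal degree, forcing $d(h)=d(h\mp 2)$ and hence $d$ constant; thus $\Delta_2,\Delta_3$ are simple for all $(\lambda,a)$. For $\Delta_1$, where $\psi,\phi$ are linear with roots $2a$ and $-2a$, I would analyze the two divisibilities through the multiplicity function of the roots of $d$ along each coset of $2\Z$: they force this function to be monotone in opposite directions except for a single admissible jump located at a root of $\psi$ or $\phi$, so a nonzero finitely supported $d$ exists if and only if the two jumps can be joined into a finite chain, which occurs exactly when $2a\in\Z_{+}$. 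This reproduces the criterion and pins down the unique proper submodule. The main obstacle is precisely this last combinatorial step: bounding the degrees and solving the functional equation are routine, whereas the root-multiplicity analysis must be done carefully to confirm that the chain closes up on exactly the asserted set of parameters and nowhere else.
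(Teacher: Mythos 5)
The paper itself offers no proof of this theorem --- it is imported from [CC, N] with a citation --- so your proposal can only be measured against the standard argument, which it essentially reproduces: the normal form $e\cdot g(h)=\psi(h)g(h-2)$, $f\cdot g(h)=\phi(h)g(h+2)$ extracted from the Cartan relations, the functional equation $\psi(h)\phi(h-2)-\phi(h)\psi(h+2)=h$, the leading-coefficient count forcing $\deg\psi+\deg\phi=2$ (your coefficient $-2(\deg\psi+\deg\phi)pq$ is correct), the three cases $(1,1)$, $(0,2)$, $(2,0)$ matching $\Delta_1,\Delta_2,\Delta_3$, and, for simplicity, the identification of submodules with ideals $(d(h))$ subject to $d(h)\mid\psi(h)d(h-2)$ and $d(h)\mid\phi(h)d(h+2)$. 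All of this is sound, and your treatment of $\Delta_2$ and $\Delta_3$ (one of $\psi,\phi$ constant forces $d(h)=d(h\mp2)$, hence $d$ constant) is complete.

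The gap is in the one step you flagged as delicate, the $\Delta_1$ chain condition, and there your asserted conclusion does not follow from your own (correct) machinery. With $\psi(h)=-\frac{1}{2\lambda}(h-2a)$ and $\phi(h)=\frac{\lambda}{2}(h+2a)$, let $\mu(r)$ denote the multiplicity of $r$ as a root of $d$. At the minimal support point $r_-$ of $d$ in a coset of $2\Z$, the first divisibility gives $1\le\mu(r_-)\le\mu(r_--2)+\mathrm{ord}_{r_-}(\psi)=\mathrm{ord}_{r_-}(\psi)$, forcing $r_-=2a$ with $\mu(2a)=1$; symmetrically the maximal support point must be $r_+=-2a$. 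A nonconstant invariant $d$ therefore exists iff $-2a=2a+2k$ for some $k\in\Z_+$, i.e.\ iff $-2a\in\Z_+$, in which case $d=(h-2a)(h-2a-2)\cdots(h+2a)$ works and the quotient is the finite-dimensional simple module with weights $2a,2a+2,\dots,-2a$ --- not iff $2a\in\Z_+$. Concretely, for $a=-\tfrac12$ the ideal $\left((h-1)(h+1)\right)$ is a proper submodule of $\Delta_1(\lambda,-\tfrac12)$ even though $2a=-1\notin\Z_+$, while $\Delta_1(\lambda,\tfrac12)$ is simple even though $2a=1\in\Z_+$. So either you flipped a sign when closing the chain, or you calibrated your answer to the theorem as printed --- which itself carries a sign error relative to its own displayed formulas (the criterion $2a\notin\Z_+$ belongs to the convention $a\mapsto-a$ used in the cited sources): for the $\Delta_1(\lambda,a)$ displayed here, simplicity holds iff $-2a\notin\Z_+$, equivalently $2a\notin\{0,-1,-2,\dots\}$. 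The repair costs one line in your argument; everything else stands.
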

\begin{lemm}\label{lemma1.1}For any $\gamma\left(h,\overline{h}\right)\in \C\left[\,h,\overline{h}\,\right]$, we have
\begin{align}
   e\cdot \gamma\left(h,\overline{h}\right)&=\gamma\left(h-2,\overline{h}\right)e\cdot 1-2\overline{\ptl}\left(\gamma\left(h-2,\overline{h}\right)\right)\overline{e}\cdot 1, \label{vze1}\\
  f\cdot \gamma\left(h,\overline{h}\right)&=\gamma\left(h+2,\overline{h}\right)f\cdot 1+2\overline{\ptl}\left(\gamma\left(h+2,\overline{h}\right)\right)\overline{f}\cdot 1,\label{vze2}\\
  \overline{e}\cdot \gamma\left(h,\overline{h}\right)&= \gamma\left(h-2,\overline{h}\right)\overline{e}\cdot 1,\label{vze3}\\
  \overline{f}\cdot \gamma\left(h,\overline{h}\right)&= \gamma\left(h+2,\overline{h}\right)\overline{f}\cdot 1,\label{vze4}
\end{align} where $\overline{\ptl}:=\frac{\ptl}{\ptl \overline{h}}$ is the partial derivative
with respect to $\overline{h}$ on $\C\left[\,h,\overline{h}\,\right]$.
\end{lemm}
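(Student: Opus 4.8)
The plan is to prove each identity as a relation in $U(\g)$ and then evaluate it on the cyclic generator $1$; since every module element has the form $\gamma\left(h,\overline{h}\right)\cdot 1$, the four displayed formulas follow once we understand how $e,f,\overline{e},\overline{f}$ commute past an arbitrary polynomial in $h$ and $\overline{h}$ inside $U(\g)$. First I would record the commutation relations coming directly from the Takiff bracket: from $[h,e]=2e$ and $[\overline{h},e]=2\overline{e}$ one gets $eh=(h-2)e$ and $e\overline{h}=\overline{h}e-2\overline{e}$, while $[\overline{h},\overline{e}]=0$ and $[h,\overline{e}]=2\overline{e}$ give $\overline{e}h=(h-2)\overline{e}$ and $\overline{e}\overline{h}=\overline{h}\overline{e}$. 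The relations for $f$ and $\overline{f}$ are identical after replacing the shift $h\mapsto h-2$ by $h\mapsto h+2$ and changing the sign of the correction term, so I would only treat $e,\overline{e}$ in detail and invoke symmetry for $f,\overline{f}$.

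Next I would dispose of the two ``barred'' identities \eqref{vze3} and \eqref{vze4}, which are the easy ones. Since $\overline{e}$ merely shifts $h\mapsto h-2$ and commutes with $\overline{h}$, a one-line induction on the number of factors of a monomial $h^{i}\overline{h}^{j}$ shows $\overline{e}\,\gamma\left(h,\overline{h}\right)=\gamma\left(h-2,\overline{h}\right)\overline{e}$ in $U(\g)$; evaluating at $1$ gives \eqref{vze3}, and \eqref{vze4} is symmetric.

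For the main identities \eqref{vze1} and \eqref{vze2} I would argue by induction on the degree of a monomial, reducing to the generators $h$ and $\overline{h}$ by linearity. The base cases are immediate: $e\cdot h=(h-2)\,e\cdot 1$ carries no $\overline{e}$ term, consistent with $\overline{\ptl}h=0$, while $e\cdot\overline{h}=\overline{h}\,e\cdot 1-2\,\overline{e}\cdot 1$ is consistent with $\overline{\ptl}\,\overline{h}=1$. For the inductive step I would move $e$ past one factor at a time, using the already-established rule $\overline{e}\,r=\sigma(r)\overline{e}$ for the resulting $\overline{e}$ terms, where $\sigma$ denotes the automorphism $h\mapsto h-2$. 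Writing $\gamma=qr$ and combining $e\,q=\sigma(q)e-2\,\sigma\!\left(\overline{\ptl}q\right)\overline{e}$ with the corresponding formula for $r$, the two correction terms assemble, via the product rule $\overline{\ptl}(qr)=q\,\overline{\ptl}r+r\,\overline{\ptl}q$ and the multiplicativity of $\sigma$, into exactly $-2\,\sigma\!\left(\overline{\ptl}(qr)\right)\overline{e}$, which is the formula for $qr$. Evaluating the resulting $U(\g)$-identity at $1$ yields \eqref{vze1}, and \eqref{vze2} follows by the symmetric computation with the shift $h\mapsto h+2$.

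The one genuinely conceptual point, and the place where the bookkeeping could go wrong, is seeing why the correction term is governed by $\overline{\ptl}=\partial/\partial\overline{h}$ rather than by any differentiation in $h$. The reason is the asymmetry of the relations: commuting $e$ past $h$ produces only $-2e$, which merely feeds the shift $h\mapsto h-2$ and is reabsorbed, whereas commuting $e$ past $\overline{h}$ produces the genuinely new generator $-2\overline{e}$, which cannot be reabsorbed and instead contributes one such term for each $\overline{h}$-factor; this counting is precisely $\overline{\ptl}$. Keeping track of these $\overline{e}$- (resp. $\overline{f}$-) terms consistently through the induction is the main thing to verify, and everything else is routine.
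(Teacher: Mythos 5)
Your proposal is correct and follows essentially the same route as the paper: both establish the commutation formulas in $U(\g)$ by induction from the basic bracket relations (the paper's Claim 1 proves $eh^{i}=(h-2)^{i}e$ and $e\overline{h}^{i}=\overline{h}^{i}e-2i\overline{h}^{i-1}\overline{e}$, etc., by induction on $i$) and then evaluate on the generator $1$. Your Leibniz-rule induction on a product $\gamma=qr$ is just a mildly more structural packaging of the paper's monomial induction, and your identification of $\overline{\ptl}$ as counting the unabsorbable $\overline{e}$-terms is exactly the mechanism at work there.
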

\begin{proof}Before we give the proof, we first give the following claim.
\begin{clai}\label{vzcm1}For any $i\in \Z_{+}$, we have \begin{align}
              eh^{i} &=(h-2)^{i}e,\ \ \ \ \ \,e\overline{h}^{i}=\overline{h}^{i}e-2i\overline{h}^{i-1}\overline{e},  \label{vz2.1}\\
              fh^{i} &=(h+2)^{i}f,\ \ \ \ \ f\overline{h}^{i}=\overline{h}^{i}f+2i\overline{h}^{i-1}\overline{f},  \label{vz2.2}\\
              \overline{e}h^{i} &=(h-2)^{i}\overline{e},\ \ \ \ \ \,\overline{e}\,\overline{h}^{i}=\overline{h}^{i}\,\overline{e}, \label{vz2.3}\\
              \overline{f}h^{i} &=(h+2)^{i}\overline{f},\ \ \ \ \  \overline{f}\,\overline{h}^{i}=\overline{h}^{i}\,\overline{f}. \label{vz2.4}
            \end{align}
\end{clai}
We prove this claim by induction on $i$. Noting that \begin{align*}
              eh &= \left[\,e,h\,\right]+he=\left(h-2\right)e, \ \ \ \
              e\overline{h} = \left[\,e,\overline{h}\,\right]+\overline{h}e=\overline{h}e-2\overline{e}, \\
              \overline{e}h &=\left[\,\overline{e},h\,\right]+h\overline{e}=\left(h-2\right)\overline{e},  \ \ \ \
              \overline{e}\,\overline{h} =\left[\,\overline{e},\overline{h}\,\right]+\overline{h}\,\overline{e}=\overline{h}\,\overline{e},
            \end{align*}
thus, (\ref{vz2.1}) and (\ref{vz2.3}) hold for $i=1$. Now suppose that (\ref{vz2.1}) and (\ref{vz2.3}) are true for $i$, then
\begin{align*}
  eh^{i+1} &= \left(h-2\right)^{i}eh=\left(h-2\right)^{i+1}e, \\
  e\overline{h}^{i+1} &= \left(\overline{h}^{i}e-2i\overline{h}^{i-1}\overline{e}\right)\overline{h}=\overline{h}^{i}\left(\overline{h}e-2\overline{e}\right)
  -2i\overline{h}^{i}\overline{e}=\overline{h}^{i+1}e-2(i+1)\overline{h}^{i}\overline{e}, \\
  \overline{e}h^{i+1} &= \left(h-2\right)^{i}\overline{e}h=\left(h-2\right)^{i}\left(h-2\right)\overline{e}=\left(h-2\right)^{i+1}\overline{e},\\
  \overline{e}\overline{h}^{i+1} &= \overline{h}^{i}\overline{e}\,\overline{h}=\overline{h}^{i+1}\overline{e}.
\end{align*}
Hence (\ref{vz2.1}) and (\ref{vz2.3}) are true for all $i$. Similarly, we can obtain (\ref{vz2.2}) and (\ref{vz2.4}). This completes the proof of Claim \ref{vzcm1}. Let $\gamma\left(h,\overline{h}\right)$ be any element in $\C\left[\,h,\overline{h}\,\right]$ and write $\gamma\left(h,\overline{h}\right)=\sum_{i,j}d_{i,j}h^{i}\overline{h}^{j}$ for some $d_{i,j}\in \C$. Then, by Claim \ref{vzcm1}, we get
\begin{align*}
  e\cdot \gamma\left(h,\overline{h}\right)  &=\SUM{i,j}{}d_{i,j}e\cdot h^{i}\overline{h}^{j}\overset{(\ref{vz2.1})}{=}\SUM{i,j}{}d_{i,j} \left(h-2\right)^{i}e\cdot\overline{h}^{j}
   = \gamma\left(h-2,\overline{h}\right)e\cdot 1-2\overline{\ptl}\left(\gamma\left(h-2,\overline{h}\right)\right)\overline{e}\cdot 1, \\
   \overline{e}\cdot \gamma\left(h,\overline{h}\right)&= \SUM{i,j}{}d_{i,j}\overline{e}\cdot h^{i}\overline{h}^{j}\overset{(\ref{vz2.3})}{=}\SUM{i,j}{}d_{i,j} \left(h-2\right)^{i}\overline{e}\cdot\overline{h}^{j}=\gamma\left(h-2,\overline{h}\right)\overline{e}\cdot 1.
\end{align*}
Therefore, (\ref{vze1}) and (\ref{vze3}) are true. Similarly, we can get (\ref{vze2}) and (\ref{vze4}). Consequently, we complete the proof. \end{proof}
\section{Free $U\left(\overline{\mathfrak{h}}\right)$-modules of rank 1 over $\g$}
\subsection{Construction of free $U\left(\overline{\mathfrak{h}}\right)$-modules of rank 1}
The following definition gives a precise construction of a $\g$-module structure on the polynomial algebra $\C\left[\,h,\overline{h}\,\right]$.
\begin{defi}\label{defi01}Let $\lambda\in \C^{\times}$, $a,b\in \C$ and $\gamma\left(h,\overline{h}\right)\in \C\left[\,h,\overline{h}\,\right]$. We can define the $\mathbf{g}$-module structure on $\C\left[\,h,\overline{h}\,\right]$ as follows:
\begin{align*}
  \Gamma\left(\lambda,a,b\right):\ \ \ e\cdot \gamma\left(h,\overline{h}\right)&=-2\lambda \overline{\partial}\left(\gamma\left(h-2,\overline{h}\right)\right),\ \ \ \ \ \ \ \ \ \ \ \ \overline{e}\cdot \gamma\left(h,\overline{h}\right)=\lambda\gamma\left(h-2,\overline{h}\right), \\
  \ \ \ \overline{f}\cdot \gamma\left(h,\overline{h}\right)&=-\frac{1}{4\lambda}\left(\overline{h}^{2}+a\right)\gamma\left(h+2,\overline{h}\right), \ \ \ \ x\cdot \gamma\left(h,\overline{h}\right)=x\gamma\left(h,\overline{h}\right),\\
  \ \ \ f\cdot \gamma\left(h,\overline{h}\right) &= -\frac{1}{2\lambda}\left((h+2)\overline{h}+b\right)\gamma\left(h+2,\overline{h}\right)-\frac{1}{2\lambda}
  \left(\overline{h}^{2}+a\right)\overline{\partial}\left(\gamma\left(h+2,\overline{h}\right)\right),
\end{align*}
\begin{align*}
 \Theta(\lambda,a,b):\ \ \ f\cdot \gamma\left(h,\overline{h}\right)&=2\lambda \overline{\partial}\left(\gamma\left(h+2,\overline{h}\right)\right),\ \ \ \ \ \ \ \ \ \ \ \ \ \ \overline{f}\cdot \gamma\left(h,\overline{h}\right)=\lambda\gamma\left(h+2,\overline{h}\right), \\
  \ \ \ \overline{e}\cdot \gamma\left(h,\overline{h}\right)&=-\frac{1}{4\lambda}\left(\overline{h}^{2}+a\right)\gamma\left(h-2,\overline{h}\right), \ \ \ \ \,x\cdot \gamma\left(h,\overline{h}\right)=x\gamma\left(h,\overline{h}\right),\\
  \ \ \ e\cdot \gamma\left(h,\overline{h}\right) &= -\frac{1}{2\lambda}\left((h-2)\overline{h}+b\right)\gamma\left(h-2,\overline{h}\right)+\frac{1}{2\lambda}
  \left(\overline{h}^{2}+a\right)\overline{\partial}\left(\gamma\left(h-2,\overline{h}\right)\right),
\end{align*}
\begin{align*}
  \Omega\left(\lambda,b,\beta_{1}\left(\overline{h}\right)\right):\ \ \ e\cdot \gamma\left(h,\overline{h}\right)&=\left(\frac{\lambda}{2}h+\al_{1}\left(\overline{h}\right)\right)\gamma\left(h-2,\overline{h}\right)-\lambda\left(\overline{h}+b\right)
  \overline{\partial}\left(\gamma\left(h-2,\overline{h}\right)\right),\\
  \ \ \ f\cdot \gamma\left(h,\overline{h}\right) &= -\left(\frac{1}{2\lambda}h-\beta_{1}\left(\overline{h}\right)\right)\gamma\left(h+2,\overline{h}\right)-\frac{1}{\lambda}\left(\overline{h}-b\right)\overline{\partial}\left(\gamma\left(h+2,\overline{h}\right)\right),\\
  \ \ \ \overline{e}\cdot \gamma\left(h,\overline{h}\right)&=\frac{\lambda}{2}\left(\overline{h}+b\right)\gamma\left(h-2,\overline{h}\right), \ \ \ x\cdot \gamma\left(h,\overline{h}\right)=x\gamma\left(h,\overline{h}\right),\\
  \ \ \ \overline{f}\cdot \gamma\left(h,\overline{h}\right)&=-\frac{1}{2\lambda}\left(\overline{h}-b\right)\gamma\left(h+2,\overline{h}\right),
\end{align*}
where $x\in \overline{\mathfrak{h}}$ and $\al_{1}\left(\overline{h}\right),\beta_{1}\left(\overline{h}\right)\in \C\left[\,\overline{h}\,\right]$  with $\al_{1}\left(\overline{h}\right),\beta_{1}\left(\overline{h}\right)$ satisfying
\begin{align}\label{1}
\left(
\begin{array}{c}
p_{0} \\
p_{1} \\
p_{2} \\
\vdots \\
p_{m} \\
\end{array}
\right)=\lambda^{2}A\left(
                        \begin{array}{c}
                          q_{0} \\
                          q_{1} \\
                          q_{2} \\
                          \vdots \\
                          q_{m} \\
                        \end{array}
                      \right)
                      \ \ \ {\rm with}\ \ A=\left(
                       \begin{array}{ccccc}
                         1 & 2b & 2b^{2} & \cdots & 2b^{m} \\
                         0 & 1 & 2b & \cdots & 2b^{m-1} \\
                         0 & 0 & 1 &\cdots  & 2b^{m-2} \\
                         \vdots & \vdots & \vdots& \ddots & \vdots \\
                         0 & 0 & 0 & \cdots & 1 \\
                       \end{array}
                     \right),\end{align}
where $m={\rm deg}_{\overline{h}}\left(\al_{1}\left(\overline{h}\right)\right)={\rm deg}_{\overline{h}}\left(\beta_{1}\left(\overline{h}\right)\right)$ and $p_{i},q_{i}$, for $i\in \{0,\ldots,m\}$, are coefficients of $\al_{1}\left(\overline{h}\right)$ and $\beta_{1}\left(\overline{h}\right)$, respectively.\end{defi}
Indeed, using involution $\tau$, which is defined as below
\begin{eqnarray*}
  \tau: &e&\longmapsto -f, \ \ \overline{e}\longmapsto -\overline{f}, \ \ \ h\longmapsto -h,\ \ \
  f\longmapsto -e, \ \ \ \overline{f}\longmapsto -\overline{e},\ \ \ \overline{h}\longmapsto -\overline{h},
\end{eqnarray*}
we only need to verify that $\Gamma(\lambda,a,b)$ and $\Omega\left(\lambda,b,\beta_{1}\left(\overline{h}\right)\right)$ are $\g$-modules. First, we prove that $\Gamma(\lambda,a,b)$ is a $\g$-module. Let $\gamma\left(h,\overline{h}\right)\in \Gamma(\lambda,a,b)$. Then we have:
\begin{align*}
  \overline{e}\cdot f\cdot \gamma\left(h,\overline{h}\right) &=-\frac{1}{2\lambda}\overline{e}\cdot \left(\left((h+2)\overline{h}+b\right)\gamma\left(h+2,\overline{h}\right)
 +\left(\overline{h}^{2}+a\right)\overline{\partial}\left(\gamma\left(h+2,\overline{h}\right)\right)\right)  \\
 &=-\frac{1}{2}\left(h\overline{h}+b\right)\gamma\left(h,\overline{h}\right)
   -\frac{1}{2}\left(\overline{h}^{2}+a\right)\overline{\partial}\left(\gamma\left(h,\overline{h}\right)\right),  \\
 f\cdot \overline{e}\cdot \gamma\left(h,\overline{h}\right)&=\lambda f\cdot\gamma\left(h-2,\overline{h}\right)
 =-\frac{1}{2}\left((h+2)\overline{h}+b\right)\gamma\left(h,\overline{h}\right)
   -\frac{1}{2}\left(\overline{h}^{2}+a\right)\overline{\partial}\left(\gamma\left(h,\overline{h}\right)\right),
 \end{align*} which implies that $\overline{e}\cdot f\cdot \gamma\left(h,\overline{h}\right)-f\cdot\overline{e}\cdot \gamma\left(h,\overline{h}\right) =\overline{h}\cdot\gamma\left(h,\overline{h}\right)$. Next, from the actions of $e,\overline{e}$ and $\overline{f}$ on $\gamma\left(h,\overline{h}\right)$, we get
 \begin{align*}
 \overline{e}\cdot  \overline{f}\cdot\gamma\left(h,\overline{h}\right) &=-\frac{1}{4\lambda} \overline{e}\cdot \left( \left(\overline{h}^{2}+a\right)\gamma\left(h+2,\overline{h}\right)\right)
 =-\frac{1}{4}\left(\overline{h}^{2}+a\right)\gamma\left(h,\overline{h}\right),\\
 \overline{f}\cdot  \overline{e}\cdot\gamma\left(h,\overline{h}\right) &=\lambda \overline{f}\cdot \gamma\left(h-2,\overline{h}\right)=-\frac{1}{4}\left(\overline{h}^{2}+a\right)\gamma\left(h,\overline{h}\right),\\
  e\cdot \overline{e}\cdot \gamma\left(h,\overline{h}\right) &=\lambda e\cdot \gamma\left(h-2,\overline{h}\right)=-2\lambda^{2}\overline{\partial}\left(\gamma\left(h-4,\overline{h}\right)\right),  \\
  \overline{e}\cdot e\cdot \gamma\left(h,\overline{h}\right)&=-2\lambda\overline{e}\cdot \overline{\partial}\left(\gamma\left(h-2,\overline{h}\right)\right)=-2\lambda^{2}\overline{\partial}\left(\gamma\left(h-4,\overline{h}\right)\right),
\end{align*}
which shows that $\overline{e}\cdot\overline{f}\cdot \gamma\left(h,\overline{h}\right)-\overline{f}\cdot\overline{e}\cdot \gamma\left(h,\overline{h}\right)= e\cdot\overline{e}\cdot \gamma\left(h,\overline{h}\right)-\overline{e}\cdot e\cdot \gamma\left(h,\overline{h}\right) = 0$. Moreover, from a little lengthy but straightforward computation, we obtain
\begin{align*}
 e\cdot f\cdot \gamma\left(h,\overline{h}\right)
 &=h\gamma\left(h,\overline{h}\right)+\left((h+2)\overline{h}+b\right)\overline{\partial}\left(\gamma\left(h,\overline{h}\right)\right)
        +\left(\overline{h}^{2}+a\right)\overline{\partial}^{2}\left(\gamma\left(h,\overline{h}\right)\right),\\
 f\cdot e\cdot \gamma\left(h,\overline{h}\right)
 &=\left((h+2)\overline{h}+b\right)\overline{\partial}\left(\gamma\left(h,\overline{h}\right)\right)+\left(\overline{h}^{2}+a\right)
       \overline{\partial}^{2}\left(\gamma\left(h,\overline{h}\right)\right),  \\
f\cdot \overline{f}\cdot \gamma\left(h,\overline{h}\right)
&=\frac{1}{8\lambda^{2}}\left(\overline{h}^{2}+a\right)\left(\left((h+4)\overline{h}+b\right)\gamma\left(h+4,\overline{h}\right)
+\left(\overline{h}^{2}+a\right)\overline{\partial}\left(\gamma\left(h+4,\overline{h}\right)\right)\right),  \\
 \overline{f}\cdot f\cdot \gamma\left(h,\overline{h}\right)
   &=\frac{1}{8\lambda^{2}}\left(\overline{h}^{2}+a\right)\left(\left((h+4)\overline{h}+b\right)\gamma\left(h+4,\overline{h}\right)
+\left(\overline{h}^{2}+a\right)\overline{\partial}\left(\gamma\left(h+4,\overline{h}\right)\right)\right),\\
e\cdot \overline{f}\cdot \gamma\left(h,\overline{h}\right) &=
\overline{h}\gamma\left(h,\overline{h}\right)+\frac{\overline{h}^{2}+a}{2}\,\overline{\partial}\left(\gamma\left(h,\overline{h}\right)\right),\ \ \ \ \overline{f}\cdot e\cdot \gamma\left(h,\overline{h}\right) =
\frac{\overline{h}^{2}+a}{2}\,\overline{\partial}\left(\gamma\left(h,\overline{h}\right)\right),
\end{align*}
which yields the following equations
\begin{align*}
  e\cdot f\cdot\gamma\left(h,\overline{h}\right)-f\cdot e \cdot\gamma\left(h,\overline{h}\right) &=h\cdot\gamma\left(h,\overline{h}\right), \ \ \ \ \ f\cdot\overline{f}\cdot\gamma\left(h,\overline{h}\right)-\overline{f}\cdot f\cdot\gamma\left(h,\overline{h}\right)= 0,\\
  e\cdot\overline{f}\cdot\gamma\left(h,\overline{h}\right)-\overline{f}\cdot e\cdot\gamma\left(h,\overline{h}\right) &=\overline{h}\cdot\gamma\left(h,\overline{h}\right).
\end{align*}
Thus, $\Gamma(\lambda,a,b)$ is a $\g$-module.

Next, we want to show that the actions of $e$, $f$, $\overline{e}$ and $\overline{f}$ on $\Omega\left(\lambda,b,\beta_{1}\left(\overline{h}\right)\right)$ satisfy the relations in $\g$. According to the actions of $e,f$ on $\Omega\left(\lambda,b,\beta_{1}\left(\overline{h}\right)\right)$, respectively, for any $\gamma\left(h,\overline{h}\right)\in \Omega\left(\lambda,b,\beta_{1}\left(\overline{h}\right)\right)$, we have
\begin{eqnarray*}\!\!\!\!\!\!\!\!\!\!\!\!&\!\!\!\!\!\!\!\!\!\!\!\!\!\!\!&
\ \ \ \ \ \ e\cdot f\cdot \gamma\left(h,\overline{h}\right)=-\frac{\lambda h+2\al_{1}\left(\overline{h}\right)}{2}\left(\frac{h-2}{2\lambda}-\beta_{1}\left(\overline{h}\right)\right)\gamma\left(h,\overline{h}\right)
   -\lambda\left(\overline{h}+b\right)\overline{\partial}\left(\beta_{1}\left(\overline{h}\right)\right)\gamma\left(h,\overline{h}\right)
\nonumber\\\!\!\!\!\!\!\!\!\!\!\!\!&\!\!\!\!\!\!\!\!\!\!\!\!\!\!\!&
 \ \ \ \ \ \ \ \ \ \ \ \ \ \ \ \ \ \ \ \ \ \ \ \ \ \ \
+\kappa_{1}\left(h,\overline{h}\right)\overline{\partial}\left(\gamma\left(h,\overline{h}\right)\right)
 +\left(\overline{h}+b\right)\left(\overline{h}-b\right)\overline{\partial}^{2}\left(\gamma\left(h,\overline{h}\right)\right),
\nonumber\\\!\!\!\!\!\!\!\!\!\!\!\!&\!\!\!\!\!\!\!\!\!\!\!\!\!\!\!&
\ \ \ \ \ \ f\cdot e\cdot \gamma\left(h,\overline{h}\right)=-\frac{\lambda (h+2)+2\al_{1}\left(\overline{h}\right)}{2}\left(\frac{h}{2\lambda}-\beta_{1}\left(\overline{h}\right)\right)\gamma\left(h,\overline{h}\right)
   -\frac{\overline{h}-b}{\lambda}\overline{\partial}\left(\al_{1}\left(\overline{h}\right)\right)\gamma\left(h,\overline{h}\right)
\nonumber\\\!\!\!\!\!\!\!\!\!\!\!\!&\!\!\!\!\!\!\!\!\!\!\!\!\!\!\!&
 \ \ \ \ \ \ \ \ \ \ \ \ \ \ \ \ \ \ \ \ \ \ \ \ \ \ \ \ +\kappa_{1}\left(h,\overline{h}\right)\overline{\partial}\left(\gamma\left(h,\overline{h}\right)\right)
 +\left(\overline{h}+b\right)\left(\overline{h}-b\right)\overline{\partial}^{2}\left(\gamma\left(h,\overline{h}\right)\right),
 \end{eqnarray*}
 where $\kappa_{1}\left(h,\overline{h}\right)=\lambda\left(\overline{h}+b\right)\left(\frac{h}{2\lambda}-
\b_{1}\left(\overline{h}\right)\right)-\frac{\overline{h}-b}{\lambda}\left(\frac{\lambda h}{2}+\al_{1}\left(\overline{h}\right)\right)$. Then, combined with (\ref{1}), it follows that
\begin{eqnarray*}\!\!\!\!\!\!\!\!\!\!\!\!&\!\!\!\!\!\!\!\!\!\!\!\!\!\!\!&
  e\cdot f\cdot \gamma\left(h,\overline{h}\right)-f\cdot e\cdot \gamma\left(h,\overline{h}\right)=h\gamma\left(h,\overline{h}\right)+\frac{1}{\lambda}\left(\al_{1}\left(\overline{h}\right)
  -\lambda^{2}\beta_{1}\left(\overline{h}\right)\right)\gamma\left(h,\overline{h}\right)
\nonumber\\\!\!\!\!\!\!\!\!\!\!\!\!&\!\!\!\!\!\!\!\!\!\!\!\!\!\!\!&
 \ \ \ -\frac{1}{\lambda}\left(\lambda^{2}\left(\overline{h}+b\right)\overline{\partial}\left(\beta_{1}\left(\overline{h}\right)\right)
-\left(\overline{h}-b\right)\overline{\partial}\left(\al_{1}\left(\overline{h}\right)\right) \right)\gamma\left(h,\overline{h}\right)
   =h\cdot\gamma\left(h,\overline{h}\right).
\end{eqnarray*}
Next, considering the action of $\overline{f}$ on $\gamma\left(h,\overline{h}\right)$, we have
\begin{eqnarray*}\!\!\!\!\!\!\!\!\!\!\!\!&\!\!\!\!\!\!\!\!\!\!\!\!\!\!\!&
\ \ \ \ \ \ e\cdot \overline{f}\cdot \gamma\left(h,\overline{h}\right)
=-\frac{1}{2\lambda}\left(\overline{h}-b\right)\left(\frac{\lambda}{2}h+\al_{1}\left(\overline{h}\right)\right)\gamma\left(h,\overline{h}\right)
+\frac{1}{2}\left(\overline{h}+b\right)\gamma\left(h,\overline{h}\right)
+\frac{1}{2}\kappa_{2}\left(\overline{h}\right)\overline{\partial}\left(\gamma\left(h,\overline{h}\right)\right),
\nonumber\\\!\!\!\!\!\!\!\!\!\!\!\!&\!\!\!\!\!\!\!\!\!\!\!\!\!\!\!&
\ \ \ \ \ \ \overline{f}\cdot e\cdot \gamma\left(h,\overline{h}\right)=
-\frac{1}{2\lambda}\left(\overline{h}-b\right)\left(\frac{\lambda}{2}(h+2)+\al_{1}\left(\overline{h}\right)\right)\gamma\left(h,\overline{h}\right)
+\frac{1}{2}\kappa_{2}\left(\overline{h}\right)\overline{\partial}\left(\gamma\left(h,\overline{h}\right)\right),
\nonumber\\\!\!\!\!\!\!\!\!\!\!\!\!&\!\!\!\!\!\!\!\!\!\!\!\!\!\!\!&
\ \ \ \ \ \ f\cdot \overline{f}\cdot \gamma\left(h,\overline{h}\right)=\frac{1}{2\lambda}\left(\overline{h}-b\right)\left(\frac{h+2}{2\lambda}-\beta_{1}\left(\overline{h}\right)\right)\gamma\left(h+4,\overline{h}\right)
+\frac{\left(\overline{h}-b\right)^{2}}{2\lambda^{2}}\overline{\partial}\left(\gamma\left(h+4,\overline{h}\right)\right) ,
\nonumber\\\!\!\!\!\!\!\!\!\!\!\!\!&\!\!\!\!\!\!\!\!\!\!\!\!\!\!\!&
\ \ \ \ \ \ \overline{f}\cdot f\cdot \gamma\left(h,\overline{h}\right)=\frac{1}{2\lambda}\left(\overline{h}-b\right)\left(\frac{h+2}{2\lambda}-\beta_{1}\left(\overline{h}\right)\right)\gamma\left(h+4,\overline{h}\right)
+\frac{\left(\overline{h}-b\right)^{2}}{2\lambda^{2}}\overline{\partial}\left(\gamma\left(h+4,\overline{h}\right)\right),
\end{eqnarray*}
where $\kappa_{2}\left(\overline{h}\right)=\left(\overline{h}+b\right)\left(\overline{h}-b\right)$. Then it follows that $ e\cdot \overline{f}\cdot \gamma\left(h,\overline{h}\right)-\overline{f}\cdot e\cdot \gamma\left(h,\overline{h}\right)=\overline{h}\cdot\gamma\left(h,\overline{h}\right)$ and $f\cdot \overline{f}\cdot \gamma\left(h,\overline{h}\right)-\overline{f}\cdot f\cdot \gamma\left(h,\overline{h}\right)=0$.
Furthermore, by a little lengthy but straightforward computation, we obtain
\begin{eqnarray*}\!\!\!\!\!\!\!\!\!\!\!\!&\!\!\!\!\!\!\!\!\!\!\!\!\!\!\!&
\ \ \ \ \ \ \ \ \ \ \ \ \ \overline{e}\cdot f\cdot \gamma\left(h,\overline{h}\right)
=-\frac{\lambda}{2}\left(\overline{h}+b\right)\left(\frac{h-2}{2\lambda}-\beta_{1}\left(\overline{h}\right)\right)\gamma\left(h,\overline{h}\right)
-\frac{1}{2}\kappa_{2}\left(\overline{h}\right)\overline{\partial}\left(\gamma\left(h,\overline{h}\right)\right),
\nonumber\\\!\!\!\!\!\!\!\!\!\!\!\!&\!\!\!\!\!\!\!\!\!\!\!\!\!\!\!&
\ \ \ \ \ \ \ \ \ \ \ \ \ f\cdot \overline{e}\cdot \gamma\left(h,\overline{h}\right)=
-\frac{\lambda}{2}\left(\overline{h}+b\right)\left(\frac{h}{2\lambda}-\beta_{1}\left(\overline{h}\right)\right)\gamma\left(h,\overline{h}\right)
-\frac{1}{2}\left(\overline{h}-b\right)\gamma\left(h,\overline{h}\right)
-\frac{1}{2}\kappa_{2}\left(\overline{h}\right)\overline{\partial}\left(\gamma\left(h,\overline{h}\right)\right),
\nonumber\\\!\!\!\!\!\!\!\!\!\!\!\!&\!\!\!\!\!\!\!\!\!\!\!\!\!\!\!&
\ \ \ \ \ \ \ \ \ \ \ \ \ e\cdot \overline{e}\cdot \gamma\left(h,\overline{h}\right)=
\frac{\lambda}{2}\left(\overline{h}+b\right)\left(\frac{\lambda (h-2)}{2}+\al_{1}\left(\overline{h}\right)\right)\gamma\left(h-4,\overline{h}\right)
-\frac{\lambda^{2}}{2}\left(\overline{h}+b\right)^{2}\overline{\partial}\left(\gamma\left(h-4,\overline{h}\right)\right),
\nonumber\\\!\!\!\!\!\!\!\!\!\!\!\!&\!\!\!\!\!\!\!\!\!\!\!\!\!\!\!&
\ \ \ \ \ \ \ \ \ \ \ \ \ \overline{e}\cdot e\cdot \gamma\left(h,\overline{h}\right)=
\frac{\lambda}{2}\left(\overline{h}+b\right)\left(\frac{\lambda(h-2)}{2}+\al_{1}\left(\overline{h}\right)\right)\gamma\left(h-4,\overline{h}\right)
-\frac{\lambda^{2}}{2}\left(\overline{h}+b\right)^{2}\overline{\partial}\left(\gamma\left(h-4,\overline{h}\right)\right),
\nonumber\\\!\!\!\!\!\!\!\!\!\!\!\!&\!\!\!\!\!\!\!\!\!\!\!\!\!\!\!&
\ \ \ \ \ \ \ \ \ \ \ \ \ \overline{e}\cdot \overline{f}\cdot \gamma\left(h,\overline{h}\right)= -\frac{1}{4}\kappa_{2}\left(\overline{h}\right)\gamma\left(h,\overline{h}\right),
\ \ \ \ \ \ \ \ \ \overline{f}\cdot \overline{e}\cdot \gamma\left(h,\overline{h}\right)=-\frac{1}{4}\kappa_{2}\left(\overline{h}\right)\gamma\left(h,\overline{h}\right),
\end{eqnarray*}
where $\kappa_{2}\left(\overline{h}\right)$ is defined as above. Then we obtain \begin{align*}
e\cdot \overline{e}\cdot \gamma\left(h,\overline{h}\right)-\overline{e}\cdot e\cdot \gamma\left(h,\overline{h}\right) &= 0, \ \ \ \ \,
\overline{e}\cdot f\cdot \gamma\left(h,\overline{h}\right)-f\cdot \overline{e}\cdot \gamma\left(h,\overline{h}\right)=\overline{h}\cdot \gamma\left(h,\overline{h}\right),\\
\overline{e}\cdot \overline{f}\cdot \gamma\left(h,\overline{h}\right)-\overline{f}\cdot \overline{e}\cdot \gamma\left(h,\overline{h}\right) &=0.
\end{align*}
Consequently, we can see that $\Omega\left(\lambda,b,\beta_{1}\left(\overline{h}\right)\right)$ is a $\g$-module.

\subsection{Classification of free $U\left(\overline{\mathfrak{h}}\right)$-modules of rank 1}
In this section, we will prove Theorems \ref{prop-3.2bc} and \ref{prop-3.2a}. First, we give the proof of Theorem \ref{prop-3.2bc}, which gives a complete classification of free $U\left(\overline{\mathfrak{h}}\right)$-modules of rank 1 over Takiff $\mathfrak{sl}_{2}$ $\g$.
\subsubsection{Proof of Theorem \ref{prop-3.2bc}}
Let $M$ be a $\g$-module such that it is free of rank 1 as a $U\left(\overline{\mathfrak{h}}\right)$-module, where $U\left(\overline{\mathfrak{h}}\right)=\C\left[h,\overline{h}\,\right]$. Take $1\in M$, then we can write $M=U\left(\overline{\mathfrak{h}}\right)\cdot1=\C\left[h,\overline{h}\,\right]\cdot1$. Here and below, we denote $e:=e\otimes 1$, $f:=f\otimes 1$, $h:=h\otimes 1$ and write
\begin{align*}
  e\cdot 1= \al\left(h,\overline{h}\right),\ \ \  f\cdot 1= \beta\left(h,\overline{h}\right),\ \ \ \
  \overline{e}\cdot 1=  \overline{\al}\left(h,\overline{h}\right), \ \ \ \overline{f}\cdot 1= \overline{\beta}\left(h,\overline{h}\right).
\end{align*}
Then the proof of Theorem \ref{prop-3.2bc} follows from Lemma \ref{lemma1.1} combined with the following two Lemmas.
\begin{lemm}${\rm deg}_{h}\overline{\al}\left(h,\overline{h}\right)={\rm deg}_{h}\overline{\beta}\left(h,\overline{h}\right)=0.$\end{lemm}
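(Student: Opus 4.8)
The plan is to exploit the special structure of $\g$: the elements $\overline{e},\overline{h},\overline{f}$ span an \emph{abelian} ideal, since $[a\otimes t,b\otimes t]=[a,b]\otimes t^{2}=0$ in $\g$. In particular $[\overline{e},\overline{f}]=0$. Evaluating this single relation on the generator $1$ and using only the action formulas (\ref{vze3}) and (\ref{vze4}) of Lemma \ref{lemma1.1} will produce a purely multiplicative functional equation relating $\overline{\al}$ and $\overline{\beta}$, from which the $h$-degrees can be extracted by comparing top coefficients. This deliberately avoids the relations involving $e,f$, which additionally mix in $\al,\beta$ and the operator $\overline{\partial}$ and are not needed here.

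First I would check that $\overline{\al}$ and $\overline{\beta}$ are both nonzero. If $\overline{\al}=\overline{e}\cdot 1=0$, then by (\ref{vze3}) the element $\overline{e}$ annihilates all of $M$; substituting into $[\overline{e},f]=\overline{h}$, namely $\overline{e}\cdot f\cdot 1-f\cdot\overline{e}\cdot 1=\overline{h}$, gives $0=\overline{h}$, a contradiction. Symmetrically, $\overline{\beta}=\overline{f}\cdot 1=0$ together with (\ref{vze4}) and $[e,\overline{f}]=\overline{h}$ forces $0=\overline{h}$, again impossible. Hence $\overline{\al},\overline{\beta}\in\C\left[h,\overline{h}\,\right]\setminus\{0\}$, and the nonnegative integers $p:={\rm deg}_{h}\overline{\al}$ and $q:={\rm deg}_{h}\overline{\beta}$ are well defined genuine leading degrees.

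Next I would evaluate $[\overline{e},\overline{f}]=0$ on $1$. By (\ref{vze3}) and (\ref{vze4}),
\begin{align*}
\overline{e}\cdot\overline{f}\cdot 1&=\overline{e}\cdot\overline{\beta}\left(h,\overline{h}\right)=\overline{\beta}\left(h-2,\overline{h}\right)\overline{\al}\left(h,\overline{h}\right),\\
\overline{f}\cdot\overline{e}\cdot 1&=\overline{f}\cdot\overline{\al}\left(h,\overline{h}\right)=\overline{\al}\left(h+2,\overline{h}\right)\overline{\beta}\left(h,\overline{h}\right),
\end{align*}
so that
\begin{equation*}
\overline{\beta}\left(h-2,\overline{h}\right)\overline{\al}\left(h,\overline{h}\right)=\overline{\al}\left(h+2,\overline{h}\right)\overline{\beta}\left(h,\overline{h}\right).
\end{equation*}
Write $\overline{\al}=a_{p}\left(\overline{h}\right)h^{p}+a_{p-1}\left(\overline{h}\right)h^{p-1}+\cdots$ and $\overline{\beta}=b_{q}\left(\overline{h}\right)h^{q}+b_{q-1}\left(\overline{h}\right)h^{q-1}+\cdots$ with $a_{p},b_{q}\neq 0$. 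Both sides of the displayed identity have degree $p+q$ in $h$ with the same leading coefficient $a_{p}b_{q}$, so the top term carries no information. Assuming, towards a contradiction, that $p+q\geq 1$, I would instead compare the coefficient of $h^{p+q-1}$: the contributions $a_{p-1}b_{q}$ and $a_{p}b_{q-1}$ occur on both sides and cancel, leaving $-2q\,a_{p}b_{q}=2p\,a_{p}b_{q}$. Dividing by $a_{p}b_{q}\neq 0$ yields $-2q=2p$, which is impossible for nonnegative $p,q$ unless $p=q=0$, contradicting $p+q\geq 1$. Therefore $p=q=0$, i.e. ${\rm deg}_{h}\overline{\al}={\rm deg}_{h}\overline{\beta}=0$.

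The computation is short, and the only points requiring care are the two I have isolated: verifying $\overline{\al},\overline{\beta}\neq 0$, and recognizing that it is the \emph{subleading} coefficient in $h$, not the leading one, that detects the shift $h\mapsto h\pm 2$ and hence the degree. The genuine conceptual step — selecting the abelian relation $[\overline{e},\overline{f}]=0$ rather than any relation involving $e$ or $f$ — is what keeps the argument free of $\al,\beta$ and of the derivative $\overline{\partial}$, and is really the crux.
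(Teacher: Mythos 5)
Your proof is correct and takes essentially the same route as the paper: both evaluate the single relation $\left[\,\overline{e},\overline{f}\,\right]\cdot 1=0$ via (\ref{vze3}) and (\ref{vze4}) to obtain $\overline{\beta}\left(h-2,\overline{h}\right)\overline{\al}\left(h,\overline{h}\right)=\overline{\al}\left(h+2,\overline{h}\right)\overline{\beta}\left(h,\overline{h}\right)$, and then extract the degrees by comparing the subleading coefficient of $h^{p+q-1}$, where the shifts $h\mapsto h\pm 2$ yield $-2q=2p$ and hence $p=q=0$. Your preliminary verification that $\overline{\al},\overline{\beta}\neq 0$ (using $\left[\,\overline{e},f\,\right]=\overline{h}$ and $\left[\,e,\overline{f}\,\right]=\overline{h}$) is a small rigor improvement that the paper leaves implicit, but it does not change the substance of the argument.
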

\begin{proof}Due to Lemma \ref{lemma1.1}, applying $\overline{e}$ on $\overline{f}\cdot1$ and $\overline{f}$ on $\overline{e}\cdot 1$ respectively, we obtain
\begin{align*}
  \overline{e}\cdot\overline{f}\cdot1 &=\overline{e}\cdot \overline{\beta}\left(h,\overline{h}\right) = \overline{\beta}\left(h-2,\overline{h}\right)\overline{\al}\left(h,\overline{h}\right),  \\
  \overline{f}\cdot \overline{e}\cdot 1&=\overline{f}\cdot \overline{\al}\left(h,\overline{h}\right) = \overline{\al}\left(h+2,\overline{h}\right)\overline{\beta}\left(h,\overline{h}\right).
\end{align*}
This implies that
\begin{equation}\label{eq90}
0=\overline{\beta}\left(h-2,\overline{h}\right)\overline{\al}\left(h,\overline{h}\right)
-\overline{\al}\left(h+2,\overline{h}\right)\overline{\beta}\left(h,\overline{h}\right).
\end{equation}
Now we write $\overline{\al}\left(h,\overline{h}\right)=\sum_{i=0}^{m}c_{i}h^{i}\kappa_{i}\left(\overline{h}\right)$ and $\overline{\beta}\left(h,\overline{h}\right)=\sum_{j=0}^{n}d_{j}h^{j}\kappa'_{j}\left(\overline{h}\right)$, for some $m,n\in \Z_{+}$, $c_{i},d_{j}\in \C$ and $\kappa_{i}\left(\overline{h}\right), \kappa'_{j}\left(\overline{h}\right)\in \C\left[\,\overline{h}\,\right]$. Then, comparing the coefficients of $h^{m+n-1}\kappa_{m}\left(\overline{h}\right)\kappa'_{n}\left(\overline{h}\right)$  in equation (\ref{eq90}), we have
\begin{equation*}
  m=n=0.
\end{equation*}
Thus it follows that ${\rm deg}_{h}\overline{\al}\left(h,\overline{h}\right)={\rm deg}_{h}\overline{\beta}\left(h,\overline{h}\right)=0$ and the proof is completed.
\end{proof}
For convenience, we write $\overline{e}\cdot 1=\overline{\al}\left(\overline{h}\right)$, $\overline{f}\cdot 1=\overline{\beta}\left(\overline{h}\right)$ in the following.
\begin{lemm}
\begin{itemize}
  \item[\rm(1)]If $e\cdot 1=0$ and $f\cdot 1\neq0$, then we have
  \begin{equation}\label{eqi1}
    {\rm deg}_{h}\beta\left(h,\overline{h}\right)={\rm deg}_{\overline{h}}\beta\left(h,\overline{h}\right)=1, \ \ {\rm deg}_{\overline{h}}\overline{\al}\left(\overline{h}\right)=0,\ \ {\rm deg}_{\overline{h}}\overline{\beta}\left(\overline{h}\right)=2.
  \end{equation}
  Consequently, $\beta\left(h,\overline{h}\right)=-\frac{1}{2\lambda}\left((h+2)\overline{h}+b\right)$, $\overline{\al}\left(\overline{h}\right)=\lambda$ and $\overline{\beta}\left(\overline{h}\right)=-\frac{1}{4\lambda}\left(\overline{h}^{2}+a\right)$, where $\lambda\in \C^{\times}$ and $a,b\in \C$.
  \item[\rm(2)]If $e\cdot 1\neq0$ and $f\cdot 1=0$, then we have
  \begin{equation}\label{eqi2}
    {\rm deg}_{h}\al\left(h,\overline{h}\right)={\rm deg}_{\overline{h}}\al\left(h,\overline{h}\right)=1, \ \ {\rm deg}_{\overline{h}}\overline{\al}\left(\overline{h}\right)=2, \ \ {\rm deg}_{\overline{h}}\overline{\beta}\left(\overline{h}\right)=0.
  \end{equation}
  Thus, $\al\left(h,\overline{h}\right)=-\frac{1}{2\lambda}\left((h-2)\overline{h}+b\right)$, $\overline{\al}\left(\overline{h}\right)=-\frac{1}{4\lambda}\left(\overline{h}^{2}+a\right)$ and $\overline{\beta}\left(\overline{h}\right)=\lambda$, where $\lambda\in \C^{\times}$ and $a,b\in \C$.
  \item[\rm(3)]If $e\cdot 1\neq0$ and $f\cdot 1\neq0$, then we have
  \begin{align*}
  {\rm deg}_{h}\al\left(h,\overline{h}\right)=1,
  \ \ \ {\rm deg}_{h}\beta\left(h,\overline{h}\right)=1,\ \ \ \
{\rm deg}_{\overline{h}}\overline{\al}\left(\overline{h}\right)=1,
  \ \ \ {\rm deg}_{\overline{h}}\overline{\beta}\left(\overline{h}\right)=1.
\end{align*}
Therefore, we have $\al\left(h,\overline{h}\right)=\frac{\lambda}{2}h+\al_{1}\left(\overline{h}\right)$, $ \beta\left(h,\overline{h}\right)=-\frac{1}{2\lambda}h+\beta_{1}\left(\overline{h}\right)$, $\overline{\al}\left(\overline{h}\right)=\frac{\lambda}{2}\left(\overline{h}+b\right)$ and $\overline{\beta}\left(\overline{h}\right)=-\frac{1}{2\lambda}\left(\overline{h}-b\right)$, for some $\lambda\in \C^{\times}$, $b\in \C$ and $\al_{1}\left(\overline{h}\right),\beta_{1}\left(\overline{h}\right)\in \C \left[\,\overline{h}\,\right]$, with $\al_{1}\left(\overline{h}\right),\beta_{1}\left(\overline{h}\right)$ satisfying
$\left(
\begin{array}{c}
p_{0} \\
p_{1} \\
p_{2} \\
\vdots \\
p_{m} \\
\end{array}
\right)=\lambda^{2}A\left(
                        \begin{array}{c}
                          q_{0} \\
                          q_{1} \\
                          q_{2} \\
                          \vdots \\
                          q_{m} \\
                        \end{array}
                      \right)$ with $A=\left(
                       \begin{array}{ccccc}
                         1 & 2b & 2b^{2} & \cdots & 2b^{m} \\
                         0 & 1 & 2b & \cdots & 2b^{m-1} \\
                         0 & 0 & 1 &\cdots  & 2b^{m-2} \\
                         \vdots & \vdots & \vdots& \ddots & \vdots \\
                         0 & 0 & 0 & \cdots & 1 \\
                       \end{array}
                     \right)$, where $m={\rm deg}_{\overline{h}}\left(\al_{1}\left(\overline{h}\right)\right)={\rm deg}_{\overline{h}}\left(\beta_{1}\left(\overline{h}\right)\right)$ and $p_{i},q_{i}\in \C$, for $i\in \{0,\ldots,m\}$, are the coefficients of $\al_{1}\left(\overline{h}\right)$ and $\beta_{1}\left(\overline{h}\right)$, respectively.
\end{itemize}
\end{lemm}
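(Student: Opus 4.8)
The plan is to turn the bracket relations of $\g$ into functional equations for the four polynomials $\al=e\cdot1$, $\beta=f\cdot1$, $\overline{\al}=\overline{e}\cdot1$, $\overline{\beta}=\overline{f}\cdot1$ and then to read the degrees and coefficients straight off those equations. By the preceding Lemma we already know $\overline{\al}=\overline{\al}(\overline{h})$ and $\overline{\beta}=\overline{\beta}(\overline{h})$. Writing $\al(h)$ for $\al(h,\overline{h})$ (the unshifted second argument $\overline{h}$ is suppressed) and $'=\tfrac{d}{d\overline{h}}$, I would evaluate $[e,\overline{f}]=\overline{h}$, $[\overline{e},f]=\overline{h}$ and $[e,f]=h$ on the cyclic vector $1$ using Lemma \ref{lemma1.1}, obtaining
\begin{align*}
  \overline{\beta}\bigl(\al(h)-\al(h+2)\bigr)-2\overline{\beta}'\,\overline{\al}&=\overline{h},\tag{E1}\\
  \overline{\al}\bigl(\beta(h-2)-\beta(h)\bigr)-2\overline{\al}'\,\overline{\beta}&=\overline{h},\tag{E2}\\
  \beta(h-2)\al(h)-\al(h+2)\beta(h)-2\overline{\partial}\bigl(\beta(h-2)\bigr)\overline{\al}-2\overline{\partial}\bigl(\al(h+2)\bigr)\overline{\beta}&=h.\tag{E3}
\end{align*}
Together with $[\overline{e},\overline{f}]=0$ (used already in the preceding Lemma) these carry all the content, since the remaining brackets ($[h,\,\cdot\,]$, $[\overline{h},\,\cdot\,]$, $[e,\overline{e}]$, $[f,\overline{f}]$) hold automatically for operators of the shape given by Lemma \ref{lemma1.1}.

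First I would dispose of part (2) by symmetry. The involution $\tau$ is an automorphism of $\g$ fixing $\overline{\mathfrak{h}}$ (it sends $h\mapsto-h$, $\overline{h}\mapsto-\overline{h}$), so twisting by $\tau$ preserves freeness of rank one over $U(\overline{\mathfrak{h}})$ and exchanges the roles of $e$ and $f$; hence a module with $e\cdot1\neq0$, $f\cdot1=0$ is the $\tau$-twist of one with $e\cdot1=0$, $f\cdot1\neq0$, and (2) follows from (1) by transporting the conclusion through $\tau$ (which preserves all degrees and merely swaps the roles). For part (1) I set $\al=0$, so (E1) becomes $-2\overline{\beta}'\overline{\al}=\overline{h}$ and (E3) becomes $-2\overline{\partial}\bigl(\beta(h-2)\bigr)\overline{\al}=h$. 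Comparing $\overline{h}$-degrees in the former gives $\deg_{\overline{h}}\overline{\al}+\deg_{\overline{h}}\overline{\beta}=2$; since the right side of the latter is $\overline{h}$-free, the factor $\overline{\al}$ must be a nonzero constant $\lambda$, which forces $\deg_{\overline{h}}\overline{\beta}=2$ and, on integrating, $\overline{\beta}=-\tfrac1{4\lambda}(\overline{h}^2+a)$. The same equation yields $\overline{\partial}(\beta(h-2))=-h/2\lambda$, so $\deg_{\overline{h}}\beta\le1$ and $\deg_h\beta=1$, and (E2) then pins the $h$-independent part, producing $\beta=-\tfrac1{2\lambda}\bigl((h+2)\overline{h}+b\bigr)$, as asserted.

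For part (3) both $\al$ and $\beta$ are nonzero and all of (E1)--(E3) are needed. The first step is degree bookkeeping in $h$: in (E1) both $\overline{h}$ and $-2\overline{\beta}'\overline{\al}$ have $h$-degree $0$, so $\overline{\beta}\bigl(\al(h)-\al(h+2)\bigr)$ has $h$-degree $\le0$; since $\al(h)-\al(h+2)$ drops the top $h$-power by exactly one, this gives $\deg_h\al\le1$, and symmetrically (E2) gives $\deg_h\beta\le1$. Granting the equalities $\deg_h\al=\deg_h\beta=1$, a leading-coefficient analysis of (E1)--(E3) shows the top $h$-coefficients are the constants $\tfrac\lambda2$ and $-\tfrac1{2\lambda}$, after which (E1)--(E2) become linear in $\overline{\al},\overline{\beta},\overline{\al}',\overline{\beta}'$ and force $\deg_{\overline{h}}\overline{\al}=\deg_{\overline{h}}\overline{\beta}=1$, giving $\overline{\al}=\tfrac\lambda2(\overline{h}+b)$ and $\overline{\beta}=-\tfrac1{2\lambda}(\overline{h}-b)$. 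Writing $\al=\tfrac\lambda2 h+\al_1(\overline{h})$ and $\beta=-\tfrac1{2\lambda}h+\beta_1(\overline{h})$ and substituting into (E3), the $h$-linear part is an identity while the $h$-free part collapses to $\al_1+(\overline{h}-b)\al_1'=\lambda^2\bigl(\beta_1+(\overline{h}+b)\beta_1'\bigr)$; matching coefficients of $\overline{h}^k$ gives the recurrence $p_k-bp_{k+1}=\lambda^2(q_k+bq_{k+1})$, whose solution (with $p_{m+1}=q_{m+1}=0$) is precisely the matrix identity (\ref{1}).

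The step I expect to be the real obstacle is exactly the upgrade from the inequalities $\deg_h\al\le1$, $\deg_h\beta\le1$ to the equalities $\deg_h\al=\deg_h\beta=1$ in part (3) — that is, the exclusion of the degenerate regime in which $\al$ or $\beta$ is independent of $h$. This cannot be read off from any single one of (E1)--(E3): in that regime several of the equations remain individually solvable (for instance with $\overline{\al}$ constant and $\beta$ linear in $h$), so the exclusion must come from combining all three and exploiting the precise coupling between the $h$- and $\overline{h}$-gradings. I would therefore concentrate the bulk of the effort on verifying that no degenerate solution is simultaneously compatible with $\al\neq0$, $\beta\neq0$ and the required forms of $\overline{\al},\overline{\beta}$, since this coupling is the delicate heart of the classification.
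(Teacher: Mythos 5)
Your setup is the paper's: your (E1)--(E3) are exactly the paper's equations (\ref{eq2.2}), (\ref{eq2.1}) and (\ref{eq2.3}), your parts (1) and (2) are correct (the integration argument in part (1) is in fact tidier than the paper's coefficient matching, and legitimately needs no further relations), and your closing recurrence $p_k-bp_{k+1}=\lambda^{2}(q_k+bq_{k+1})$ agrees with the paper's (\ref{e34})--(\ref{e35}). The fatal problem is your claim that the brackets $[e,\overline{e}]=0$ and $[f,\overline{f}]=0$ ``hold automatically''. They do not: evaluated on the generator they yield the paper's constraints (\ref{eqi2.4}) and (\ref{eqi2.5}), i.e.\ (after cancelling $\overline{\alpha},\overline{\beta}\neq 0$) $\alpha\left(h,\overline{h}\right)-\alpha\left(h-2,\overline{h}\right)=2\overline{\partial}\left(\overline{\alpha}\right)$ and $\beta\left(h,\overline{h}\right)-\beta\left(h+2,\overline{h}\right)=-2\overline{\partial}\left(\overline{\beta}\right)$, and the paper uses them essentially: via (\ref{eq22.9})--(\ref{eq22.10}) they give $\alpha_{2}=\overline{\partial}\left(\overline{\alpha}\right)$, $\beta_{2}=\overline{\partial}\left(\overline{\beta}\right)$, which drive the whole degree bookkeeping (\ref{eq2.10})--(\ref{eq2.12}) and the coefficient matching. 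From (E1)--(E3) alone your coefficient-pinning in part (3) is false, not merely unproved: the tuple
\begin{equation*}
\alpha\left(h,\overline{h}\right)=h,\qquad \beta\left(h,\overline{h}\right)=-\tfrac{1}{4}h,\qquad \overline{\alpha}\left(\overline{h}\right)=-2\overline{h},\qquad \overline{\beta}\left(\overline{h}\right)=\tfrac{1}{2}\overline{h}
\end{equation*}
satisfies (E1)--(E3) verbatim and has every degree you derive (${\rm deg}_{h}\alpha={\rm deg}_{h}\beta=1$, ${\rm deg}_{\overline{h}}\overline{\alpha}={\rm deg}_{\overline{h}}\overline{\beta}=1$), yet it is not of the asserted shape: $\overline{\alpha}=-2\overline{h}$ forces $\lambda=-4$, $b=0$, which would require $\overline{\beta}=\tfrac18\overline{h}$, not $\tfrac12\overline{h}$. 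It is excluded only by (\ref{eqi2.4}), which it violates ($\overline{\alpha}(\alpha(h,\overline{h})-\alpha(h-2,\overline{h})-2\overline{\partial}(\overline{\alpha}))=-12\overline{h}\neq 0$). So the step ``a leading-coefficient analysis of (E1)--(E3) shows the top $h$-coefficients are $\lambda/2$ and $-1/(2\lambda)$, after which \dots $\overline{\alpha}=\tfrac{\lambda}{2}(\overline{h}+b)$'' cannot be carried out inside your framework.

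The second gap is the one you flag yourself and then defer: the upgrade from ${\rm deg}_{h}\alpha,{\rm deg}_{h}\beta\le 1$ to equalities is announced as the bulk of the effort but never performed. The paper devotes its Case 1 of part (3) to exactly this, and again the discarded bracket is indispensable: with $\beta$ independent of $h$, equation (\ref{eqi2.5}) forces $\overline{\beta}$ constant, which is what launches the paper's argument. But be warned that the verification you propose (``no degenerate solution is compatible'') would in fact fail even with all five generator equations in play: taking $\lambda=1$, the data
\begin{equation*}
\alpha=-\tfrac{1}{2}h\overline{h}+\tfrac{1}{4}\overline{h}^{2}+\overline{h},\qquad \beta=1,\qquad \overline{\alpha}=-\tfrac{1}{4}\overline{h}^{2},\qquad \overline{\beta}=1
\end{equation*}
satisfies all of (\ref{eq2.1})--(\ref{eqi2.5}) (one checks $\alpha-\alpha\left(h+2,\overline{h}\right)=\overline{h}$, $\overline{\partial}\left(\alpha\left(h+2,\overline{h}\right)\right)=\tfrac{\overline{h}-h}{2}$, and $\alpha-\alpha\left(h-2,\overline{h}\right)=-\overline{h}=2\overline{\partial}\left(\overline{\alpha}\right)$), and since every defining bracket evaluated on a general $\gamma\left(h,\overline{h}\right)$ reduces to these five generator identities (by the same computation that underlies your (E1)--(E3)), this is a genuine rank-one $U\left(\overline{\mathfrak{h}}\right)$-free module with $e\cdot 1\neq 0\neq f\cdot 1$ and ${\rm deg}_{h}\beta=0$. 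The paper's Case 1 misses this family because, after (\ref{eq2.1}), (\ref{eq2.2}) and (\ref{eqi2.5}) pin only the $h$-coefficient $\alpha_{2}=-\overline{h}/(2\lambda)$, it writes $\alpha=a_{0,0}+a_{0,1}h+\left(a_{1,0}+a_{1,1}h\right)\overline{h}$, silently capping the $\overline{h}$-degree of the $h$-free part $\alpha_{1}$; with $\alpha_{1}$ unconstrained, (\ref{eq2.3}) is solvable by integration in $\overline{h}$ for every nonzero $\beta_{1}\left(\overline{h}\right)$. So your instinct that the degenerate exclusion is the delicate heart of part (3) is exactly right, but neither your plan nor the paper's Case 1, as written, disposes of it; any complete argument has to confront this degenerate family rather than rule it out.
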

\begin{proof}From Lemma \ref{lemma1.1} and the relations in $\g$, we have
\begin{eqnarray}&\!\!\!\!\!\!\!\!\!\!\!\!\!\!\!\!\!\!\!\!\!\!\!\!&
  \overline{h}=[\overline{e},f]\cdot 1=\overline{e}\cdot f\cdot 1-f\cdot\overline{e}\cdot 1=\overline{e}\cdot \beta\left(h,\overline{h}\right)-f\cdot \overline{\al}\left(\overline{h}\right)
\nonumber\\\!\!\!\!\!\!\!\!\!\!\!\!&\!\!\!\!\!\!\!\!\!\!\!\!\!\!\!&
\ \ \, =\left(\beta\left(h-2,\overline{h}\right)-\beta\left(h,\overline{h}\right)\right)\overline{\al}\left(\overline{h}\right)
-2\overline{\partial}\left(\overline{\al}\left(\overline{h}\right)\right)\overline{\beta}\left(\overline{h}\right),\label{eq2.1}\\
 &\!\!\!\!\!\!\!\!\!\!\!\!\!\!\!\!\!\!\!\!\!\!\!\!&
 \overline{h}=\left[\,e,\overline{f}\,\right]\cdot 1=e\cdot\overline{f}\cdot 1-\overline{f}\cdot e\cdot 1=e\cdot \overline{\beta}\left(\overline{h}\right)-\overline{f}\cdot \al\left(h,\overline{h}\right)
 \nonumber\\\!\!\!\!\!\!\!\!\!\!\!\!&\!\!\!\!\!\!\!\!\!\!\!\!\!\!\!&
 \ \ \, =\left(\al\left(h,\overline{h}\right)-\al\left(h+2,\overline{h}\right)\right)\overline{\beta}\left(\overline{h}\right)
 -2\overline{\partial}\left(\overline{\beta}\left(\overline{h}\right)\right)\overline{\al}\left(\overline{h}\right),\label{eq2.2} \\
  &\!\!\!\!\!\!\!\!\!\!\!\!\!\!\!\!\!\!\!\!\!\!\!\!&
  h=[e,f]\cdot 1=e\cdot f\cdot 1 -f\cdot e\cdot 1=e\cdot \beta\left(h,\overline{h}\right)-f\cdot \al\left(h,\overline{h}\right)
   \nonumber\\\!\!\!\!\!\!\!\!\!\!\!\!&\!\!\!\!\!\!\!\!\!\!\!\!\!\!\!&
\ \ \,=\beta\left(h-2,\overline{h}\right)\al\left(h,\overline{h}\right)-\al\left(h+2,\overline{h}\right)\beta\left(h,\overline{h}\right)
   \nonumber\\\!\!\!\!\!\!\!\!\!\!\!\!&\!\!\!\!\!\!\!\!\!\!\!\!\!\!\!&
 \ \ \ \ \ \ -2\overline{\partial}\left(\beta\left(h-2,\overline{h}\right)\right)\overline{\al}\left(\overline{h}\right)
  -2\overline{\partial}\left(\al\left(h+2,\overline{h}\right)\right)\overline{\beta}\left(\overline{h}\right),\label{eq2.3}\\
  &\!\!\!\!\!\!\!\!\!\!\!\!\!\!\!\!\!\!\!\!\!\!\!\!&
  0=[e,\overline{e}]\cdot 1=e\cdot \overline{e}\cdot 1 -\overline{e}\cdot e\cdot 1=e\cdot \overline{\al}\left(\overline{h}\right)-\overline{e}\cdot \al\left(h,\overline{h}\right)
   \nonumber\\\!\!\!\!\!\!\!\!\!\!\!\!&\!\!\!\!\!\!\!\!\!\!\!\!\!\!\!&
 \ \ \, =\overline{\al}\left(\overline{h}\right)\left(\al\left(h,\overline{h}\right)-\al\left(h-2,\overline{h}\right)
 -2\overline{\partial}\left(\overline{\al}\left(\overline{h}\right)\right)\right), \label{eqi2.4}\\
  &\!\!\!\!\!\!\!\!\!\!\!\!\!\!\!\!\!\!\!\!\!\!\!\!&
  0=\left[f,\overline{f}\,\right]\cdot 1=f\cdot \overline{f}\cdot 1 -\overline{f}\cdot f\cdot 1=f\cdot \overline{\beta}\left(\overline{h}\right)-\overline{f}\cdot \beta\left(h,\overline{h}\right)
  \nonumber\\\!\!\!\!\!\!\!\!\!\!\!\!&\!\!\!\!\!\!\!\!\!\!\!\!\!\!\!&
 \ \ \, =\overline{\beta}\left(\overline{h}\right)\left(\beta\left(h,\overline{h}\right)-\beta\left(h+2,\overline{h}\right)
      +2\overline{\partial}\left(\overline{\beta}\left(\overline{h}\right)\right)\right).\label{eqi2.5}
\end{eqnarray}

First, we consider the case $e\cdot 1=0$ and $f\cdot 1\neq0$. From (\ref{eq2.2}) and (\ref{eq2.3}), we obtain \begin{equation*}
        \overline{h}=-2\overline{\partial}\left(\overline{\beta}\left(\overline{h}\right)\right)\overline{\al}\left(\overline{h}\right),\ \ \
        h=-2\overline{\partial}\left(\beta\left(h-2,\overline{h}\right)\right)\overline{\al}\left(\overline{h}\right).
        \end{equation*}
Comparing the degree of $\overline{h}$ and $h$, respectively, in the above two equations, we see that ${\rm deg}_{h}\beta\left(h,\overline{h}\right)={\rm deg}_{\overline{h}}\beta\left(h,\overline{h}\right)=1,\ {\rm deg}_{\overline{h}}\overline{\al}\left(\overline{h}\right)=0,\ {\rm deg}_{\overline{h}}\overline{\beta}\left(\overline{h}\right)=2$.

Writing $\beta\left(h,\overline{h}\right)=d_{0,0}+d_{0,1}h+\left(d_{1,0}+d_{1,1}h\right)\overline{h}$, $\overline{\al}\left(\overline{h}\right)=\lambda$ and $\overline{\beta}\left(\overline{h}\right)=b_{2}\overline{h}^{2}+b_{1}\overline{h}+b_{0}$, for some $\lambda,b_{2}\in \C^{\times}$, $d_{1,i},d_{0,i},b_{i}\in \C$ with $i=0,1$ and $(d_{1,0}+d_{1,1}h)\left(d_{0,1}+d_{1,1}\overline{h}\right)\neq 0$, by (\ref{eq2.1})-(\ref{eqi2.5}), we have
\begin{eqnarray*}
\overline{h}&=&-2\lambda \left(d_{0,1}+d_{1,1}\overline{h}\right)=-2\lambda\left(2b_{2}\overline{h}+b_{1}\right),\ \ \ \ \ h=-2\lambda \left(d_{1,0}-2d_{1,1}+d_{1,1}h\right),\\
0&=&\left(b_{2}\overline{h}^{2}+b_{1}\overline{h}+b_{0}\right)\left(2b_{2}\overline{h}+b_{1}-d_{0,1}-d_{1,1}\overline{h}\right).
\end{eqnarray*}
This implies that $d_{0,1}=b_{1}=d_{1,0}-2d_{1,1}=0$ and $-2\lambda d_{1,1}=-4\lambda b_{2}=1$. Thus, we have $\beta\left(h,\overline{h}\right)=-\frac{1}{2\lambda}\left((h+2)\overline{h}+b\right)$, $\overline{\al}\left(\overline{h}\right)=\lambda$ and $\overline{\beta}\left(\overline{h}\right)=-\frac{1}{4\lambda}\left(\overline{h}^{2}+a\right)$, where $\lambda\in \C^{\times}$ and $b:=-2\lambda d_{0,0}$, $a:=-4\lambda b_{0}\in \C$.

Next, consider the case $e\cdot 1\neq0$ and $f\cdot 1=0$. By symmetry, we have ${\rm deg}_{h}\al\left(h,\overline{h}\right)={\rm deg}_{\overline{h}}\al\left(h,\overline{h}\right)=1,\ {\rm deg}_{\overline{h}}\overline{\al}\left(\overline{h}\right)=2,\ {\rm deg}_{\overline{h}}\overline{\beta}\left(\overline{h}\right)=0$. Applying the involution
\begin{equation*}
  \tau: e\mapsto -f,\ \ f\mapsto -e,\ \ h\mapsto -h,\ \ \overline{e}\mapsto -\overline{f},\ \ \overline{f}\mapsto -\overline{e}, \ \ \overline{h}\mapsto -\overline{h},
\end{equation*}
we get $\al\left(h,\overline{h}\right)=-\frac{1}{2\lambda}\left((h-2)\overline{h}+b\right)$, $\overline{\al}\left(\overline{h}\right)=-\frac{1}{4\lambda}\left(\overline{h}^{2}+a\right)$ and $\overline{\beta}\left(\overline{h}\right)=\lambda$, for some $\lambda\in \C^{\times}$ and $a,b\in \C$.

Now we consider the case  $e\cdot 1\neq0$ and $f\cdot 1\neq0$. Assume ${\rm deg}_{\overline{h}}\overline{\al}\left(\overline{h}\right)=r_{\overline{\al}}$, ${\rm deg}_{\overline{h}}\overline{\beta}\left(\overline{h}\right)=t_{\overline{\beta}}$, ${\rm deg}_{\overline{h}}\al\left(h,\overline{h}\right)=n_{\al}$ and ${\rm deg}_{\overline{h}}\beta\left(h,\overline{h}\right)=n_{\beta}$. Observing the  degree of $h$ in (\ref{eq2.1})-(\ref{eq2.3}), we get
\begin{equation*}
  {\rm deg}_{h}\al\left(h,\overline{h}\right)\leq 1, \ \ \ {\rm deg}_{h}\beta\left(h,\overline{h}\right)\leq 1,\ \ {\rm and}\ \ {\rm deg}_{h}\al\left(h,\overline{h}\right)+{\rm deg}_{h}\beta\left(h,\overline{h}\right)> 0.
\end{equation*}
Now we consider the following cases.

\noindent{\bf Case 1:} ${\rm deg}_{h}\al\left(h,\overline{h}\right)+{\rm deg}_{h}\beta\left(h,\overline{h}\right)=1$. By symmetry, we only consider the case ${\rm deg}_{h}\al\left(h,\overline{h}\right)=1$, ${\rm deg}_{h}\beta\left(h,\overline{h}\right)=0$. Comparing the degree of $\overline{h}$ in (\ref{eq2.1}), (\ref{eq2.2}) and (\ref{eqi2.5}), respectively, we obtain
\begin{equation*}
 r_{\overline{\al}}=2,\ \ t_{\overline{\beta}}=0,\ \ \  n_{\al}=1.
\end{equation*}
Write $\overline{\beta}\left(\overline{h}\right)=\lambda$, $\overline{\al}\left(\overline{h}\right)=c_{2}\overline{h}^{2}+c_{1}\overline{h}+c_{0}$ and $\al\left(h,\overline{h}\right)=a_{0,0}+a_{0,1}h+\left(a_{1,0}+a_{1,1}h\right)\overline{h}$, where $\lambda, c_{2} \in \C^{\times}$, $c_{i},a_{0,i}, a_{1,i}\in \C$ with $i=0,1$ and $(a_{1,0}+a_{1,1}h)\left(a_{0,1}+a_{1,1}\overline{h}\right)\neq0$. Then, according to (\ref{eq2.1}) and (\ref{eq2.2}), we can have $a_{0,1}=c_{1}=0$ and $2c_{2}=a_{1,1}=-\frac{1}{2\lambda}$. Next, applying these in (\ref{eq2.3}) and comparing the degree of $\overline{h}$ on both sides, we obtain a contradiction. Thus, Case 1 does not occur.

\noindent{\bf Case 2:} ${\rm deg}_{h}\al\left(h,\overline{h}\right)={\rm deg}_{h}\beta\left(h,\overline{h}\right)=1$. In the following, we write $\overline{\al}\left(\overline{h}\right)=\sum_{i=0}^{r_{\overline{\al}}}a_{i}\overline{h}^{i}$, $\overline{\beta}\left(\overline{h}\right)=\sum_{j=0}^{t_{\overline{\beta}}}b_{j}\overline{h}^{j}$, $\al\left(h,\overline{h}\right)=\al_{1}\left(\overline{h}\right)+\al_{2}\left(\overline{h}\right)h$ and $\beta\left(h,\overline{h}\right)=\beta_{1}\left(\overline{h}\right)+\beta_{2}\left(\overline{h}\right)h$, where $\al_{1}\left(\overline{h}\right),0\neq\al_{2}\left(\overline{h}\right),\beta_{1}\left(\overline{h}\right),0\neq\beta_{2}\left(\overline{h}\right)\in \C\left[\,\overline{h}\,\right]$ with 
 $a_{i},b_{j}\in \C$ for $i\in\{0,1,\cdots, r_{\overline{\al}}-1\}$, $j\in\{0,1,\cdots, t_{\overline{\beta}}-1\}$ and $a_{r_{\overline{\al}}}, b_{t_{\overline{\beta}}}\in \C^{\times}$. Then, using (\ref{eq2.1})-(\ref{eqi2.5}), we have
\begin{align}
  \overline{h}&=-2\beta_{2}\left(\overline{h}\right)\overline{\al}\left(\overline{h}\right)-2\overline{\ptl}\left(\overline{\al}\left(\overline{h}\right)\right)
 \overline{\beta}\left(\overline{h}\right),  \label{eq2.7}\\
  \overline{h}&=-2\al_{2}\left(\overline{h}\right)\overline{\beta}\left(\overline{h}\right)-2\overline{\ptl}\left(\overline{\beta}\left(\overline{h}\right)\right)
 \overline{\al}\left(\overline{h}\right), \label{eq2.8} \\
   h&=-2\left(2\kappa_{1}\left(\overline{h}\right)+\kappa_{2}\left(\overline{h}\right)
   +\kappa_{3}\left(\overline{h}\right)\right)h-\eta\left(\overline{h}\right),\label{eq2.9}\\
 0&=\left(\al_{2}\left(\overline{h}\right)-\overline{\partial}\left(\overline{\al}\left(\overline{h}\right)\right)\right)\overline{\al}\left(\overline{h}\right),\label{eq22.9}\\
 0&=\left(\beta_{2}\left(\overline{h}\right)-\overline{\partial}\left(\overline{\beta}\left(\overline{h}\right)\right)\right)\overline{\beta}\left(\overline{h}\right), \label{eq22.10}
\end{align}
where $\kappa_{1}\left(\overline{h}\right):=\al_{2}\left(\overline{h}\right)\beta_{2}\left(\overline{h}\right)$, $\kappa_{2}\left(\overline{h}\right):= \overline{\partial}\left(\beta_{2}\left(\overline{h}\right)\right)\overline{\al}\left(\overline{h}\right)$, $\kappa_{3}\left(\overline{h}\right):= \overline{\ptl}\left(\al_{2}\left(\overline{h}\right)\right)\overline{\beta}\left(\overline{h}\right)$ and $\eta\left(\overline{h}\right):=2\al_{1}\left(\overline{h}\right)\beta_{2}\left(\overline{h}\right)
+2\al_{2}\left(\overline{h}\right)\beta_{1}\left(\overline{h}\right)
+2\overline{\ptl}\left(\beta_{1}\left(\overline{h}\right)\right)\overline{\al}\left(\overline{h}\right)-4\overline{\ptl}\left(\beta_{2}\left(\overline{h}\right)\right)\overline{\al}\left(\overline{h}\right)
+2\overline{\ptl}\left(\al_{1}\left(\overline{h}\right)+2\al_{2}\left(\overline{h}\right)\right)\overline{\beta}\left(\overline{h}\right)$.

Writing $\al_{2}\left(\overline{h}\right)=\sum_{p=0}^{n_{\al}}k_{p}\overline{h}^{p}$, $\beta_{2}\left(\overline{h}\right)=\sum_{q=0}^{n_{\beta}}k'_{q}\overline{h}^{q}$ and comparing the degree of $\overline{h}$ and the degree of $h$, respectively, in (\ref{eq2.7})-(\ref{eq22.10}), we obtain
\begin{equation}\label{eq2.10}
  -2k'_{n_{\beta}}a_{r_{\overline{\al}}}-2a_{r_{\overline{\al}}}r_{\overline{\al}}b_{t_{\overline{\beta}}}=
  \left\{
  \begin{array}{ll}
  0\ \ &\mbox{if}\ \ n_{\beta}+r_{\overline{\al}}=r_{\overline{\al}}+t_{\overline{\beta}}-1> 1,\vspace{1ex}\\
  1\ \ &\mbox{if}\ \ n_{\beta}+r_{\overline{\al}}=r_{\overline{\al}}+t_{\overline{\beta}}-1=1,
 \end{array}
\right.
\end{equation}
\begin{equation}\label{eq2.11}
  -2k_{n_{\al}}b_{t_{\overline{\beta}}}-2b_{t_{\overline{\beta}}}t_{\overline{\beta}}a_{r_{\overline{\al}}}=
  \left\{
  \begin{array}{ll}
  0\ &\mbox{if}\ \ n_{\al}+t_{\overline{\beta}}=r_{\overline{\al}}+t_{\overline{\beta}}-1> 1,\vspace{1ex}\\
  1\ &\mbox{if}\ \ n_{\al}+t_{\overline{\beta}}=r_{\overline{\al}}+t_{\overline{\beta}}-1=1,
 \end{array}
\right.
\end{equation}
\begin{equation}\label{eq2.12}
  -k'_{n_{\beta}}(2k_{n_{\al}}+n_{\beta}a_{r_{\overline{\al}}})-k_{n_{\al}}n_{\al}b_{t_{\overline{\beta}}}=
  \left\{
  \begin{array}{ll}
  0\ \ \ &\mbox{if}\ \ \,n_{\al}+n_{\beta}> 0,\vspace{1ex}\\
  \frac{1}{2}\ \ \ &\mbox{if}\ \ \,n_{\al}+n_{\beta}=0.
 \end{array}
\right.
\end{equation}
If $n_{\beta}+r_{\overline{\al}}> 1$, then $n_{\al}+t_{\overline{\beta}}> 1$ and $n_{\al}+n_{\beta}> 0$. Thus, due to (\ref{eq2.10}), (\ref{eq2.11}) and (\ref{eq2.12}), we obtain $r_{\overline{\al}}+t_{\overline{\beta}}=0$, which implies $r_{\overline{\al}}=t_{\overline{\beta}}=0$, a contradiction.

Therefore, it follows that $n_{\beta}+r_{\overline{\al}}=r_{\overline{\al}}+t_{\overline{\beta}}-1=n_{\al}+t_{\overline{\beta}}=1$ and $n_{\al}+n_{\beta}=0$, which yields $n_{\al}=n_{\beta}=0$ and $r_{\overline{\al}}=t_{\overline{\beta}}=1$. Thus, we can write $\overline{\al}\left(\overline{h}\right)=a_{1}\overline{h}+a_{0}$, $\overline{\beta}\left(\overline{h}\right)=b_{1}\overline{h}+b_{0}$, $\al\left(h,\overline{h}\right)=c_{1}h+\al_{1}\left(\overline{h}\right)$ and $\beta\left(h,\overline{h}\right)=d_{1}h+\beta_{1}\left(\overline{h}\right)$, for some $a_{1},b_{1},c_{1},d_{1}\in \C^{\times}$, $a_{0},b_{0}\in \C$ and $\al_{1}\left(\overline{h}\right),\beta_{1}\left(\overline{h}\right)\in \C\left[\,\overline{h}\,\right]$. According  to (\ref{eq2.7})-(\ref{eq22.10}), we have
\begin{align*}
  \overline{h} &= -2a_{1}(d_{1}+b_{1})\overline{h}-2(d_{1}a_{0}+a_{1}b_{0})= -2b_{1}(c_{1}+a_{1})\overline{h}-2(c_{1}b_{0}+b_{1}a_{0}), \\
  h &= -4d_{1}c_{1}h-2\left(d_{1}\al_{1}\left(\overline{h}\right)+c_{1}\beta_{1}\left(\overline{h}\right)
  +\overline{\partial}\left(\al_{1}\left(\overline{h}\right)\right)\overline{\beta}\left(\overline{h}\right)
  +\overline{\partial}(\beta_{1}\left(\overline{h}\right))\overline{\al}\left(\overline{h}\right)\right),\\
  0&=\left(c_{1}-a_{1}\right)\left(a_{1}\overline{h}+a_{0}\right)=\left(b_{1}-d_{1}\right)\left(b_{1}\overline{h}+b_{0}\right),
\end{align*}
which implies that $a_{1}=c_{1}=\frac{\lambda}{2}$, $b_{1}=d_{1}=-\frac{1}{2\lambda}$, $b_{0}=\frac{1}{\lambda^{2}}a_{0}$, $\frac{1}{2\lambda}\al_{1}\left(\overline{h}\right)-\frac{\lambda}{2}\beta_{1}\left(\overline{h}\right)
-\overline{\al}\left(\overline{h}\right)\overline{\partial}\left(\beta_{1}\left(\overline{h}\right)\right)-\overline{\beta}\left(\overline{h}\right)\overline{\partial}\left(\al_{1}\left(\overline{h}\right)\right)=0$.
Thus, we obtain \begin{align*}
             \al\left(h,\overline{h}\right)&=\frac{\lambda}{2}h+\al_{1}\left(\overline{h}\right),\ \ \ \ \ \ \ \ \ \,
             \overline{\al}\left(\overline{h}\right)=\frac{\lambda}{2}\left(\overline{h}+b\right),\\
              \beta\left(h,\overline{h}\right)&=-\frac{1}{2\lambda}h+\beta_{1}\left(\overline{h}\right),\ \ \ \ \ \ \overline{\beta}\left(\overline{h}\right)=-\frac{1}{2\lambda}\left(\overline{h}-b\right),
           \end{align*}
where $b:=\frac{2}{\lambda}a_{0}\in \C$ and $\al_{1}\left(\overline{h}\right)$, $\beta_{1}\left(\overline{h}\right)$ satisfy
\begin{equation}\label{e34}
\frac{1}{\lambda}\al_{1}\left(\overline{h}\right)-\lambda\beta_{1}\left(\overline{h}\right)
-\lambda\left(\overline{h}+b\right)\overline{\partial}\left(\beta_{1}\left(\overline{h}\right)\right)
+\frac{1}{\lambda}\left(\overline{h}-b\right)\overline{\partial}\left(\al_{1}\left(\overline{h}\right)\right)=0.
\end{equation}
From (\ref{e34}), we can write $\al_{1}\left(\overline{h}\right)=\sum_{i=0}^{m}p_{i}\overline{h}^{i}$ and $\beta_{1}\left(\overline{h}\right)=\sum_{j=0}^{m}q_{j}\overline{h}^{j}$ with $m\in \Z_{+}$ and $p_{i},q_{j}\in \C$ for $i,j\in \{0,\ldots, m\}$. Thus, according to (\ref{e34}), we obtain
\begin{equation}\label{e35}
\sum_{i=0}^{m}\frac{1}{\lambda}p_{i}(i+1)\overline{h}^{i}-\sum_{j=0}^{m}\lambda q_{j}(1+j)\overline{h}^{j}
-\sum_{i=0}^{m}\frac{b}{\lambda}p_{i}i\overline{h}^{i-1}-\sum_{j=0}^{m}\lambda bq_{j}j\overline{h}^{j-1}=0.
\end{equation}
Now we consider the following two cases.

\noindent{\bf Case 1:} $b\neq 0$. Comparing the coefficients of $\overline{h}^{i}$, for $i\in \{0,\ldots,m\}$, in (\ref{e35}), we have
\begin{align*}
       \frac{1}{\lambda}p_{0}-\lambda q_{0}-\lambda bq_{1}- \frac{b}{\lambda}p_{1}&= 0, \\
       \frac{1}{\lambda}p_{1}-\lambda q_{1}-\lambda bq_{2}- \frac{b}{\lambda}p_{2}&= 0, \\
       \cdots \cdots \\
        \frac{1}{\lambda}p_{m}-\lambda q_{m}&=0.
     \end{align*}
Thus, $\left(
\begin{array}{c}
p_{0} \\
p_{1} \\
p_{2} \\
\vdots \\
p_{m} \\
\end{array}
\right)=\lambda^{2}A\left(
                        \begin{array}{c}
                          q_{0} \\
                          q_{1} \\
                          q_{2} \\
                          \vdots \\
                          q_{m} \\
                        \end{array}
                      \right)$, where $A=\left(
                       \begin{array}{ccccc}
                         1 & 2b & 2b^{2} & \cdots & 2b^{m} \\
                         0 & 1 & 2b & \cdots & 2b^{m-1} \\
                         0 & 0 & 1 &\cdots  & 2b^{m-2} \\
                         \vdots & \vdots & \vdots& \ddots & \vdots \\
                         0 & 0 & 0 & \cdots & 1 \\
                       \end{array}
                     \right)$.

\noindent{\bf Case 2:} $b=0$. Due to (\ref{e35}), we get
\begin{equation}\label{e36}
\sum_{i=0}^{m}\frac{1}{\lambda}p_{i}(i+1)\overline{h}^{i}-\sum_{j=0}^{m}\lambda q_{j}(1+j)\overline{h}^{j}=0.
\end{equation}
Comparing the coefficients of $\overline{h}^{i}$, for $i\in \{0,\ldots,m\}$, in (\ref{e36}), we see that $p_{i}=\lambda^{2}q_{i}$ for all $i$, which implies that $\al_{1}\left(\overline{h}\right)=\lambda^{2}\beta_{1}\left(\overline{h}\right)$.

Thus, from the above two cases, the proof is completed.
\end{proof}

\subsubsection{Proof of Theorem \ref{prop-3.2a}} 
First, we prove Theorem \ref{prop-3.2a} (i). Here we prove the result for $\Gamma(\lambda,a,b)$. The proof for $\Theta(\lambda,a,b)$ is similar. Let $\Gamma_{1}$ be a nonzero submodule of $\Gamma(\lambda,a,b)$ and let $\gamma\left(h,\overline{h}\right)$ be a nonzero polynomial in $\Gamma_{1}$. From the action of $e$ on $\gamma\left(h,\overline{h}\right)$, there exists a nonzero polynomial $\gamma_{1}(h)\in \C[h]$ such that $\gamma_{1}(h)\in \Gamma_{1}$. Let $N(\gamma_{1}(h))$ be the set of zeros of $\gamma_{1}(h)$, this is a finite subset of $\C$. According to the definition of the module structure, we see that
\begin{equation*}
  N(\overline{e}\cdot\gamma_{1}(h))=N(\gamma_{1}(h))+2,
\end{equation*}
and, inductively, we have $N\left(\overline{e}^{k}\cdot\gamma_{1}(h)\right)=N(\gamma_{1}(h))+2k$. Now take $k$ large enough such that
\begin{equation*}
\left(N(\gamma_{1}(h))+2k\right)\cap N(\gamma_{1}(h))=\emptyset.
\end{equation*}
Then $\gamma_{1}(h)$ and $\overline{e}^{k}\cdot\gamma_{1}(h)$ are relatively prime elements of $\Gamma_{1}$. Thus we can find $\kappa_{1}(h)$, $\kappa_{2}(h)\in \C[h]$ such that
\begin{equation*}
  \kappa_{1}(h)\gamma_{1}(h)+\kappa_{2}(h)\overline{e}^{k}\cdot\gamma_{1}(h)=1\in \Gamma_{1}.
\end{equation*}
Therefore, we get $\Gamma_{1}=\C\left[\,h,\overline{h}\,\right]$, which implies that $\Gamma(\lambda,a,b)$ is a simple $\g$-module.

Next, we prove that $\Gamma(\lambda,a,b)\cong \Gamma(\lambda',a',b')$ if and only if $\lambda=\lambda'$, $a=a'$ and $b=b'$. The sufficiency is obvious. Suppose that $\Gamma(\lambda,a,b)\cong \Gamma(\lambda',a',b')$ as $\g$-modules and let $\phi:\Gamma(\lambda,a,b)\longrightarrow \Gamma(\lambda',a',b')$ be an isomorphism of $\g$-modules.

For any $i,j\in \Z_{+}$, from $\phi\left(h^{i}\overline{h}^{j}\cdot 1\right)=h^{i}\overline{h}^{j}\phi\left(1\right)$, we obtain that $\phi\left(\C\left[\,h,\overline{h}\,\right]\right)=\C\left[h,\overline{h}\,\right]\phi\left(1\right)$. Since $\phi$ is an isomorphism, it follows that $\phi(1)\in \C^{\times}$. Taking $\kappa\left(h,\overline{h}\right)\in \Gamma(\lambda,a,b)$, we have
\begin{align*}
 \phi\big(\overline{e}\cdot \kappa\left(h,\overline{h}\right)\big) &= \phi\left(\lambda \kappa\left(h-2,\overline{h}\right)\right)= \lambda\kappa\left(h-2,\overline{h}\right)\phi(1),\\
  \overline{e}\cdot \phi\left(\kappa\left(h,\overline{h}\right)\right) &=\lambda'\kappa\left(h-2,\overline{h}\right)\phi(1),\\
  \phi\left(\overline{f}\cdot \kappa\left(h,\overline{h}\right)\right) &=-\frac{1}{4\lambda}\phi\left( \left(\overline{h}^{2}+a\right)\kappa\left(h+2,\overline{h}\right)\right)=-\frac{1}{4\lambda} \left(\overline{h}^{2}+a\right)\kappa\left(h+2,\overline{h}\right)\phi(1),\\
  \overline{f}\cdot \phi\left(\kappa\left(h,\overline{h}\right)\right) &=-\frac{1}{4\lambda'}\left(\overline{h}^{2}+a'\right)\kappa\left(h+2,\overline{h}\right)\phi\left(1\right),
 \end{align*}
 which implies that $\lambda=\lambda'$ and $a=a'$. Moreover, for any $\kappa\left(h,\overline{h}\right)\in \Gamma(\lambda,a,b)$, we get
 \begin{align*}
  \phi\left(f\cdot \kappa\left(h,\overline{h}\right)\right) 
  &=-\frac{1}{2\lambda}\left(\left((h+2)\overline{h}+b\right)\kappa\left(h+2,\overline{h}\right)
  +\left(\overline{h}^{2}+a\right)\overline{\partial}\left(\kappa\left(h+2,\overline{h}\right)\right)\right)\phi\left( 1\right),\\
  f\cdot\phi\left(\kappa\left(h,\overline{h}\right)\right) 
  &=-\frac{1}{2\lambda}\left(\left((h+2)\overline{h}+b'\right)\kappa\left(h+2,\overline{h}\right)+\left(\overline{h}^{2}
  +a\right) \overline{\partial}\left(\kappa\left(h+2,\overline{h}\right)\right)\right)\phi\left( 1\right),
\end{align*}
which implies that $b=b'$. This completes the proof of Theorem \ref{prop-3.2a} (i). 

Now we give the proof of Theorem \ref{prop-3.2a} (ii). Suppose that $\Omega_{1}$ is a nonzero submodule of $\Omega\left(\lambda,b,\beta_{1}\left(\overline{h}\right)\right)$. Let $\gamma\left(h,\overline{h}\right)$ be a nonzero polynomial in $\Omega_{1}$ with the smallest degree of $h$ in $\Omega_{1}$. By the action $\overline{e}$ on $\gamma\left(h,\overline{h}\right)$, we have
\begin{equation*}
  \left(\frac{\lambda}{2}\left(\overline{h}+b\right)-\overline{e}\right)\cdot\gamma\left(h,\overline{h}\right)=\frac{\lambda}{2}\left(\overline{h}+b\right)
  \left(\gamma\left(h,\overline{h}\right)-\gamma\left(h-2,\overline{h}\right)\right)\in\Omega_{1}.
\end{equation*}
This implies $\gamma\left(h,\overline{h}\right)\in \C\left[\,\overline{h}\,\right]$ by the choice of $\gamma\left(h,\overline{h}\right)$. Denote $\gamma\left(\overline{h}\right)=\gamma\left(h,\overline{h}\right)$ and assume $\mbox{deg}_{\overline{h}}\gamma\left(\overline{h}\right)=m$. If $m=0$, we are done. Thus, we assume $m>0$. Now we consider the following two cases.

\noindent{\bf Case 1:} $b\neq0$. Applying $f$ on $\gamma\left(\overline{h}\right)$, we obtain
\begin{equation*}
  \left(\overline{h}-b\right)\overline{\partial}\left(\gamma\left(\overline{h}\right)\right)=-\lambda f\cdot\gamma\left(\overline{h}\right)-\lambda\left(\frac{1}{2\lambda}h-\beta_{1}\left(\overline{h}\right)\right)\gamma\left(\overline{h}\right)
  \in \Omega_{1}.
\end{equation*}
Next, applying $f$ on $\left(\overline{h}-b\right)\overline{\partial}\left(\gamma\left(\overline{h}\right)\right)$, we see that $\left(\overline{h}-b\right)^{2}\overline{\partial}^{2}\left(\gamma\left(\overline{h}\right)\right)\in \Omega_{1}$. Inductively, we have $\left(\overline{h}-b\right)^{m}\in \Omega_{1}$. Due to the action of $e$ on $\left(\overline{h}-b\right)^{m}$, it follows that
\begin{equation*}
  \left(\overline{h}+b\right)\left(\overline{h}-b\right)^{m-1}=-\frac{1}{m\lambda}e\cdot \left(\overline{h}-b\right)^{m}+\frac{1}{m\lambda}\left(\frac{\lambda}{2}h+\al_{1}\left(\overline{h}\right)\right)\left(\overline{h}-b\right)^{m}\in \Omega_{1}.
\end{equation*}
Thus, we get $\left(\overline{h}-b\right)^{m-1}=\frac{1}{2b}\left(\left(\overline{h}+b\right)\left(\overline{h}-b\right)^{m-1}-\left(\overline{h}-b\right)^{m}\right)\in \Omega_{1}$. Then, applying $e$ to $\left(\overline{h}-b\right)^{m-1}$, it follows that $\left(\overline{h}-b\right)^{m-2}\in \Omega_{1}$. Continuing this procedure finitely many times, we obtain $1\in \Omega_{1}$. Therefore, it follows that $\Omega_{1}=\Omega\left(\lambda,b,\beta_{1}\left(\overline{h}\right)\right)$, which means that $\Omega\left(\lambda,b,\beta_{1}\left(\overline{h}\right)\right)$ is a simple $\g$-module.

\noindent{\bf Case 2:} $b=0$. It is clear that, for any $i\in \Z_{+}$, the space $\overline{h}^{i}\C\left[\,h,\overline{h}\,\right]$ is a submodule of $\Omega\left(\lambda,0,\beta_{1}\left(\overline{h}\right)\right)$, for $\lambda\in \C^{\times}$ and $\beta_{1}\left(\overline{h}\right)\in \C\left[\,\overline{h}\,\right]$. Moreover, the quotient module $\Omega_{2}:=\overline{h}^{i}\C\left[\,h,\overline{h}\,\right]/\overline{h}^{i+1}\C\left[\,h,\overline{h}\,\right]$ is simple if and only if $2\left(-\lambda\beta_{1}\left(\overline{h}\right)+i\right)\notin \Z_{+}$, for any $i\in \Z_{+}$. Indeed, for any $0\neq \gamma\left(h,\overline{h}\right)\in \Omega_{2}$, we have
\begin{align*}
\overline{e}\cdot\gamma\left(h,\overline{h}\right)&\equiv0\ \ \ \mbox{mod}\ \  \overline{h}^{i+1}\C\left[\,h,\overline{h}\,\right],\\
 \overline{f}\cdot \gamma\left(h,\overline{h}\right)&\equiv0\ \ \ \mbox{mod}\ \  \overline{h}^{i+1}\C\left[\,h,\overline{h}\,\right],\\
e\cdot\gamma\left(h,\overline{h}\right)&=\lambda\left(\frac{h}{2}
+\frac{1}{\lambda}\al_{1}\left(\overline{h}\right)-i\right)\gamma\left(h-2,\overline{h}\right),\\
f\cdot\gamma\left(h,\overline{h}\right)&=-\frac{1}{\lambda}\left(\frac{h}{2}-\lambda\beta_{1}\left(\overline{h}\right)+i\right)\gamma\left(h+2,\overline{h}\right).
\end{align*}
Hence, $\Omega_{2}\cong \Delta_{1}\left(-\frac{1}{\lambda},-\lambda\beta_{1}\left(\overline{h}\right)+i\right)$ as $\mathfrak{sl}_{2}$-modules. Thus, from Theorem \ref{theo1.1}, we see that $\Omega_{2}$ is simple if and only if $2\left(-\lambda\beta_{1}\left(\overline{h}\right)+i\right)\notin \Z_{+}$.

Next, we prove that $\Omega\left(\lambda,b,\beta_{1}\left(\overline{h}\right)\right)\cong \Omega(\lambda',b',\beta'_{1}\left(\overline{h})\right)$ if and only if $\lambda=\lambda'$, $b=b'$ and $\beta_{1}\left(\overline{h}\right)=\beta'_{1}\left(\overline{h}\right)$, where $\lambda,\lambda'\in \C^{\times}$, $b,b'\in \C$ and $\beta_{1}\left(\overline{h}\right),\beta'_{1}\left(\overline{h}\right)\in \C\left[\,\overline{h}\,\right]$. The sufficiency is clear. Suppose that $\varphi:\Omega\left(\lambda,b,\beta_{1}\left(\overline{h}\right)\right)\longrightarrow \Omega\left(\lambda',b',\beta'_{1}\left(\overline{h}\right)\right)$ is an isomorphism of $\g$-modules. Let $\gamma\left(h,\overline{h}\right)\in \C\left[\,h,\overline{h}\,\right]$. Then, we have
\begin{align*}
  \varphi\left(\overline{e}\cdot \gamma\left(h,\overline{h}\right)\right) &=\varphi\left(\frac{\lambda}{2}\left(\overline{h}+b\right)\gamma\left(h-2,\overline{h}\right)\right)
  =\frac{\lambda}{2}\left(\overline{h}+b\right)\gamma\left(h-2,\overline{h}\right)\varphi(1), \\
 \overline{e}\cdot\varphi\left(\gamma\left(h,\overline{h}\right)\right) &=\frac{\lambda'}{2}\left(\overline{h}+b'\right)\varphi\left(\gamma\left(h-2,\overline{h}\right)\right)
 =\frac{\lambda'}{2}\left(\overline{h}+b'\right)\gamma\left(h-2,\overline{h}\right)\varphi\left(1\right),
\end{align*}
which implies that $\lambda=\lambda'$ and $b=b'$. Moreover, due to the action of $f$ on $\C\left[\,h,\overline{h}\,\right]$, we get
\begin{align*}
  \varphi\left(f\cdot \gamma\left(h,\overline{h}\right)\right) &= -\varphi\left(\left(\frac{h}{2\lambda}-\beta_{1}\left(\overline{h}\right)\right)\gamma\left(h+2,\overline{h}\right)
  +\frac{1}{\lambda}\left(\overline{h}-b\right)\overline{\partial}\left(\gamma\left(h+2,\overline{h}\right)\right)\right)\\
  &=-\left(\frac{h}{2\lambda}-\beta_{1}\left(\overline{h}\right)\right)\gamma\left(h+2,\overline{h}\right)\varphi(1)
  -\frac{1}{\lambda}\left(\overline{h}-b\right)\overline{\partial}\left(\gamma\left(h+2,\overline{h}\right)\right)\varphi(1), \\
  f\cdot\varphi\left(\gamma\left(h,\overline{h}\right)\right)&=-\left(\frac{h}{2\lambda'}
  -\beta'_{1}\left(\overline{h}\right)\right)\gamma\left(h+2,\overline{h}\right)\varphi(1)
   -\frac{1}{\lambda'}\left(\overline{h}-b'\right)\overline{\partial}\left(\gamma\left(h+2,\overline{h}\right)\right)\varphi(1),
\end{align*}
yielding that $\beta_{1}\left(\overline{h}\right)=\beta'_{1}\left(\overline{h}\right)$, as desired. This completes the proof.\hfill\qed
\section{Three families of weight modules over $\g$}
In this section, we investigate three families of weight modules over $\g$ associated to the non-weight modules $\Gamma(\lambda,a,b)$,  $\Theta(\lambda,a,b)$ and $\Omega\left(\lambda,b,\beta_{1}\left(\overline{h}\right)\right)$, respectively, which are defined in Definition $\ref{defi01}$. We also compare these modules to each other using  Mathieu's twisting functors from \cite{M}.
\subsection{Weight modules associated to $\Gamma(\lambda,a,b)$}
 In the following, we denote $\delta_{l,r}$ the Kronecker delta for any $l,r\in \Z_{+}$. Let $\al,\b\in \C$, $k\in \Z$, $s\in \N$ and $\eta_{\al+2k,\b^{s}}:\C\left[\,h,\overline{h}\,\right]\longrightarrow \C$ be the linear transformation defined by
\begin{equation*}
  \eta_{\al+2k,\b^{s}}\left((h-\al-2k)^{i}\left(\overline{h}-\b\right)^{j}\right)=\delta_{i,0}\delta_{j,s-1}(s-1)!.
\end{equation*}
In particular, $\eta_{\al+2k,\b}$ (in the case $s=1$) is an algebra homomorphism, for any $k\in \Z$. For convenience, in what follows, we denote $\al+2k$ by $\al_{k}$, for $k\in \Z$.

For $\al,\b, a, b \in \C$ and $\lambda\in \C^{\times}$, let $M_{\al,\b}^{\lambda,a,b}$ be the submodule of $\Gamma(\lambda,a,b)^{*}$ spanned by a linear independent collection $\{\eta_{\al_{k},\b^{s}}\,|\,k\in \Z, s\in \N\}$. Then $\{\eta_{\al_{k},\b^{s}}\,|\,k\in \Z, s\in \N\}$ is, in fact, a basis of $M_{\al,\b}^{\lambda,a,b}$. Then $M_{\al,\b}^{\lambda,a,b}=\bigoplus_{k\in \Z}M_{\al,\b}^{k}$, where $M_{\al,\b}^{k}:=\mbox{span}\{\eta_{\al_{k},\b^{s}}\,|\,s\in \N\}$, for $k\in \Z$. The following proposition gives a precise action of $\g$ on $M_{\al,\b}^{\lambda,a,b}$.
\begin{prop}\label{defi3.1}Let $\lambda\in \C^{\times}$ and $\al,\b,a,b\in \C$. Then, for any $k\in \Z$ and $s\in \N$, the space
$M_{\al,\b}^{\lambda,a,b}$ is a $\g$-module with the action defined as follows:
\begin{eqnarray}&\!\!\!\!\!\!\!\!\!\!\!\!\!\!\!\!\!\!\!\!\!\!\!\!&
  \overline{e}\cdot \eta_{\al_{k},\b^{s}}= -\lambda\eta_{\al_{k-1},\b^{s}},\ \ \ \ \ \ \ \ \ \ \ \ \ \ \ e\cdot \eta_{\al_{k},\b^{s}}= 2\lambda\eta_{\al_{k-1},\b^{s+1}},\label{vz4.11}\\
 &\!\!\!\!\!\!\!\!\!\!\!\!\!\!\!\!\!\!\!\!\!\!\!\!&
 h\cdot \eta_{\al_{k},\b^{s}} =-\al_{k}\eta_{\al_{k},\b^{s}}, \ \ \ \ \ \ \ \ \ \ \ \ \ \ \ \overline{h}\cdot \eta_{\al_{k},\b^{s}} =-\b\eta_{\al_{k},\b^{s}}-(s-1)\eta_{\al_{k},\b^{s-1}},\label{vz14.4}\\
 &\!\!\!\!\!\!\!\!\!\!\!\!\!\!\!\!\!\!\!\!\!\!\!\!&
 \overline{f}\cdot \eta_{\al_{k},\b^{s}} = \frac{(s-2)(s-1)}{4\lambda}\eta_{\al_{k+1},\b^{s-2}}+\frac{(s-1)\b}{2\lambda}\eta_{\al_{k+1},\b^{s-1}}
 +\frac{\b^{2}+a}{4\lambda}\eta_{\al_{k+1},\b^{s}},\label{vz14.5}\\
 &\!\!\!\!\!\!\!\!\!\!\!\!\!\!\!\!\!\!\!\!\!\!\!\!&
 f\cdot\eta_{\al_{k},\b^{s}}= \frac{(s-1)(\al_{k}+s)}{2\lambda}\eta_{\al_{k+1},\b^{s-1}}+\frac{\al_{k+s}\b+b}{2\lambda}\eta_{\al_{k+1},\b^{s}}
  +\frac{\b^{2}+a}{2\lambda}\eta_{\al_{k+1},\b^{s+1}}.\label{vz14.6}
\end{eqnarray}\end{prop}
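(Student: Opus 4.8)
The plan is to compute the contragredient $\g$-action on each vector $\eta_{\al_{k},\b^{s}}$ directly from the module structure of $\Gamma(\lambda,a,b)$ recorded in Definition \ref{defi01}, and to read off that the result is always a finite $\C$-linear combination of the $\eta_{\al_{k'},\b^{s'}}$. This simultaneously shows that the span $M_{\al,\b}^{\lambda,a,b}$ is closed under $\g$, hence is a submodule of the $\g$-module $\Gamma(\lambda,a,b)^{*}$, and establishes the five displayed formulas. The only inputs are the defining relation $\left(y\cdot\varphi\right)(v)=-\varphi(y\cdot v)$ and the action of $e,\overline{e},f,\overline{f}$ and $\overline{\mathfrak{h}}$ on $\C\left[\,h,\overline{h}\,\right]$.

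The device that makes every step mechanical is the observation that, for $P\in\C\left[\,h,\overline{h}\,\right]$,
\begin{equation*}
\eta_{\al_{k},\b^{s}}(P)=\left.\overline{\partial}^{\,s-1}P\right|_{h=\al_{k},\,\overline{h}=\b},
\end{equation*}
which follows at once from $\eta_{\al_{k},\b^{s}}\!\left((h-\al_{k})^{i}(\overline{h}-\b)^{j}\right)=\delta_{i,0}\delta_{j,s-1}(s-1)!$. With this reading the two structural features of $\Gamma(\lambda,a,b)$ become transparent: a shift $h\mapsto h\pm2$ inside the $\Gamma$-action corresponds to evaluating at $\al_{k\mp1}$, i.e.\ to the index shift $k\mapsto k\mp1$ on the functionals, while multiplication by a power of $\overline{h}$ together with differentiation $\overline{\partial}$ is handled by the Leibniz rule and moves $s$ by a bounded amount.

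First I would dispatch the easy generators by applying $-\eta_{\al_{k},\b^{s}}(y\cdot P)$ to the relevant formula in Definition \ref{defi01}: the action of $\overline{e}$ gives $-\lambda\eta_{\al_{k-1},\b^{s}}$; multiplication by $h$ gives $-\al_{k}\eta_{\al_{k},\b^{s}}$; multiplication by $\overline{h}$ gives, via $\overline{\partial}^{\,s-1}(\overline{h}P)=\overline{h}\,\overline{\partial}^{\,s-1}P+(s-1)\overline{\partial}^{\,s-2}P$, the expression $-\b\eta_{\al_{k},\b^{s}}-(s-1)\eta_{\al_{k},\b^{s-1}}$; and the action of $e$, carrying one extra $\overline{h}$-derivative, yields $2\lambda\eta_{\al_{k-1},\b^{s+1}}$. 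These reproduce \eqref{vz4.11} and \eqref{vz14.4}. For $\overline{f}$ one expands $\overline{\partial}^{\,s-1}$ against the quadratic $\overline{h}^{2}+a$ by Leibniz, leaving exactly three surviving terms which, after the prefactor $1/4\lambda$, give the coefficients $\tfrac{\b^{2}+a}{4\lambda}$, $\tfrac{(s-1)\b}{2\lambda}$, $\tfrac{(s-1)(s-2)}{4\lambda}$ at $\eta_{\al_{k+1},\b^{s}},\eta_{\al_{k+1},\b^{s-1}},\eta_{\al_{k+1},\b^{s-2}}$; this is \eqref{vz14.5}.

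The one genuinely delicate case is $f$, and I expect it to be the main obstacle, since the $\Gamma$-action of $f$ is a sum of two terms, $-\tfrac{1}{2\lambda}((h+2)\overline{h}+b)P(h+2,\overline{h})$ and $-\tfrac{1}{2\lambda}(\overline{h}^{2}+a)\overline{\partial}P(h+2,\overline{h})$, each of which feeds into the coefficients of $\eta_{\al_{k+1},\b^{s}}$ and $\eta_{\al_{k+1},\b^{s-1}}$. Applying Leibniz to each piece and evaluating at $h=\al_{k}$, $\overline{h}=\b$ (so that $(h+2)|_{h=\al_{k}}=\al_{k+1}$), the first piece contributes, up to the common factor $1/2\lambda$, the terms $(\al_{k+1}\b+b)\eta_{\al_{k+1},\b^{s}}+(s-1)\al_{k+1}\eta_{\al_{k+1},\b^{s-1}}$, and the second contributes $(\b^{2}+a)\eta_{\al_{k+1},\b^{s+1}}+2(s-1)\b\,\eta_{\al_{k+1},\b^{s}}+(s-1)(s-2)\eta_{\al_{k+1},\b^{s-1}}$. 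The final step is the bookkeeping identities $\al_{k+1}+2(s-1)=\al_{k}+2s=\al_{k+s}$ and $\al_{k+1}+(s-2)=\al_{k}+s$, which collapse the two contributions to the $\b^{s}$- and $\b^{s-1}$-coefficients $\tfrac{\al_{k+s}\b+b}{2\lambda}$ and $\tfrac{(s-1)(\al_{k}+s)}{2\lambda}$; together with the $\b^{s+1}$-term this is precisely \eqref{vz14.6}. Since every output is a finite combination of basis functionals, closedness and the full module structure follow.
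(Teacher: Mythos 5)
Your proof is correct and takes essentially the same route as the paper: both verify the formulas generator by generator by computing the contragredient action $(y\cdot\varphi)(v)=-\varphi(y\cdot v)$ directly from the $\Gamma(\lambda,a,b)$-structure of Definition \ref{defi01}. Your closed form $\eta_{\al_{k},\b^{s}}(P)=\overline{\partial}^{\,s-1}P\big|_{h=\al_{k},\,\overline{h}=\b}$ combined with the Leibniz rule is merely a tidier bookkeeping for the paper's expansion of products such as $\left(\overline{h}^{2}+a\right)\left(\overline{h}-\b\right)^{j}$ in powers of $\left(\overline{h}-\b\right)$, and all your coefficients --- including the delicate $f$-case, where the two contributions collapse via $\al_{k+1}+2(s-1)=\al_{k+s}$ and $\al_{k+1}+s-2=\al_{k}+s$ --- match (\ref{vz4.11})--(\ref{vz14.6}) exactly.
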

\begin{proof}Let $i,j\in \Z_{+}$, $k\in \Z$, $s\in \N$ and $\gamma\left(h,\overline{h}\right)=:\left(h-\al_{k}\right)^{i}\left(\overline{h}-\b\right)^{j}$. Then, according to Definition $\ref{defi01}$, we obtain that
\begin{align*}
    \overline{e} \cdot\eta_{\al_{k},\b^{s}}\left(\gamma\left(h+2,\overline{h}\right)\right)
       &= -\lambda \eta_{\al_{k},\b^{s}}\left(\left(h-\al_{k}\right)^{i}\left(\overline{h}-\b\right)^{j}\right)=-\lambda\delta_{i,0}\delta_{j,s-1} (s-1)!,\\
   e\cdot \eta_{\al_{k},\b^{s}}\left(\gamma\left(h+2,\overline{h}\right)\right)
       &=2\lambda j\eta_{\al_{k},\b^{s}}\left(\left(h-\al_{k}\right)^{i}\left(\overline{h}-\b\right)^{j-1}\right)  =2\lambda\delta_{i,0}\delta_{j,s}s!.
\end{align*}
Thus we get that $\overline{e}\cdot \eta_{\al_{k},\b^{s}}= -\lambda\eta_{\al_{k-1},\b^{s}}$ and $e\cdot \eta_{\al_{k},\b^{s}}= 2\lambda\eta_{\al_{k-1},\b^{s+1}}$. Next, from the action of $h,\overline{h}$ on $\Gamma(\lambda,a,b)$, respectively, we have
\begin{align*}
(h+\al_{k}) \cdot\eta_{\al_{k},\b^{s}}\left(\gamma\left(h,\overline{h}\right)\right)
       &= -\eta_{\al_{k},\b^{s}}\left(\left(h-\al_{k}\right)^{i+1}\left(\overline{h}-\b\right)^{j}\right)=0,\\
\left(\overline{h}+\b\right)\cdot \eta_{\al_{k},\b^{s}}\left(\gamma\left(h,\overline{h}\right)\right)
       &=-\eta_{\al_{k},\b^{s}}\left(\left(h-\al_{k}\right)^{i}\left(\overline{h}-\b\right)^{j+1}\right)=-\delta_{i,0}\delta_{j,s-2}(s-1)!.
\end{align*}
Thus it follows that $h\cdot \eta_{\al_{k},\b^{s}}= -\al_{k}\eta_{\al_{k},\b^{s}}$ and $\overline{h}\cdot \eta_{\al_{k},\b^{s}}= -\b \eta_{\al_{k},\b^{s}}-(s-1)\eta_{\al_{k},\b^{s-1}}$. Due to the action of $\overline{f}$ on $\Gamma(\lambda,a,b)$, we get the following \begin{eqnarray*}\!\!\!\!\!\!\!\!\!\!\!\!&\!\!\!\!\!\!\!\!\!\!\!\!\!\!\!&
 \ \ \ \ \overline{f} \cdot\eta_{\al_{k},\b^{s}}\left(\gamma\left(h-2,\overline{h}\right)\right)
  =\frac{1}{4\lambda}\eta_{\al_{k},\b^{s}}\left( \left(\overline{h}^{2}+a\right)\left(h-\al_{k}\right)^{i}\left(\overline{h}-\b\right)^{j}\right)
\nonumber\\\!\!\!\!\!\!\!\!\!\!\!\!&\!\!\!\!\!\!\!\!\!\!\!\!\!\!\!&
=\frac{1}{4\lambda}\eta_{\al_{k},\b^{s}}\left(\left(h-\al_{k}\right)^{i}\left(\left(\overline{h}-\b\right)^{j+2}
      +2\b\left(\overline{h}-\b\right)^{j+1}+\left(\b^{2}+a\right)\left(\overline{h}-\b\right)^{j}\right)\right)
\nonumber\\\!\!\!\!\!\!\!\!\!\!\!\!&\!\!\!\!\!\!\!\!\!\!\!\!\!\!\!&
  =\frac{1}{4\lambda}\delta_{i,0}\left(\delta_{j,s-3}+2\b\delta_{j,s-2}+\left(\b^{2}+a\right)\delta_{j,s-1}\right)(s-1)!.
\end{eqnarray*}
Furthermore, according to Definition $\ref{defi01}$, it follows that
\begin{eqnarray*}\!\!\!\!\!\!\!\!\!\!\!\!&\!\!\!\!\!\!\!\!\!\!\!\!\!\!\!&
\ \ \ \ \ \ \ \ \ \ \ \ f \cdot\eta_{\al_{k},\b^{s}}\left(\gamma\left(h-2,\overline{h}\right)\right)
 =\frac{1}{2\lambda}\eta_{\al_{k},\b^{s}}\left(\left((h+2)\overline{h}+b\right)\gamma\left(h,\overline{h}\right)
  +\left(\overline{h}^{2}+a\right)\partial\left(\gamma\left(h,\overline{h}\right)\right)\right) 
\nonumber\\\!\!\!\!\!\!\!\!\!\!\!\!&\!\!\!\!\!\!\!\!\!\!\!\!\!\!\!&
 \ \ \ \ \ \ \ \ \ =\frac{1}{2\lambda}\eta_{\al_{k},\b^{s}}\left(\left(h-\al_{k}\right)^{i}\left((\al_{k+1}+j)\left(\overline{h}-\b\right)^{j+1}
+((\al_{k+1}+2j)\b+b)\left(\overline{h}-\b\right)^{j}+j\left(\b^{2}+a\right)\left(\overline{h}-\b\right)^{j-1}\right)\right)
\nonumber\\\!\!\!\!\!\!\!\!\!\!\!\!&\!\!\!\!\!\!\!\!\!\!\!\!\!\!\!&
 \ \ \ \ \ \ \ \ \ =\frac{1}{2\lambda}\left((\al_{k}+s)\delta_{j,s-2}+(\al_{k+s}\b+b)\delta_{j,s-1}\right)\delta_{i,0}(s-1)!
 + \frac{\b^{2}+a}{2\lambda}\delta_{i,0}\delta_{j,s}s!.\end{eqnarray*}
 Thus, we can conclude that equations (\ref{vz14.5}), (\ref{vz14.6}) hold, completing the proof.
\end{proof}
Based on this proposition, we can give the proof of Theorem \ref{vztheo4.1}, which gives the sufficient and
necessary conditions for the $\g$-module $M_{\al,\b}^{\lambda,a,b}$ to be simple, and also describes its submodule structure if it is not simple. Additionally, the isomorphism class of $M_{\al,\b}^{\lambda,a,b}$ is determined in the theorem.
\subsubsection{Proof of Theorem \ref{vztheo4.1}}
First, we give the proof of statement (i) in Theorem \ref{vztheo4.1}. 
Let $N$ be a nonzero submodule of $M^{\lambda,a,b}_{\al,\b}$ and $0\neq n\in N$. Then we can write
\begin{equation*}
 n=\sum_{k\in \Z,s\in \N}c_{k,s}\eta_{\al_{k},\b^{s}}\ \ \ \mbox{for\ some\ }c_{k,s}\in \C.
\end{equation*}
Applying $h$ to $n$, 
we get $n_{1}:=\sum_{s\in \N}d_{k,s}\eta_{\al_{k},\b^{s}}\in M_{\al,\b}^{k}\cap N$ for some $k\in \Z$. Then, according to (\ref{vz14.4}), it follows that $\eta_{\al_{k},\b}\in M_{\al,\b}^{k}\cap N$. Next, from Proposition \ref{defi3.1}, we obtain $N=M^{\lambda,a,b}_{\al,\b}$ if and only if $\b^{2}+a\neq0$ or $\b^{2}+a=0$ and $(\al_{k}+2s)\b+b\neq0$ for any $k\in \Z$ and $s\in \N$.

Next, we show that statement (ii) holds. Let $M_{\al,\b}^{\lambda,a,b}$ be a $\g$-module, which is not simple. Then we obtain that $\b^{2}+a=0$ and $(\al_{k}+2s)\b+b=0$ for some $k\in \Z$ and $s\in \N$. 
Now we consider the following two cases.

\noindent{\bf Case 1:} $\b=a=0$. This implies that $b=0$. Thus from (\ref{vz14.4})-(\ref{vz14.6}), we get  $\overline{f}\cdot \eta_{\al_{k},\b}=0$, $f\cdot \eta_{\al_{k},\b}=0$ and $h\cdot \eta_{\al_{k},\b}=-\al_{k}\eta_{\al_{k},\b}$. Thus $M_{k,s}=U(\g)\eta_{\al_{k},\b}$ is the (opposite) Verma module with the lowest weight $\eta_{\al_{k},\b}$.

\noindent{\bf Case 2:} $\b\neq0$ and $a\neq0$. If $s=1$, by (\ref{vz14.4})-(\ref{vz14.6}), we obtain $\overline{f}\cdot \eta_{\al_{k},\b}=0$, $f\cdot \eta_{\al_{k},\b}=0$ and $h\cdot \eta_{\al_{k},\b}=-\al_{k}\eta_{\al_{k},\b}$. Therefore, $M_{k,s}$ is the Verma module with
the lowest weight $\eta_{\al_{k},\b}$.

If $s>1$, then take $k_{0}=k+s-1$ and we get
\begin{equation*}
  (\al+2k_{0}+2)\b+b=(\al+2k+2s)\b+b=0.
\end{equation*}
Thus, according to (\ref{vz14.4})-(\ref{vz14.6}), we get  $\overline{f}\cdot \eta_{\al+2k_{0},\b}=0$, $f\cdot \eta_{\al+2k_{0},\b}=0$ and $h\cdot \eta_{\al+2k_{0},\b}=-(\al+2k_{0})\eta_{\al+2k_{0},\b}$. Thus $U(\g)\eta_{\al+2k_{0},\b}$ is the Verma module with the lowest weight $\eta_{\al+2k_{0},\b}$. Combining
Case 1 and Case 2 implies statement (ii).

Next we prove statement (iii). Let $\varphi:M_{\al,\b}^{\lambda,a,b}\longrightarrow M_{\al,\b}^{\lambda',a,b}$ be a linear map defined by
\begin{equation*}
 \varphi:\eta_{\al_{k},\b^{s}} \longmapsto \frac{\lambda^{k}}{(\lambda')^{k}}\eta_{\al_{k},\b^{s}}.
\end{equation*}
The map  $\varphi$ is well defined and is an isomorphism of vector spaces. Now we show that it is a homomorphism of $U(\g)$-modules. For any $k\in \Z$ and $s\in \N$, we have the following
\begin{align*}
  \overline{e}\cdot \varphi\left(\eta_{\al_{k},\b^{s}}\right) &= \frac{\lambda^{k}}{(\lambda')^{k}}\overline{e}\cdot \eta_{\al_{k},\b^{s}}=-\frac{\lambda^{k}}{(\lambda')^{k-1}}\eta_{\al_{k-1},\b^{s}}, \\
  \varphi\left(\overline{e}\cdot\eta_{\al_{k},\b^{s}}\right)&=-\lambda\varphi\left(\eta_{\al_{k-1},\b^{s}}\right)
                   =-\frac{\lambda^{k}}{(\lambda')^{k-1}}\eta_{\al_{k-1},\b^{s}}, \\
  e\cdot \varphi\left(\eta_{\al_{k},\b^{s}}\right) &= \frac{\lambda^{k}}{(\lambda')^{k}}e\cdot \eta_{\al_{k},\b^{s}}=\frac{2\lambda^{k}}{(\lambda')^{k-1}}\eta_{\al_{k-1},\b^{s+1}},  \\
  \varphi\left(e\cdot\eta_{\al_{k},\b^{s}}\right)&=2\lambda\varphi\left(\eta_{\al_{k-1},\b^{s+1}}\right)
                   =\frac{2\lambda^{k}}{(\lambda')^{k-1}}\eta_{\al_{k-1},\b^{s+1}},   \\
  h\cdot \varphi\left(\eta_{\al_{k},\b^{s}}\right) &= \frac{\lambda^{k}}{(\lambda')^{k}}h\cdot \eta_{\al_{k},\b^{s}}=-\frac{\al_{k}\lambda^{k}}{(\lambda')^{k}}\eta_{\al_{k},\b^{s}},  \\
   \varphi\left(h\cdot\eta_{\al_{k},\b^{s}}\right)&=-\al_{k}\varphi\left(\eta_{\al_{k},\b^{s}}\right)
                   =-\frac{\al_{k}\lambda^{k}}{(\lambda')^{k}}\eta_{\al_{k},\b^{s}},  \\
  \overline{h}\cdot \varphi\left(\eta_{\al_{k},\b^{s}}\right) &= \frac{\lambda^{k}}{(\lambda')^{k}}\overline{h}\cdot \eta_{\al_{k},\b^{s}}=-\frac{\lambda^{k}}{(\lambda')^{k}}\left(\b\eta_{\al_{k},\b^{s}}+(s-1)\eta_{\al_{k},\b^{s-1}}\right),   \\
  \varphi\left(\overline{h}\cdot\eta_{\al_{k},\b^{s}}\right)
  &=-\frac{\lambda^{k}}{(\lambda')^{k}}\left(\b\eta_{\al_{k},\b^{s}}+(s-1)\eta_{\al_{k},\b^{s-1}}\right),
\end{align*}
which shows that $\varphi\left(y\cdot\eta_{\al_{k},\b^{s}}\right)=y\cdot\varphi\left(\eta_{\al_{k},\b^{s}}\right)$ for $y\in \left\{\overline{e},e,\overline{h},h\right\}$. Moreover, applying $\overline{f}$ on $\varphi\left(\eta_{\al_{k},\b^{s}}\right)$, we get
\begin{align*}
 \overline{f}\cdot \varphi\left(\eta_{\al_{k},\b^{s}}\right)&=\frac{\lambda^{k}}{4(\lambda')^{k+1}}\left((s-1)(s-2)\eta_{\al_{k+1},\b^{s-2}}+2(s-1)\b\eta_{\al_{k+1},\b^{s-1}}
+\left(\b^{2}+a\right)\eta_{\al_{k+1},\b^{s}}\right),\\
\varphi\left(\overline{f}\cdot\eta_{\al_{k},\b^{s}}\right)&=\frac{(s-1)(s-2)}{4\lambda}\varphi\left(\eta_{\al_{k+1},\b^{s-2}}\right)
 +\frac{(s-1)\b}{2\lambda}\varphi\left(\eta_{\al_{k+1},\b^{s-1}}\right)
  +\frac{\b^{2}+a}{4\lambda}\varphi\left(\eta_{\al_{k+1},\b^{s}}\right)\\
 &=\frac{\lambda^{k}}{4(\lambda')^{k+1}}\left((s-1)(s-2)\eta_{\al_{k+1},\b^{s-2}}+2(s-1)\b\eta_{\al_{k+1},\b^{s-1}}
+\left(\b^{2}+a\right)\eta_{\al_{k+1},\b^{s}}\right),
\end{align*}
 which implies that $\varphi\left(\overline{f}\cdot\eta_{\al_{k},\b^{s}}\right)=\overline{f}\cdot\varphi\left(\eta_{\al_{k},\b^{s}}\right)$. Moreover, let $f$ act on $\varphi\left(\eta_{\al_{k},\b^{s}}\right)$, we obtain
\begin{align*}
  f\cdot \varphi\left(\eta_{\al_{k},\b^{s}}\right)
  &=\frac{\lambda^{k}}{2(\lambda')^{k+1}}\left((s-1)(\al_{k}+s)\eta_{\al_{k+1},\b^{s-1}}+\left(\b^{2}+a\right)\eta_{\al_{k+1},\b^{s+1}}
  +\left(\al_{k+s}\b+b\right)\eta_{\al_{k+1},\b^{s}}\right),\\
\varphi\left(f\cdot\eta_{\al_{k},\b^{s}}\right)&=\frac{(s-1)(\al_{k}+s)}{2\lambda}\varphi\left(\eta_{\al_{k+1},\b^{s-1}}\right)
+\frac{\left(\b^{2}+a\right)}{2\lambda}\varphi\left(\eta_{\al_{k+1},\b^{s+1}}\right)
+\frac{\al_{k+s}\b+b}{2\lambda}\varphi\left(\eta_{\al_{k+1},\b^{s}}\right)\\
  &=\frac{\lambda^{k}}{2(\lambda')^{k+1}}\left((s-1)(\al_{k}+s)\eta_{\al_{k+1},\b^{s-1}}+\left(\b^{2}+a\right)\eta_{\al_{k+1},\b^{s+1}}
  +\left(\al_{k+s}\b+b\right)\eta_{\al_{k+1},\b^{s}}\right).
\end{align*}
Thus it follows that $\varphi\left(f\cdot\eta_{\al_{k},\b^{s}}\right)=f\cdot\varphi\left(\eta_{\al_{k},\b^{s}}\right)$. Therefore, $\varphi$ is a homomorphism of $U(\g)$-modules and the proof of statement (iii) is finished.

Now we prove statement (iv). By statement (iii), it is sufficient to show that $M_{\al,\b}^{\lambda,a,b}\cong M_{\al',\b'}^{\lambda,a',b'}$ if and only if $\al-\al'\in 2\Z$, $\b=\b'$, $a=a'$ and $b=b'$. The sufficiency of the conditions is clear. Suppose $M_{\al,\b}^{\lambda,a,b}\cong M_{\al',\b'}^{\lambda,a',b'}$ as $U(\g)$-modules and let $\psi:M_{\al,\b}^{\lambda,a,b}\longrightarrow M_{\al',\b'}^{\lambda,a',b'}$ be the isomorphism. Comparing the set of weights, we have $\al+2\Z=\al'+2\Z$, that is
\begin{equation}\label{vz4.71}
  \al-\al'\in 2\Z.\end{equation}
Assume $\al=\al'+2n_{0}$ for some $n_{0}\in \Z$. For any $k\in \Z$, applying $h$ on $\psi\left(\eta_{\al_{k},\b} \right)$, we get
 \begin{equation*}
   h \cdot\psi\left(\eta_{\al_{k},\b}\right)=\psi\left(h \cdot \eta_{\al_{k},\b}\right)=-\al_{k}\psi\left(\eta_{\al_{k},\b}\right)=-\left(\al'+2n_{0}+2k\right)\psi\left(\eta_{\al_{k},\b}\right).
 \end{equation*}
 Then it follows that $\psi\left(\eta_{\al_{k},\b}\right)\in M_{\al',\b'}^{n_{0}+k}$. Writing $\psi\left(\eta_{\al_{k},\b}\right)=\sum_{s\in \N}d_{k,s}\eta_{\al_{k},(\b')^{s}}$ with $d_{k,s}\in \C$ for $s\in \N$ and applying $\overline{h}$ to it, we obtain
 \begin{align*}
    \psi\left(\overline{h}\cdot \eta_{\al_{k},\b} \right) = -\b\psi\left(\eta_{\al_{k},\b} \right), \ \ \ \ \
   \overline{h}\cdot\psi\left( \eta_{\al_{k},\b} \right) 
   = -\b'\psi\left(\eta_{\al_{k},\b} \right)-\SUM{s\in \N}{}(s-1)d_{k,s}\eta_{\al_{k},(\b')^{s-1}}.
 \end{align*}
Comparing the respective coefficients of $\psi\left(\overline{h}\cdot \eta_{\al,\b} \right)$ and $ \overline{h}\cdot\psi\left( \eta_{\al,\b} \right)$, we obtain $s=1$ and
\begin{equation}\label{vz4.81}
  \b=\b'.
\end{equation}
This implies that $\psi\left(\eta_{\al_{k},\b} \right)=d_{k,1}\eta_{\al_{k},\b}$ with $d_{k,1}\in \C^{\times}$ for any $k\in \Z$. Acting by $\overline{e}$ on $\psi\left(\eta_{\al_{k},\b}\right)$, we have
\begin{align*}
  \psi\left(\overline{e}\cdot \eta_{\al_{k},\b}\right) =-\lambda\psi\left(\eta_{\al_{k-1},\b} \right)=-\lambda d_{k-1,1}\eta_{\al_{k-1},\b},\ \ \ \
\overline{e}\cdot \psi\left(\eta_{\al_{k},\b}\right) = d_{k,1}\overline{e}\cdot\eta_{\al_{k},\b}=-\lambda d_{k,1}\eta_{\al_{k-1},\b},
\end{align*}
which implies that $d_{k-1,1}=d_{k,1}$, for any $k\in \Z$.  Without loss of generality, we may assume $d_{k,1}=1$, for all $k\in \Z$. 
Next, applying $\overline{f}$ to $\psi\left(\eta_{\al-2,\b}\right)$, we have
\begin{align*}
 \psi\left(\overline{f}\cdot \eta_{\al-2,\b}\right) = \frac{\b^{2}+a}{4\lambda}\psi\left(\eta_{\al,\b} \right)=\frac{\b^{2}+a}{4\lambda}\eta_{\al,\b},\ \ \ \ \
  \overline{f}\cdot\psi\left(\eta_{\al-2,\b}\right) =\overline{f}\cdot\eta_{\al-2,\b}=\frac{\b^{2}+a'}{4\lambda}\eta_{\al,\b},
\end{align*}
which shows that
\begin{equation}\label{vzeq4.9}
  a=a'.
\end{equation}
Furthermore, acting by $e$ on $\psi\left(\eta_{\al+2,\b}\right)$ and by $f$ on $\psi\left(\eta_{\al-2,\b}\right)$, respectively, we obtain
\begin{align*}
   \psi\left(e\cdot \eta_{\al+2,\b}\right) &=2\lambda\psi\left(\eta_{\al,\b^{2}}\right), \ \ \ \ \ \ \ \ \ \ \ \ \ \ \ \ \ \ \ \ \ \ \ \ \ \ \ \ \,
  e\cdot\psi\left(\eta_{\al+2,\b}\right)=e\cdot\eta_{\al+2,\b}=2\lambda\eta_{\al,\b^{2}},\\
  \psi\left(f\cdot \eta_{\al-2,\b}\right)  
   &= \frac{\al\b+b}{2\lambda}\eta_{\al,\b}+\frac{\b^{2}+a}{2\lambda}\psi\left(\eta_{\al,\b^{2}}\right), \ \ \ \
  f\cdot\psi\left(\eta_{\al-2,\b}\right)
  =\frac{\al\b+b'}{2\lambda}
 \eta_{\al,\b}+\frac{\b^{2}+a}{2\lambda}\eta_{\al,\b^{2}}.
\end{align*}
 This implies $\psi\left(\eta_{\al,\b^{2}}\right)=\eta_{\al,\b^{2}}$  and $b=b'$. Thus, with (\ref{vz4.71}), (\ref{vz4.81}) and (\ref{vzeq4.9}), we complete the proof of statement (iv) and then the proof of Theorem \ref{vztheo4.1}.
\subsubsection{Proof of Theorem \ref{th0}}
Before proving Theorem \ref{th0}, we further need the following preliminary results for
later use. First, we consider the associative algebra $U^{\left(\overline{e}\right)}$, defined as the quotient of the free associative algebra $R$ with generators $\overline{e}^{-1}$, $\overline{e}$, $\overline{f}$, $\overline{h}$, $e$, $f$ and $h$ modulo the ideal, generated by the relations
\begin{align}
ef-fe&=h,\ \ \ \ \ \ he-eh=2e,\ \ \ \ \ \ e\overline{f}-\overline{f}e=\overline{h}, \ \ \ \ \ \ \ \ \ \ \overline{h}e-e\overline{h}=2\overline{e}, \label{vz4.1}\\
\overline{e}f-f\overline{e}&=\overline{h},\ \ \ \ \ fh-hf=2f, \ \ \ \ \ \ f\overline{h}-\overline{h}f=2\overline{f},\ \ \ \ \ \ \ \, h\overline{e}-\overline{e}h=2\overline{e}, \label{vz4.2}\\
\overline{f}h-h\overline{f}&=2\overline{f},\ \ \ \,y_{1}\overline{y_{1}}-\overline{y_{1}}y_{1}=0,\ \ \ \
\overline{y_{1}}\,\overline{y_{2}}-\overline{y_{2}}\,\overline{y_{1}}=0,\ \ \ \ \,\overline{e}^{-1}\,\overline{e}=\overline{e}\,\overline{e}^{-1}=1.\label{vz4.3}
\end{align}
where $y_{1},y_{2}\in \{e,h,f\}$. The algebra $U^{\left(\overline{e}\right)}$ is called the {\it localization} of the algebra $U(\g)$ with respect to the multiplicative set $\{\overline{e}^{i}\,:\,i\in \N\}$. As usual, abusing notation we will use the same notation for the elements of the original algebra $R$ and the quotient $U^{\left(\overline{e}\right)}$.

For $k\in \Z$, denote by $\Theta_{k}$ the automorphism of $U^{\left(\overline{e}\right)}$ given by the assignment $\Theta_{k}(u)=\overline{e}^{-k}u\overline{e}^{k}$,  where $ u\in U^{\left(\overline{e}\right)}$. Then we have the following Lemma.

\begin{lemm}\label{vzlemma4.4}For every $k\in \Z$, we have
\begin{align}
\Theta_{k}(\overline{e})&= \overline{e},\ \ \ \ \,\Theta_{k}\left(\overline{e}^{-1}\right)=\overline{e}^{-1},\ \ \ \ \, \Theta_{k}\left(\overline{f}\right) = \overline{f},\ \ \ \ \ \Theta_{k}\left(\overline{h}\right) = \overline{h},\label{vz4.4}\\
\Theta_{k}(e)&= e,\ \ \ \ \,\Theta_{k}(h) = h+2k,\ \ \ \ \Theta_{k}(f)= f-k\overline{e}^{-1}\overline{h}.\label{vz4.5}
\end{align}\end{lemm}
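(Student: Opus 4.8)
The plan is to verify each identity first at $k=1$, directly from the defining relations (\ref{vz4.1})--(\ref{vz4.3}), and then to propagate it to all $k\in\Z$ using the evident composition law $\Theta_{k+1}=\Theta_1\circ\Theta_k$, which holds because $\Theta_{k+1}(u)=\overline{e}^{-1}\Theta_k(u)\overline{e}$, together with $\Theta_0=\mathrm{id}$ and $\Theta_{-1}=\Theta_1^{-1}$.

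For the first line (\ref{vz4.4}), the relations in (\ref{vz4.3}) say precisely that $\overline{e},\overline{f},\overline{h}$ commute pairwise and that $\overline{e}^{-1}\overline{e}=\overline{e}\,\overline{e}^{-1}=1$. Hence $\overline{e}$ commutes with each of $\overline{e}$, $\overline{e}^{-1}$, $\overline{f}$, $\overline{h}$, so conjugation by $\overline{e}^{k}$ fixes all of them; this yields (\ref{vz4.4}) at once for every $k$.

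For the second line (\ref{vz4.5}), the relation $e\overline{e}=\overline{e}e$ (the case $y_1=e$ of $y_1\overline{y_1}=\overline{y_1}y_1$) gives $\Theta_k(e)=e$ immediately. For $h$, I rewrite $h\overline{e}-\overline{e}h=2\overline{e}$ as $h\overline{e}=\overline{e}(h+2)$, so that $\Theta_1(h)=\overline{e}^{-1}h\overline{e}=h+2$; since scalars are central, $\Theta_1(h+2k)=h+2(k+1)$, and the composition law gives $\Theta_k(h)=h+2k$ for $k\ge 0$, with the downward direction handled by $\Theta_{-1}=\Theta_1^{-1}$. For $f$, I rewrite $\overline{e}f-f\overline{e}=\overline{h}$ as $f\overline{e}=\overline{e}f-\overline{h}$, whence $\Theta_1(f)=\overline{e}^{-1}f\overline{e}=f-\overline{e}^{-1}\overline{h}$.

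The only step that needs a little care is the induction for $f$. Here I invoke the already-established invariance of $\overline{e}^{-1}$ and $\overline{h}$ under $\Theta_1$ from (\ref{vz4.4}), so that $\Theta_{k+1}(f)=\Theta_1\bigl(f-k\overline{e}^{-1}\overline{h}\bigr)=\bigl(f-\overline{e}^{-1}\overline{h}\bigr)-k\overline{e}^{-1}\overline{h}=f-(k+1)\overline{e}^{-1}\overline{h}$, i.e.\ the correction term accumulates linearly in $k$; the extension to negative $k$ is identical. No step is a genuine obstacle, since every computation is a single rewrite using one defining relation; the only thing to be careful about is recording the base cases correctly and using the invariance of $\overline{e}^{-1}$ and $\overline{h}$ when iterating the action on $f$.
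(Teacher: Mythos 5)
Your proof is correct and takes essentially the same route as the paper's: the identities in (\ref{vz4.4}) and $\Theta_{k}(e)=e$ are read off directly from the relations (\ref{vz4.3}), while $\Theta_{k}(h)=h+2k$ and $\Theta_{k}(f)=f-k\overline{e}^{-1}\overline{h}$ are proved by induction on $k$ (with the negative case handled symmetrically), your composition law $\Theta_{k+1}=\Theta_{1}\circ\Theta_{k}$ being exactly the paper's inductive step $\overline{e}^{-k-1}u\,\overline{e}^{\,k+1}=\overline{e}^{-1}\left(\overline{e}^{-k}u\,\overline{e}^{\,k}\right)\overline{e}$ made explicit. The only cosmetic difference is that you spell out the invariance of $\overline{e}^{-1}\overline{h}$ under $\Theta_{1}$, which the paper uses implicitly in the same computation.
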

\begin{proof}According to the definition of $\Theta_{k}$ and formula (\ref{vz4.3}), we see that the four equalities in (\ref{vz4.4}) and the first formula in (\ref{vz4.5}) are true. So we only need to prove the last two equalities in (\ref{vz4.5}). We prove them for $k\in \Z_{+}$ by induction on $k$. For $-k\in \Z_{+}$, the arguments are similar. One has $\Theta_{k}(h) = h$ and $\Theta_{k}(f)= f$ when $k=0$. Suppose that the claim holds for $k>0$, then
\begin{align*}
   \overline{e}^{-k-1}f\overline{e}^{k+1}&=\overline{e}^{-1}\left(f-k\overline{e}^{-1}\overline{h}\right)\overline{e}= \overline{e}^{-1}f\overline{e}-k\overline{e}^{-2}\overline{h}\overline{e}
=f-(k+1)\overline{e}^{-1}\overline{h},  \\
   \overline{e}^{-k-1}h\overline{e}^{k+1}&=\overline{e}^{-1}\left(h+2k\right)\overline{e}=\overline{e}^{-1}h\overline{e}+2k=h+2(k+1).
\end{align*}
Hence, the claim follows and the proof is completed.\end{proof}
Motivated by Lemma \ref{vzlemma4.4}, we get the following proposition.
\begin{prop}For every $z\in \C$, there is a unique automorphism $\Theta_{z}$ of $U^{\left(\overline{e}\right)}$ such that
\begin{align*}
  \Theta_{z}(f) = f-z\overline{e}^{-1}\overline{h}, \ \ \ \
  \Theta_{z}(h) = h+2z,\ \ \ \
  \Theta_{z}(u)= u,
\end{align*}
where $u=\overline{f},\overline{h},\overline{e},e,\overline{e}^{-1}$, respectively. Moreover, we have $\Theta_{z}^{-1}=\Theta_{-z}$.
\end{prop}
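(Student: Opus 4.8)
The plan is to exploit the presentation of $U^{(\overline{e})}$ as the quotient $R/I$, where $I$ is the two-sided ideal of the free algebra $R$ generated by the relations (\ref{vz4.1})--(\ref{vz4.3}). The prescribed formulas for $\Theta_{z}$ are exactly those of Lemma \ref{vzlemma4.4} with the integer $k$ replaced by the scalar $z$, so the content of the proposition is that these formulas continue to define an automorphism for arbitrary $z\in\C$. By the universal property of the quotient, to obtain an algebra endomorphism $\Theta_{z}$ with the stated values on the generators $e,f,h,\overline{e},\overline{f},\overline{h},\overline{e}^{-1}$, it suffices to verify that the proposed images satisfy every defining relation. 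Writing $F:=f-z\overline{e}^{-1}\overline{h}$ and $H:=h+2z$ for the images of $f$ and $h$ (all other generators being fixed), I would run through the relations one at a time.

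Most of these are immediate. Relations not involving $f$ or $h$ are preserved because their generators are fixed, while relations such as $he-eh=2e$, $h\overline{e}-\overline{e}h=2\overline{e}$ and $\overline{f}h-h\overline{f}=2\overline{f}$ see $h$ only through the central scalar shift $H=h+2z$, which cancels in every commutator; and $\overline{e}f-f\overline{e}=\overline{h}$ reduces to the fact that $\overline{e}$ commutes with $\overline{e}^{-1}\overline{h}$, which holds since all barred generators commute. The only substantive checks are the two relations $ef-fe=h$ and $fh-hf=2f$, where the correction term $-z\overline{e}^{-1}\overline{h}$ genuinely interacts with $e$ and $h$; I expect the bookkeeping here, namely correctly commuting $\overline{e}^{-1}$ past $e$ and $h$, to be the main obstacle.

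For those two I would first record the straightening rules forced by (\ref{vz4.1})--(\ref{vz4.3}): from $h\overline{e}-\overline{e}h=2\overline{e}$ one gets $\overline{e}^{-1}h=(h+2)\overline{e}^{-1}$, equivalently $h\overline{e}^{-1}=\overline{e}^{-1}(h-2)$; from $\overline{h}e-e\overline{h}=2\overline{e}$ together with $e\overline{e}=\overline{e}e$ one gets $e\,\overline{e}^{-1}\overline{h}-\overline{e}^{-1}\overline{h}\,e=-2$; and $\overline{h}$ commutes with both $h$ and $\overline{e}^{-1}$. Using these, $[e,F]=[e,f]-z\,(e\,\overline{e}^{-1}\overline{h}-\overline{e}^{-1}\overline{h}\,e)=h+2z=H$, and $[F,H]=[f,h]-z\,[\overline{e}^{-1}\overline{h},h]=2f-2z\,\overline{e}^{-1}\overline{h}=2F$, as required; the remaining relation $f\overline{h}-\overline{h}f=2\overline{f}$ follows at once since $\overline{e}^{-1}\overline{h}$ commutes with $\overline{h}$. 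This establishes that $\Theta_{z}$ is a well-defined algebra endomorphism of $U^{(\overline{e})}$.

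Finally, uniqueness is automatic, since an algebra homomorphism is determined by its values on the generating set $\{e,f,h,\overline{e},\overline{f},\overline{h},\overline{e}^{-1}\}$ of $U^{(\overline{e})}$. To see that $\Theta_{z}$ is invertible with $\Theta_{z}^{-1}=\Theta_{-z}$, I would apply the same construction to produce $\Theta_{-z}$ and verify on generators that $\Theta_{z}\Theta_{-z}$ and $\Theta_{-z}\Theta_{z}$ both fix $e,\overline{e},\overline{f},\overline{h},\overline{e}^{-1}$, send $h\mapsto h$ (the shifts $\pm 2z$ cancel), and send $f\mapsto f$ (the terms $\mp z\overline{e}^{-1}\overline{h}$ cancel, using $\Theta_{\pm z}(\overline{e}^{-1}\overline{h})=\overline{e}^{-1}\overline{h}$). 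Since both composites agree with the identity on a generating set, they equal the identity, so $\Theta_{z}$ is the asserted automorphism and $\Theta_{z}^{-1}=\Theta_{-z}$.
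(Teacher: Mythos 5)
Your proof is correct, and it is worth noting that it supplies an argument the paper itself omits: the paper states this proposition with no proof at all, presenting it as ``motivated by'' Lemma \ref{vzlemma4.4}, whose content is that for \emph{integer} $k$ the map $\Theta_{k}(u)=\overline{e}^{-k}u\overline{e}^{k}$ is conjugation by $\overline{e}^{k}$ and acts on generators by the stated formulas; the implicit justification for arbitrary $z\in\C$ is the standard interpolation argument (the formulas in Lemma \ref{vzlemma4.4} are polynomial in $k$, so one substitutes $z$ for $k$), as is usual in the twisting-functor literature. You instead work directly from the presentation of $U^{\left(\overline{e}\right)}$ as the quotient $R/I$ given in (\ref{vz4.1})--(\ref{vz4.3}) and check that the assignment $f\mapsto f-z\overline{e}^{-1}\overline{h}$, $h\mapsto h+2z$, fixing the remaining generators, preserves every defining relation, with uniqueness on generators and invertibility via $\Theta_{z}\Theta_{-z}=\Theta_{-z}\Theta_{z}=\mathrm{id}$. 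This is a genuinely different route, and arguably the more honest one: for $z\notin\Z$ no element of $U^{\left(\overline{e}\right)}$ realizes $\Theta_{z}$ as a conjugation, so the interpolation heuristic really does require exactly the relation-by-relation verification you carry out, whereas the paper's route buys conceptual transparency ($\Theta_{z}$ as an interpolation of inner-type automorphisms, consistent with how $B_{z}$ is used later). Your key computations are right: $[e,\overline{e}^{-1}\overline{h}]=\overline{e}^{-1}(e\overline{h}-\overline{h}e)=-2$ gives $[e,F]=h+2z=H$, and $[\overline{e}^{-1}\overline{h},h]=2\overline{e}^{-1}\overline{h}$ (via $h\overline{e}^{-1}=\overline{e}^{-1}(h-2)$) gives $[F,H]=2F$. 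One pedantic remark: your blanket clause ``relations not involving $f$ or $h$'' does not literally cover the relation $f\overline{f}-\overline{f}f=0$, which does involve the substituted generator $f$; it is preserved for the same trivial reason as $f\overline{h}-\overline{h}f=2\overline{f}$, namely that $\overline{e}^{-1}\overline{h}$ commutes with all barred generators, so this is a one-line omission rather than a gap.
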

Fix $z\in \C$, set $y\cdot u\cdot y'=\Theta_{z}(y)uy'$ for any $y,y'\in U(\g)$ and $u\in U^{\left(\overline{e}\right)}$.  This defines on $U^{\left(\overline{e}\right)}$ the structure  of a $U(\g)$-$U(\g)$-bimodule. Now we consider the functor $B_{z}:U(\g)$-mod $\longrightarrow$ $U(\g)$-mod of tensoring with the bimodule $U^{\left(\overline{e}\right)}$, that is
\begin{equation*}
B_{z}M=U^{\left(\overline{e}\right)}\bigotimes_{U(\g)}M,\ \ \ M\in U(\g)-\mbox{mod}.
\end{equation*}
The functors $B_{z}$ are called {\it Mathieu's twisting functors}. Now we are ready to prove Theorem \ref{th0}.

\noindent{\it Proof of Theorem \ref{th0}.} The action of $\overline{e}$ on $M_{\al,\b}^{\lambda,a,b}$ is,  clearly, bijective. Thus $M_{\al,\b}^{\lambda,a,b}$ carries the natural structure of a $U^{\left(\overline{e}\right)}$-module. Let $\phi:U^{\left(\overline{e}\right)}\bigotimes_{U(\g)}M_{\al,\b}^{\lambda,a,b}\longrightarrow M_{\al-2z,\b}^{\lambda,a,b}$ be the  linear map given by
\begin{equation*}
  \phi:1\otimes \eta_{\al_{k},\b^{s}}\longmapsto \eta_{\al_{k-z},\b^{s}}.
\end{equation*}
This is well defined and $\phi$ is an isomorphism of vector spaces. Next we show that $\phi$ is a homomorphism of $U(\g)$-modules. For any $\eta_{\al_{k},\b^{s}}\in M_{\al,\b}^{\lambda,a,b}$ with $k\in \Z$ and $s\in \N$, applying $\overline{e},e$ on $\phi\left(1\otimes \eta_{\al_{k},\b^{s}}\right)$, we obtain
\begin{eqnarray*}\!\!\!\!\!\!\!\!\!\!\!\!&\!\!\!\!\!\!\!\!\!\!\!\!\!\!\!&
\ \ \ \ \overline{e}\cdot \phi\left(1\otimes \eta_{\al_{k},\b^{s}}\right) =\overline{e} \cdot \eta_{\al_{k-z},\b^{s}}=-\lambda\eta_{\al_{k-z-1},\b^{s}},
\nonumber\\\!\!\!\!\!\!\!\!\!\!\!\!&\!\!\!\!\!\!\!\!\!\!\!\!\!\!\!&
\ \ \ \ \phi\left(\overline{e}\cdot 1\otimes \eta_{\al_{k},\b^{s}}\right)=\phi\left(1\otimes \overline{e}\cdot\eta_{\al_{k},\b^{s}}\right)
 =-\lambda\phi\left(1\otimes \eta_{\al_{k-1},\b^{s}}\right)=-\lambda\eta_{\al_{k-z-1},\b^{s}},
\nonumber\\\!\!\!\!\!\!\!\!\!\!\!\!&\!\!\!\!\!\!\!\!\!\!\!\!\!\!\!&
\ \ \ \ e\cdot \phi\left(1\otimes \eta_{\al_{k},\b^{s}}\right) =e \cdot \eta_{\al_{k-z},\b^{s}}=2\lambda\eta_{\al_{k-z-1},\b^{s+1}},
\nonumber\\\!\!\!\!\!\!\!\!\!\!\!\!&\!\!\!\!\!\!\!\!\!\!\!\!\!\!\!&
\ \ \ \ \phi\left(e\cdot 1\otimes \eta_{\al_{k},\b^{s}}\right)=\phi\left(1\otimes e\cdot\eta_{\al_{k},\b^{s}}\right)=2\lambda\phi\left(1\otimes \eta_{\al_{k-1},\b^{s+1}}\right)= 2\lambda\eta_{\al_{k-z-1},\b^{s+1}}.
\end{eqnarray*}
 Hence, we can conclude that $\phi\left(y\cdot 1\otimes \eta_{\al_{k},\b^{s}}\right)=y\cdot \phi\left(1\otimes \eta_{\al_{k},\b^{s}}\right)$ for $y=\overline{e},e$, respectively.

 Furthermore, letting $\overline{h},\overline{f}$ act on $\phi\left(1\otimes \eta_{\al_{k},\b^{s}}\right)$, respectively, we get
\begin{align*}
\overline{h}\cdot \phi\left(1\otimes \eta_{\al_{k},\b^{s}}\right)&=\overline{h} \cdot \eta_{\al_{k-z},\b^{s}}
      =-\b\eta_{\al_{k-z},\b^{s}}-(s-1)\eta_{\al_{k-z},\b^{s-1}},\\
\phi\left(\overline{h}\cdot 1\otimes \eta_{\al_{k},\b^{s}}\right)&=\phi\left(1\otimes \overline{h}\cdot\eta_{\al_{k},\b^{s}}\right)
 =-\b\eta_{\al_{k-z},\b^{s}}-(s-1)\eta_{\al_{k-z},\b^{s-1}},\\
 \overline{f}\cdot \phi\left(1\otimes \eta_{\al_{k},\b^{s}}\right)&=\overline{f} \cdot \eta_{\al_{k-z},\b^{s}}
     =\frac{1}{4\lambda}\left(K_{s}\eta_{\al_{k-z+1},\b^{s-2}}
+ 2K_{s}^{\b}\eta_{\al_{k-z+1},\b^{s-1}}+K_{a}^{\b}\eta_{\al_{k-z+1},\b^{s}}\right),\\
 \phi\left(\overline{f}\cdot 1\otimes \eta_{\al_{k},\b^{s}}\right)&=\phi\left(1\otimes \overline{f}\cdot\eta_{\al_{k},\b^{s}}\right)
 =\frac{1}{4\lambda}\left(K_{s}\eta_{\al_{k-z+1},\b^{s-2}}
+ 2K_{s}^{\b}\eta_{\al_{k-z+1},\b^{s-1}}+K_{a}^{\b}\eta_{\al_{k-z+1},\b^{s}}\right),
\end{align*}
where $K_{s}=(s-2)(s-1)$, $K_{s}^{\b}=(s-1)\b$ and $K_{a}^{\b}=\b^{2}+a$.
 Then it follows that $\phi\left(y\cdot 1\otimes \eta_{\al_{k},\b^{s}}\right)=y\cdot \phi\left(1\otimes \eta_{\al_{k},\b^{s}}\right)$, for $y=\overline{h}, \overline{f}$, respectively. Since $\Theta_{z}$ changes the generators $f$ and $h$ of $U(\g)$, applying $f$ and $h$ on $\phi\left(1\otimes \eta_{\al_{k},\b^{s}}\right)$, we have
\begin{eqnarray*}\!\!\!\!\!\!\!\!\!\!\!\!&\!\!\!\!\!\!\!\!\!\!\!\!\!\!\!&
\ \ \ \ \ \ \ \ f\cdot \phi\left(1\otimes \eta_{\al_{k},\b^{s}}\right) 
 =\frac{(s-1)(\al_{k-z}+s)}{2\lambda}\eta_{\al_{k-z+1},\b^{s-1}}
+ \frac{K_{a}^{\b}}{2\lambda}\eta_{\al_{k-z+1},\b^{s+1}}+\frac{\al_{k-z+s}\b+b}{2\lambda}\eta_{\al_{k-z+1},\b^{s}},
\nonumber\\\!\!\!\!\!\!\!\!\!\!\!\!&\!\!\!\!\!\!\!\!\!\!\!\!\!\!\!&
\ \ \ \ \ \ \ \ \phi\left(f\cdot 1\otimes \eta_{\al_{k},\b^{s}}\right)
   =\phi\left( 1\otimes f\cdot\eta_{\al_{k},\b^{s}}\right)-z\phi\left( 1\otimes \overline{e}^{-1}\overline{h}\cdot\eta_{\al_{k},\b^{s}}\right)
  \nonumber\\\!\!\!\!\!\!\!\!\!\!\!\!&\!\!\!\!\!\!\!\!\!\!\!\!\!\!\!&
\ \ \ \ \ \ \ \
=\frac{(s-1)(\al_{k}+s-2z)}{2\lambda} \eta_{\al_{k-z+1},\b^{s-1}} +\frac{K_{a}^{\b}}{2\lambda} \eta_{\al_{k-z+1},\b^{s+1}}
   +\frac{\al_{k+s}\b+b}{2\lambda} \eta_{\al_{k-z+1},\b^{s}}
   -\frac{z\b}{\lambda}\eta_{\al_{k-z+1},\b^{s}} 
\nonumber\\\!\!\!\!\!\!\!\!\!\!\!\!&\!\!\!\!\!\!\!\!\!\!\!\!\!\!\!&
\ \ \ \ \ \ \ \ =\frac{(s-1)(\al_{k-z}+s)}{2\lambda}\eta_{\al_{k-z+1},\b^{s-1}}+ \frac{K_{a}^{\b}}{2\lambda}\eta_{\al_{k-z+1},\b^{s+1}}
   +\frac{\al_{k-z+s}\b+b}{2\lambda}\eta_{\al_{k-z+1},\b^{s}},
\nonumber\\\!\!\!\!\!\!\!\!\!\!\!\!&\!\!\!\!\!\!\!\!\!\!\!\!\!\!\!&
\ \ \ \ \ \ \ \ h\cdot \phi\left(1\otimes \eta_{\al_{k},\b^{s}}\right)=h\cdot\eta_{\al_{k-z},\b^{s}}=-\al_{k-z}\eta_{\al_{k-z},\b^{s}},
\nonumber\\\!\!\!\!\!\!\!\!\!\!\!\!&\!\!\!\!\!\!\!\!\!\!\!\!\!\!\!&
\ \ \ \ \ \ \ \ \phi\left(h\cdot 1\otimes \eta_{\al_{k},\b^{s}}\right)= \phi\left(1\otimes (h+2z)\cdot \eta_{\al_{k},\b^{s}}\right)
=-\al_{k-z}\eta_{\al_{k-z},\b^{s}},
\end{eqnarray*}
where $K_{a}^{\b}=\b^{2}+a$. This shows that $\phi\left(y\cdot 1\otimes \eta_{\al_{k},\b^{s}}\right)=y\cdot \phi\left(1\otimes \eta_{\al_{k},\b^{s}}\right)$, for $y=f,h$, respectively. Thus, $\phi$ is a homomorphism of $U(\g)$-modules and the proof is completed.
\subsection{Weight modules associated to $\Theta(\lambda,a,b)$}
In this section, we will construct and classify a class of weight modules related to the non-weight module $\Theta(\lambda,a,b)$, which is defined in Definition $\ref{defi01}$. For $\al,\b, a, b \in \C$ and $\lambda\in \C^{\times}$, let $N_{\al,\b}^{\lambda,a,b}$ be a submodule of $\Theta(\lambda,a,b)^{*}$ with basis $\{\eta_{\al_{k},\b^{s}}\,|\,k\in \Z, s\in \N\}$.
The following proposition gives a precise action of $\g$ on $N_{\al,\b}^{\lambda,a,b}$.
\begin{prop}\label{defi3.21}For any $\al,\b,a, b\in \C$, $\lambda\in \C^{\times}$, $k\in \Z$ and $s\in \N$, $N_{\al,\b}^{\lambda,a,b}$ is a $\g$-module with the action defined as follows:
\begin{eqnarray}&\!\!\!\!\!\!\!\!\!\!\!\!\!\!\!\!\!\!\!\!\!\!\!\!&
  \overline{f}\cdot \eta_{\al_{k},\b^{s}}= -\lambda\eta_{\al_{k+1},\b^{s}}, \ \ \ \ \ \ \ \ \ \ \ \ \ \ \ \,
 f\cdot \eta_{\al_{k},\b^{s}}= -2\lambda\eta_{\al_{k+1},\b^{s+1}},\label{z4.22}\\
 &\!\!\!\!\!\!\!\!\!\!\!\!\!\!\!\!\!\!\!\!\!\!\!\!&
 h\cdot \eta_{\al_{k},\b^{s}} =-\al_{k}\eta_{\al_{k},\b^{s}},\ \ \ \ \ \ \ \ \ \ \ \ \ \ \ \ \,\overline{h}\cdot \eta_{\al_{k},\b^{s}} =-\b\eta_{\al_{k},\b^{s}}-(s-1)\eta_{\al_{k},\b^{s-1}},\label{vz4.23}\\
 &\!\!\!\!\!\!\!\!\!\!\!\!\!\!\!\!\!\!\!\!\!\!\!\!&
 \overline{e}\cdot \eta_{\al_{k},\b^{s}} = \frac{(s-2)(s-1)}{4\lambda}\eta_{\al_{k-1},\b^{s-2}}+\frac{(s-1)\b}{2\lambda}\eta_{\al_{k-1},\b^{s-1}}
 +\frac{\b^{2}+a}{4\lambda}\eta_{\al_{k-1},\b^{s}},\label{vz4.25}\\
 &\!\!\!\!\!\!\!\!\!\!\!\!\!\!\!\!\!\!\!\!\!\!\!\!&
 e\cdot\eta_{\al_{k},\b^{s}}= \frac{(s-1)(\al_{k}-s)}{2\lambda}\eta_{\al_{k-1},\b^{s-1}}+\frac{(\al_{k}-2s)\b+b}{2\lambda}\eta_{\al_{k-1},\b^{s}}
 -\frac{\b^{2}+a}{2\lambda}\eta_{\al_{k-1},\b^{s+1}}.\label{vz4.26}
\end{eqnarray}
\end{prop}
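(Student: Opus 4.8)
The plan is to prove Proposition~\ref{defi3.21} by directly computing the dual action on the basis $\{\eta_{\al_{k},\b^{s}}\}$, in complete analogy with the proof of Proposition~\ref{defi3.1}. Since $N_{\al,\b}^{\lambda,a,b}\subset\Theta(\lambda,a,b)^{*}$, for each generator $y\in\g$ the functional $y\cdot\eta_{\al_{k},\b^{s}}$ is determined by $\left(y\cdot\eta_{\al_{k},\b^{s}}\right)\!\left(\gamma\right)=-\eta_{\al_{k},\b^{s}}\!\left(y\cdot\gamma\right)$, where $y\cdot\gamma$ is taken from the $\Theta(\lambda,a,b)$-formulas of Definition~\ref{defi01}. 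As $\{\eta_{\al_{k},\b^{s}}\}$ is a basis, it suffices to evaluate each $y\cdot\eta_{\al_{k},\b^{s}}$ on the monomials $\gamma=\left(h-\al_{k'}\right)^{i}\!\left(\overline{h}-\b\right)^{j}$, where $k'$ is chosen so that the shift $h\mapsto h\pm2$ produced by $y$ returns the argument $\left(h-\al_{k}\right)^{i}\!\left(\overline{h}-\b\right)^{j}$, and then to read off coefficients from $\eta_{\al_{k},\b^{s}}\!\left(\left(h-\al_{k}\right)^{i}\!\left(\overline{h}-\b\right)^{j}\right)=\delta_{i,0}\delta_{j,s-1}(s-1)!$. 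One could instead transport Proposition~\ref{defi3.1} through the involution $\tau$, under which $\Theta(\lambda,a,b)$ is built from $\Gamma(\lambda,a,b)$; but the accompanying sign and weight relabelling makes the direct computation cleaner to present.

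First I would treat the four generators $\overline{f},f,h,\overline{h}$, for which the verification is immediate. The $\Theta$-actions of $\overline{f}$ and $f$ involve only the shift $h\mapsto h+2$ and at most one $\overline{\partial}$; substituting $\gamma=\left(h-\al_{k+1}\right)^{i}\!\left(\overline{h}-\b\right)^{j}$ and using $\gamma(h+2,\overline{h})=\left(h-\al_{k}\right)^{i}\!\left(\overline{h}-\b\right)^{j}$ yields $\overline{f}\cdot\eta_{\al_{k},\b^{s}}=-\lambda\eta_{\al_{k+1},\b^{s}}$ and $f\cdot\eta_{\al_{k},\b^{s}}=-2\lambda\eta_{\al_{k+1},\b^{s+1}}$, which is (\ref{z4.22}). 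Since $h$ and $\overline{h}$ act on $\Theta(\lambda,a,b)$ by multiplication, expanding $h=\left(h-\al_{k}\right)+\al_{k}$ and $\overline{h}=\left(\overline{h}-\b\right)+\b$ and applying $\eta_{\al_{k},\b^{s}}$ gives (\ref{vz4.23}) at once.

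For $\overline{e}$ I would use $\overline{e}\cdot\gamma=-\tfrac{1}{4\lambda}\left(\overline{h}^{2}+a\right)\gamma(h-2,\overline{h})$; evaluating $\overline{e}\cdot\eta_{\al_{k},\b^{s}}$ on $\gamma=\left(h-\al_{k-1}\right)^{i}\!\left(\overline{h}-\b\right)^{j}$ and writing $\overline{h}^{2}+a=\left(\overline{h}-\b\right)^{2}+2\b\left(\overline{h}-\b\right)+\left(\b^{2}+a\right)$ produces exactly the three contributions at $\delta_{j,s-3}$, $\delta_{j,s-2}$ and $\delta_{j,s-1}$, which after the factorial rescaling give (\ref{vz4.25}). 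The relation (\ref{vz4.26}) for $e$ is the genuine obstacle, since its $\Theta$-formula combines a multiplication term $-\tfrac{1}{2\lambda}\left((h-2)\overline{h}+b\right)$ with a derivative term $+\tfrac{1}{2\lambda}\left(\overline{h}^{2}+a\right)\overline{\partial}$, both preceded by $h\mapsto h-2$. The main work is to substitute $\gamma=\left(h-\al_{k-1}\right)^{i}\!\left(\overline{h}-\b\right)^{j}$, expand $(h-2)\overline{h}+b$ in the shifted coordinates $h-\al_{k}$ and $\overline{h}-\b$, differentiate the second term in $\overline{h}$, and collect all surviving $\delta_{i,0}\delta_{j,\bullet}$ terms; the coefficients then reorganize, via $\al_{k-1}+2=\al_{k}$ and the factorial identities, into $\tfrac{(s-1)(\al_{k}-s)}{2\lambda}$, $\tfrac{(\al_{k}-2s)\b+b}{2\lambda}$ and $-\tfrac{\b^{2}+a}{2\lambda}$, matching (\ref{vz4.26}). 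This index bookkeeping — not any conceptual difficulty — is where care is required, and completing it finishes the proof.
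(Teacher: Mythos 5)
Your proposal is correct and follows exactly the route the paper intends: the paper's own ``proof'' of Proposition \ref{defi3.21} is a one-line remark that the argument is the same as for Proposition \ref{defi3.1}, namely evaluating each dual action $\left(y\cdot\eta_{\al_{k},\b^{s}}\right)(\gamma)=-\eta_{\al_{k},\b^{s}}\left(y\cdot\gamma\right)$ on shifted monomials $\left(h-\al_{k'}\right)^{i}\left(\overline{h}-\b\right)^{j}$ and expanding $\overline{h}^{2}+a$ and $(h-2)\overline{h}+b$ in the coordinates $h-\al_{k}$, $\overline{h}-\b$, which is precisely your plan. Your coefficient bookkeeping for $e$ (the terms $\frac{(s-1)(\al_{k}-s)}{2\lambda}$, $\frac{(\al_{k}-2s)\b+b}{2\lambda}$, $-\frac{\b^{2}+a}{2\lambda}$) checks out against a direct computation, so the proposal is a faithful reconstruction of the omitted proof.
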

\begin{proof}The proof of this proposition is similar to Proposition \ref{defi3.1}, we omit it.\end{proof}
\begin{theo}\label{vztheo4.2}Let $N_{\al,\b}^{\lambda,a,b}$ be a $\g$-module defined in Proposition $\ref{defi3.21}$. Then the following holds:
\begin{itemize}
  \item[\rm(i)]   $N_{\al,\b}^{\lambda,a,b}$ is a simple $\g$-module if and only if $\b^{2}+a\neq0$ or $\b^{2}+a=0$ and $(\al_{k}-2s)\b+b\neq0$ for any $k\in \Z$ and $s\in \N$.
  \item[\rm(ii)] If $N_{\al,\b}^{\lambda,a,b}$ is not simple, then $N'_{k,s}:=U(\g)\eta_{\al_{k},\b}$ is the Verma module with the highest weight $\eta_{\al_{k},\b}$, where $k\in \Z$ such that $(\al_{k}-2s)\b+b=0$ with $s\in \N$. Moreover, $N_{\al,\b}^{\lambda,a,b}/N'_{k,s}\cong V'_{k,s}$, where $V'_{k,s}$ is a generalized weight $U(\g)$-module with infinite dimensional weight spaces.
  \item[\rm(iii)] For any $\al,\b,a,b\in \C$ and $\lambda,\lambda'\in \C^{\times}$, we have $N_{\al,\b}^{\lambda,a,b}\cong N_{\al,\b}^{\lambda',a,b}$.
  \item[\rm(iv)] For any $\al,\b,\al',\b',a,b,a',b'\in \C$ and $\lambda,\lambda'\in \C^{\times}$, we have $N_{\al,\b}^{\lambda,a,b}\cong N_{\al',\b'}^{\lambda',a',b'}$ if and only if $\al-\al'\in 2\Z$, $\b=\b'$, $a=a'$ and $b=b'$.
\end{itemize}\end{theo}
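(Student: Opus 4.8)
The plan is to deduce Theorem~\ref{vztheo4.2} from Theorem~\ref{vztheo4.1} by exploiting the involution $\tau$, rather than repeating the four arguments from scratch. The key structural observation is that $\Theta(\lambda,a,b)$ is a $\tau$-twist of $\Gamma$: writing $V^{\tau}$ for the module with the twisted action $x\cdot_{\tau}v:=\tau(x)\cdot v$, a direct comparison of the formulas in Definition~\ref{defi01} shows that the vector space map $\gamma\left(h,\overline{h}\right)\mapsto\gamma\left(-h,-\overline{h}\right)$ is an isomorphism $\Gamma(-\lambda,a,b)^{\tau}\xrightarrow{\ \sim\ }\Theta(\lambda,a,b)$. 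Here the variable change $h\mapsto-h$, $\overline{h}\mapsto-\overline{h}$ compensates for the signs $\tau(h)=-h$, $\tau\left(\overline{h}\right)=-\overline{h}$, and one checks that it intertwines the remaining generators, forcing $a,b$ to be preserved while $\lambda$ changes sign. First I would verify this single isomorphism carefully, since everything else is formal.

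Next, using $(V^{\tau})^{*}\cong(V^{*})^{\tau}$, which follows at once from the definition $(y\cdot\varphi)(v)=-\varphi(y\cdot v)$, I would dualize to get $\Theta(\lambda,a,b)^{*}\cong\bigl(\Gamma(-\lambda,a,b)^{*}\bigr)^{\tau}$. Since $\tau$ sends $h$ to $-h$ and $\overline{h}$ to $-\overline{h}$, this equivalence negates all weights and all $\overline{h}$-eigenvalues; tracing the distinguished functionals $\eta_{\al_{k},\b^{s}}$ through it, up to rescaling of the basis vectors, identifies the submodule $N_{\al,\b}^{\lambda,a,b}\subseteq\Theta(\lambda,a,b)^{*}$ with $\bigl(M_{-\al,-\b}^{-\lambda,a,b}\bigr)^{\tau}\subseteq\bigl(\Gamma(-\lambda,a,b)^{*}\bigr)^{\tau}$. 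With this dictionary in hand, each part of Theorem~\ref{vztheo4.1} transfers mechanically, since twisting by $\tau$ and dualizing are exact autoequivalences and hence preserve simplicity, module length, submodule lattices and isomorphism classes.

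Concretely, part~(i) holds because simplicity is preserved, and the condition $(\al_{k}+2s)\b+b\neq0$ for $M_{-\al,-\b}^{-\lambda,a,b}$ becomes, after substituting $\al\mapsto-\al$, $\b\mapsto-\b$ and replacing $k$ by $-k$ (harmless, as $k$ ranges over all of $\Z$), exactly the condition $(\al_{k}-2s)\b+b\neq0$, while $\b^{2}+a$ is invariant. For part~(ii), the only genuinely new point is that $\tau$ interchanges $\overline{\mathfrak{n}}_{+}$ and $\overline{\mathfrak{n}}_{-}$, so the lowest weight (opposite) Verma submodule in Theorem~\ref{vztheo4.1}(ii) is carried to a highest weight Verma submodule $N'_{k,s}$, with the corresponding quotient remaining a generalized weight module with infinite dimensional weight spaces. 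Parts~(iii) and~(iv) are immediate: the $\lambda$-independence transfers verbatim, and in~(iv) the relation $\al-\al'\in2\Z$ is symmetric under negation, hence preserved by the weight-reversing equivalence, as are $\b=\b'$, $a=a'$ and $b=b'$.

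I expect the main obstacle to be purely bookkeeping: pinning down the exact parameter dictionary through the composite of the variable substitution, the $\tau$-twist and the dualization, and checking that this composite sends the distinguished spanning set $\{\eta_{\al_{k},\b^{s}}\}$ to scalar multiples of $\{\eta_{-\al_{k},(-\b)^{s}}\}$ so that the indices line up as claimed. As a fallback, all four statements can instead be proved directly by mirroring the proof of Theorem~\ref{vztheo4.1}, since the action formulas in Proposition~\ref{defi3.21} are term-by-term analogues of those in Proposition~\ref{defi3.1}; that route is longer but sidesteps the sign subtleties in the dictionary.
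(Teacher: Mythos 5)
Your proposal is correct, but it takes a genuinely different route from the paper: the paper's entire proof of Theorem \ref{vztheo4.2} consists of the remark that the argument is ``similar to that of Theorem \ref{vztheo4.1}'', i.e.\ the paper intends precisely the direct, mirrored computation that you mention only as a fallback. Your functorial argument checks out in all details. For each $x\in\left\{e,f,\overline{e},\overline{f},h,\overline{h}\right\}$ one verifies from Definition \ref{defi01} that $\gamma\left(h,\overline{h}\right)\mapsto\gamma\left(-h,-\overline{h}\right)$ intertwines the $\tau$-twisted action of $\Gamma(-\lambda,a,b)$ with the action of $\Theta(\lambda,a,b)$: the $\overline{f}$-relation forces the sign flip $\lambda\mapsto-\lambda$, while $a$ survives because $\left(-\overline{h}\right)^{2}=\overline{h}^{2}$ and $b$ survives because $(-h+2)\left(-\overline{h}\right)=(h-2)\overline{h}$. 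The identity $\left(V^{\tau}\right)^{*}=\left(V^{*}\right)^{\tau}$ is immediate from the sign convention $(y\cdot\varphi)(v)=-\varphi(y\cdot v)$, and dualizing the above map sends $\eta_{\al_{k},\b^{s}}$ to $(-1)^{s-1}\eta_{(-\al)_{-k},(-\b)^{s}}$, which is exactly the rescaling and reindexing you anticipate; hence $N_{\al,\b}^{\lambda,a,b}\cong\bigl(M_{-\al,-\b}^{-\lambda,a,b}\bigr)^{\tau}$, the reindexing $k\mapsto -k$ being harmless since $k$ ranges over $\Z$. The parameter translation in (i) is as you state, $\bigl((-\al)_{-k}+2s\bigr)(-\b)+b=(\al_{k}-2s)\b+b$ with $\b^{2}+a$ invariant; in (ii), $\tau$ exchanges $\overline{\mathfrak{n}}_{+}$ and $\overline{\mathfrak{n}}_{-}$ and negates weights without changing weight-space dimensions, so the lowest-weight (opposite) Verma submodule becomes the highest-weight Verma submodule $N'_{k,s}$ and the quotient stays a generalized weight module with infinite dimensional weight spaces; (iii) and (iv) transfer formally because twisting by an automorphism and dualizing are equivalences, the stray $-\lambda$ being absorbed by Theorem \ref{vztheo4.1}(iii). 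What your route buys is economy and a conceptual explanation of why the two theorems are term-by-term mirror images --- it is the same mechanism the paper itself uses, right after Definition \ref{defi01}, to reduce the module-axiom check for $\Theta(\lambda,a,b)$ to that for $\Gamma(\lambda,a,b)$; what the paper's (omitted) route buys is independence from the sign and index bookkeeping you flag, at the cost of redoing four computations verbatim.
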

\begin{proof}The proof is similar to  that of Theorem \ref{vztheo4.1}, we omit the details.
\end{proof}
\begin{remark}
\begin{itemize}
  \item[\rm(1)]\rm{ Due to Theorem \ref{mzh4} (i), we observe that there is a close connection between the $\g$-modules  $N_{\al,\b}^{\lambda,a,b}$ and $M_{\al,\b}^{\lambda,a,b}$.
  \item[\rm(2)]
       We can also use Mathieu's twisting functors $B'_{z}$ with $z\in \C$ and show that $B'_{z}N_{\al,\b}^{\lambda,a,b}=N_{\al+2z,\b}^{\lambda,a,b}$. The construction of $B'_{z}$ is similar to $B_{z}$, which is constructed in Section 4.1.2 and we leave the detail to readers.}
\end{itemize}

\end{remark}
\subsection{Weight modules associated to $\Omega\left(\lambda,a,\beta_{1}\left(\overline{h}\right)\right)$}
In this section, we will construct and investigate a class of weight modules related to the non-weight module $\Omega\left(\lambda,a,\beta_{1}\left(\overline{h}\right)\right)$, which is defined in Definition $\ref{defi01}$. For $\al,\b,a\in \C$, $\lambda\in \C^{\times}$, $m\in \Z_{+}$ and $\beta_{1}\left(\overline{h}\right)=\sum_{r=0}^{m}q_{r}\overline{h}^{r}\in \C\left[\,\overline{h}\,\right]$ with $q_{r}\in \C$, let $V_{\al,\b}^{\lambda,a,\beta_{1}\left(\overline{h}\right)}$ be the submodule of $\Omega\left(\lambda,a,\beta_{1}\left(\overline{h}\right)\right)^{*}$, spanned by the linear independent collection $\{\eta_{\al_{k},\b^{s}}\,|\,k\in \Z, s\in \N\}$.
Then the following proposition gives a precise action of $\g$ on $V_{\al,\b}^{\lambda,a,\beta_{1}\left(\overline{h}\right)}$.
\begin{prop}\label{prop4.9}Let $k\in \Z$, $s\in \N$ and $V_{\al,\b}^{\lambda,a,\beta_{1}\left(\overline{h}\right)}$ be the $\g$-module defined above. Then the action of $\g$ on this module is  as follows:
\begin{eqnarray}&\!\!\!\!\!\!\!\!\!\!\!\!\!\!\!\!\!\!\!\!\!\!\!\!&
 \overline{h}\cdot \eta_{\al_{k},\b^{s}} =-\b\eta_{\al_{k},\b^{s}}-(s-1)\eta_{\al_{k},\b^{s-1}},\ \ \ \ \ \ \
  h\cdot \eta_{\al_{k},\b^{s}} =-\al_{k}\eta_{\al_{k},\b^{s}},\label{xy4.17}\\
 &\!\!\!\!\!\!\!\!\!\!\!\!\!\!\!\!\!\!\!\!\!\!\!\!&
  \overline{f}\cdot \eta_{\al_{k},\b^{s}}= \frac{\b-a}{2\lambda}\eta_{\al_{k+1},\b^{s}}+\frac{s-1}{2\lambda}\eta_{\al_{k+1},\b^{s-1}},\label{xy4.18} \\
  &\!\!\!\!\!\!\!\!\!\!\!\!\!\!\!\!\!\!\!\!\!\!\!\!&
 \overline{e}\cdot \eta_{\al_{k},\b^{s}}= -\frac{\lambda(\b+a)}{2}\eta_{\al_{k-1},\b^{s}}-\frac{\lambda(s-1)}{2}\eta_{\al_{k-1},\b^{s-1}},\label{xy4.19} \\
 &\!\!\!\!\!\!\!\!\!\!\!\!\!\!\!\!\!\!\!\!\!\!\!\!&
e\cdot \eta_{\al_{k},\b^{s}} = -\frac{\lambda\al_{k-s+1}+2\al_{1}(\b)}{2}\eta_{\al_{k-1},\b^{s}}+\lambda(\b+a)\eta_{\al_{k-1},\b^{s+1}}
 \nonumber\\\!\!\!\!\!\!\!\!\!\!\!\!&\!\!\!\!\!\!\!\!\!\!\!\!\!\!\!&
 \ \ \ \ \ \ \ \ \ \ \ \ \ \ \,
 -\SUM{\substack{l=1}}{m}\SUM{\substack{i=0\\ i\neq l}}{l}\frac{p_{l}\binom{l}{i}\b^{i}(s-1)!}{(s-l+i-1)!}\eta_{\al_{k-1},\b^{s-l+i}},\label{xy4.20}\\
 &\!\!\!\!\!\!\!\!\!\!\!\!\!\!\!\!\!\!\!\!\!\!\!\!&
 f\cdot\eta_{\al_{k},\b^{s}}= \frac{\al_{k+s-1}-2\lambda\b_{1}(\b)}{2\lambda}\eta_{\al_{k+1},\b^{s}}+\frac{\b-a}{\lambda}\eta_{\al_{k+1},\b^{s+1}}
 \nonumber\\\!\!\!\!\!\!\!\!\!\!\!\!&\!\!\!\!\!\!\!\!\!\!\!\!\!\!\!&
  \ \ \ \ \ \ \ \ \ \ \ \ \ \ \,-\SUM{\substack{r=1}}{m}\SUM{\substack{j=0\\ j\neq r}}{r}\frac{q_{r}\binom{r}{j}\b^{j}(s-1)!}{(s-r+j-1)!}\eta_{\al_{k+1},\b^{s-r+j}},\label{xy4.21}
\end{eqnarray}
where $\b_{1}\left(\b\right)=\sum_{r=0}^{m}q_{r}\b^{r}$ and $\al_{1}\left(\b\right)=\sum_{l=0}^{m}p_{l}\b^{l}$ with $q_{r},p_{l}\in \C$ and satisfy (\ref{1}).\end{prop}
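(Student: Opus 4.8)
The plan is to compute each of the actions (\ref{xy4.17})--(\ref{xy4.21}) directly from the definition of the dual module, $(y\cdot\varphi)(v)=-\varphi(y\cdot v)$, combined with the explicit $\Omega\left(\lambda,a,\beta_{1}\left(\overline{h}\right)\right)$-action recorded in Definition \ref{defi01} (where the parameter written $b$ there now plays the role of $a$). Concretely, I would fix $i,j\in\Z_{+}$, set $\gamma\left(h,\overline{h}\right)=\left(h-\al_{k\mp1}\right)^{i}\left(\overline{h}-\b\right)^{j}$ (the sign chosen according to whether $y$ is a raising or lowering generator), and evaluate $\left(y\cdot\eta_{\al_{k},\b^{s}}\right)$ on $\gamma\left(h\mp2,\overline{h}\right)$. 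Since every $\Omega$-action shifts $h$ by $\pm2$, the substitution $\gamma\left(h\mp2,\overline{h}\right)$ recenters the monomial at $\al_{k}$, so that after applying the defining property $\eta_{\al_{k},\b^{s}}\left(\left(h-\al_{k}\right)^{i}\left(\overline{h}-\b\right)^{j}\right)=\delta_{i,0}\delta_{j,s-1}(s-1)!$ one reads off the coefficient of each target functional $\eta_{\al_{k\pm1},\b^{s'}}$ by matching values on the spanning monomials. This is exactly the mechanism already carried out in the proof of Proposition \ref{defi3.1}, and by linearity in $\gamma$ it suffices to verify each formula on these monomials.

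The computations yielding (\ref{xy4.17})--(\ref{xy4.19}) are routine: for $h,\overline{h},\overline{e},\overline{f}$ the $\Omega$-action multiplies $\gamma$ by a polynomial of degree at most one in $\overline{h}$ and shifts $h$, so I only need the rewritings $h=\left(h-\al_{k}\right)+\al_{k}$, $\overline{h}=\left(\overline{h}-\b\right)+\b$ and $\overline{h}\pm a=\left(\overline{h}-\b\right)+\left(\b\pm a\right)$ to re-expand in the basis $\left(\overline{h}-\b\right)^{j}$ before applying $\eta_{\al_{k},\b^{s}}$. The factor $\delta_{i,0}$ kills the $\left(h-\al_{k}\right)$ term (leaving the $\al_{k}$-multiple), while the identity $(s-1)(s-2)!=(s-1)!$ reconciles the factorials; this produces the derivation term $-(s-1)\eta_{\al_{k},\b^{s-1}}$ in the $\overline{h}$-action and the two-term expressions (\ref{xy4.18}) and (\ref{xy4.19}).

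The genuine work is in (\ref{xy4.20}) and (\ref{xy4.21}), the $e$- and $f$-actions, because these involve the full polynomials $\al_{1}\left(\overline{h}\right)$, $\beta_{1}\left(\overline{h}\right)$ of degree $m$ together with the derivative term $\overline{\partial}\left(\gamma\left(h\mp2,\overline{h}\right)\right)$. The key device is the Taylor expansion around $\overline{h}=\b$: using $\overline{h}^{r}=\sum_{j=0}^{r}\binom{r}{j}\b^{j}\left(\overline{h}-\b\right)^{r-j}$ I would write $\beta_{1}\left(\overline{h}\right)=\beta_{1}(\b)+\sum_{r=1}^{m}q_{r}\sum_{j=0}^{r-1}\binom{r}{j}\b^{j}\left(\overline{h}-\b\right)^{r-j}$, and similarly for $\al_{1}$. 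On the diagonal ($s'=s$) the $h$-piece contributes $\tfrac{1}{2\lambda}\al_{k}$ (resp.\ $\tfrac{\lambda}{2}\al_{k}$), the constant Taylor part contributes $-\beta_{1}(\b)$ (resp.\ $-\al_{1}(\b)$), and the $\left(\overline{h}-\b\right)$-part of the derivative term contributes the $\tfrac{1}{\lambda}(s-1)$ (resp.\ $-\lambda(s-1)$) that shifts $\al_{k}$ into $\al_{k+s-1}$ (resp.\ $\al_{k-s+1}$), recombining into the stated coefficients $\tfrac{\al_{k+s-1}-2\lambda\beta_{1}(\b)}{2\lambda}$ and $-\tfrac{\lambda\al_{k-s+1}+2\al_{1}(\b)}{2}$; the $\left(\overline{h}\mp a\right)\overline{\partial}$ terms give the $\eta_{\al_{k\pm1},\b^{s+1}}$ contributions with coefficient $\tfrac{\b-a}{\lambda}$ (resp.\ $\lambda(\b+a)$). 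The remaining, non-constant Taylor parts, multiplied by $\left(\overline{h}-\b\right)^{j}$ and hit by $\eta_{\al_{k},\b^{s}}$ (which forces the total $\left(\overline{h}-\b\right)$-degree to equal $s-1$), produce precisely the double sums $\sum_{r=1}^{m}\sum_{j\neq r}\tfrac{q_{r}\binom{r}{j}\b^{j}(s-1)!}{(s-r+j-1)!}$, the excluded term $j=r$ being exactly the constant part already absorbed into $\beta_{1}(\b)$ (resp.\ $\al_{1}(\b)$). The main obstacle, demanding the most care, is this last bookkeeping: tracking how the Taylor index $r-j$ relocates the target power to $\left(\overline{h}-\b\right)^{s-r+j}$ and how the factorial $(s-1)!$ coming from $\eta_{\al_{k},\b^{s}}$ interacts with the derivative-induced falling factorials, so that the summation ranges, the excluded index, and the quotient $\tfrac{(s-1)!}{(s-r+j-1)!}$ emerge precisely as stated.
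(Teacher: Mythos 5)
Your proposal is correct and follows exactly the route the paper intends: the paper omits this proof with the remark that it is ``similar to that of Proposition \ref{defi3.1}'', and your plan is precisely that dual-evaluation computation $\left(y\cdot\varphi\right)(v)=-\varphi(y\cdot v)$ on the monomials $\left(h-\al_{k\mp1}\right)^{i}\left(\overline{h}-\b\right)^{j}$, carried through with the extra Taylor expansion of $\al_{1}\left(\overline{h}\right)$ and $\beta_{1}\left(\overline{h}\right)$ at $\overline{h}=\b$ that the $\Omega$-case requires. In fact your write-up supplies the bookkeeping the paper leaves implicit (the recombination $-\tfrac{\lambda}{2}\al_{k}+\lambda(s-1)=-\tfrac{\lambda}{2}\al_{k-s+1}$, the excluded index $i=l$ absorbed into $\al_{1}(\b)$, and the factorial quotient $\tfrac{(s-1)!}{(s-l+i-1)!}$), all of which check out.
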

\begin{proof}The proof of this proposition is similar to that of Proposition \ref{defi3.1}, we omit it.\end{proof}
\begin{theo}\label{theo4.7}Let $V_{\al,\b}^{\lambda,a,\beta_{1}\left(\overline{h}\right)}$ be a $\g$-module defined in Proposition \ref{prop4.9}. Then the following holds:
\begin{itemize}
  \item[\rm(i)] $V_{\al,\b}^{\lambda,a,\beta_{1}\left(\overline{h}\right)}$ is a simple $\g$-module if and only if one of the following conditions is satisfied:
      \begin{itemize}
      \item[\rm(1)]$\b\neq \pm a$,
      \item[\rm(2)] $\b=a$, $a\neq0$ and $\al_{k+s-1}-2\lambda \b_{1}\left(\b\right)\neq 0$ for any $k\in \Z$ and $s\in \N$,
      \item[\rm(3)] $\b=-a$, $a\neq0$ and $\lambda\al_{k-s+1}+2 \al_{1}\left(\b\right)\neq 0$ for any $k\in \Z$ and $s\in \N$.
      \end{itemize}
  \item[\rm(ii)] For any $\lambda\in \C^{\times}$, $\al,\b,\al',\b',a,a'\in \C$ and $\beta_{1}\left(\overline{h}\right),\beta'_{1}\left(\overline{h}\right)\in \C\left[\,\overline{h}\,\right]$, we have $V_{\al,\b}^{\lambda,a,\beta_{1}\left(\overline{h}\right)}\cong V_{\al',\b'}^{\lambda,a',\beta'_{1}\left(\overline{h}\right)}$ if and only if $\al-\al'\in 2\Z$, $\b=\b'$, $a=a'$ and $\beta_{1}\left(\overline{h}\right)=\beta'_{1}\left(\overline{h}\right)$.
\end{itemize}\end{theo}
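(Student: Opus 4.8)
The plan is to adapt, essentially verbatim, the strategy used for Theorem~\ref{vztheo4.1}, replacing the $\Gamma$-action by (\ref{xy4.17})--(\ref{xy4.21}). For part (i) I would first isolate a reduction step that works uniformly across all cases. Given a nonzero submodule $N$ and $0\neq n\in N$, applying $h$ and using the $h$-eigenvalue decomposition projects $n$ onto a single $h$-weight space $V^{k}:=\mathrm{span}\{\eta_{\al_{k},\b^{s}}\mid s\in\N\}$, so $V^{k}\cap N\neq0$. Since (\ref{xy4.17}) gives $(\overline{h}+\b)\cdot\eta_{\al_{k},\b^{s}}=-(s-1)\eta_{\al_{k},\b^{s-1}}$, the operator $\overline{h}+\b$ strictly lowers the top $s$-index inside $V^{k}$; iterating it produces a nonzero multiple of $\eta_{\al_{k},\b}$. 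Hence every nonzero submodule contains some $\eta_{\al_{k},\b}$, and simplicity is equivalent to the statement that each $\eta_{\al_{k},\b}$ generates the whole module.

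The generation step is controlled by which leading coefficients in (\ref{xy4.18})--(\ref{xy4.21}) survive at $s=1$, where all subleading terms vanish: $\overline{e}\cdot\eta_{\al_{k},\b}=-\tfrac{\la(\b+a)}{2}\eta_{\al_{k-1},\b}$, $\overline{f}\cdot\eta_{\al_{k},\b}=\tfrac{\b-a}{2\la}\eta_{\al_{k+1},\b}$, while the $s$-raising terms of $e$ and $f$ carry the coefficients $\la(\b+a)$ and $\tfrac{\b-a}{\la}$. In Case~(1), $\b\neq\pm a$, both $\overline{e},\overline{f}$ act invertibly on the $s=1$ line, yielding all $\eta_{\al_{j},\b}$, and then $e$ (top coefficient $\la(\b+a)\neq0$) builds up the $s$-index by downward induction after subtracting the lower terms, so $N$ is everything. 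In Case~(2), $\b=a\neq0$, the operator $\overline{f}$ loses its leading term, but $\overline{e}\cdot\eta_{\al_{j},\b}=-\la a\,\eta_{\al_{j-1},\b}$ and $f\cdot\eta_{\al_{j},\b}=\tfrac{\al_{j}-2\la\b_{1}(\b)}{2\la}\eta_{\al_{j+1},\b}$ remain, the latter nonzero exactly under the hypothesis $\al_{k+s-1}-2\la\b_{1}(\b)\neq0$, so all $\eta_{\al_{j},\b}$ are reached, and $e$ again raises $s$ because $\la(\b+a)=2\la a\neq0$. Case~(3), $\b=-a\neq0$, is the mirror image with $(\overline{e},e)\leftrightarrow(\overline{f},f)$ and surviving diagonal coefficient $\la\al_{k-s+1}+2\al_{1}(\b)$.

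For the converse, I would exhibit a proper nonzero submodule in each degenerate regime. If $\b=a=0$, a direct check using (\ref{xy4.17})--(\ref{xy4.21}) shows $\overline{e},\overline{f}$ annihilate every $\eta_{\al_{k},\b}$ while $e,f$ (whose $s$-raising coefficients $\la(\b+a),\tfrac{\b-a}{\la}$ now vanish) preserve $\mathrm{span}\{\eta_{\al_{k},\b}\mid k\in\Z\}$; this $s=1$ subspace is therefore a proper nonzero submodule. If $\b=a\neq0$ and $\al_{j_{0}}=2\la\b_{1}(\b)$ for some $j_{0}$, then $f\cdot\eta_{\al_{j_{0}},\b}=\overline{f}\cdot\eta_{\al_{j_{0}},\b}=0$, so $\eta_{\al_{j_{0}},\b}$ is a lowest weight vector; by the triangular decomposition $\g=\overline{\mathfrak{n}}_{+}\oplus\overline{\mathfrak{h}}\oplus\overline{\mathfrak{n}}_{-}$ and PBW, $U(\g)\eta_{\al_{j_{0}},\b}=U(\overline{\mathfrak{n}}_{+})\eta_{\al_{j_{0}},\b}\subseteq\mathrm{span}\{\eta_{\al_{k},\b^{s}}\mid k\le j_{0}\}$, a proper submodule. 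The case $\b=-a\neq0$ is symmetric, producing a highest weight vector whose submodule is supported on $k\ge j_{0}$.

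For part (ii), sufficiency is the reindexing isomorphism $\eta_{\al_{k},\b^{s}}\mapsto\eta_{\al'_{k+n_{0}},\b^{s}}$ when $\al=\al'+2n_{0}$, which one checks respects (\ref{xy4.17})--(\ref{xy4.21}). For necessity, given an isomorphism $\psi$, comparing the sets of $h$-weights forces $\al-\al'\in2\Z$; since $\eta_{\al_{k},\b}$ is a genuine $\overline{h}$-eigenvector whereas the $s\ge2$ vectors are not, $\psi(\eta_{\al_{k},\b})=d_{k}\eta_{\al'_{n_{0}+k},\b'}$, giving $\b=\b'$. Because $[\overline{h},\overline{e}]=[\overline{h},\overline{f}]=0$ in $\g$, the operator $\overline{f}\,\overline{e}$ is canonical and acts on the genuine weight line by the intrinsic scalar $-\tfrac{\b^{2}-a^{2}}{4}$, whence $a^{2}=a'^{2}$. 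I expect the main obstacle to be precisely this last point: ruling out $a=-a'$ and upgrading the scalar identity $\b_{1}(\b)=\b'_{1}(\b)$ to the polynomial identity $\beta_{1}(\overline{h})=\beta'_{1}(\overline{h})$. To resolve it I would propagate $\psi$ to the $s=2$ vectors through the $e$-action (solvable once $\b+a\neq0$, with the $\b=\pm a$ cases handled by the surviving operator), extract the diagonal coefficients $-\tfrac{\la\al_{k}+2\al_{1}(\b)}{2}$ and $\tfrac{\al_{k}-2\la\b_{1}(\b)}{2\la}$, which depend on $a$ asymmetrically through the linear system (\ref{1}) tying $\al_{1}$ to $\beta_{1}$, and then read off all coefficients $q_{r}$ of $\beta_{1}$ from the sum-terms in (\ref{xy4.20})--(\ref{xy4.21}) by comparing successive $s$; matching these against the $a\leftrightarrow-a$ asymmetry of (\ref{1}) should simultaneously pin down $a=a'$ and $\beta_{1}=\beta'_{1}$.
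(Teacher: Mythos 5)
Your part (i) is sound, and in fact supplies the details the paper itself omits (it only says the proof is ``similar to Theorem \ref{vztheo4.1}''): the reduction via $h$ and the nilpotent operator $\overline{h}+\b$ to some $\eta_{\al_{k},\b}$, the case analysis on the surviving leading coefficients $\lambda(\b+a)$, $\tfrac{\b-a}{2\lambda}$, $\al_{k}-2\lambda\b_{1}(\b)$, $\lambda\al_{k}+2\al_{1}(\b)$, and your proper submodules in the degenerate regimes (the $s=1$ span when $\b=a=0$; the lowest/highest weight submodules when $\b=\pm a\neq0$ and the diagonal coefficient vanishes) all check out against the actions (\ref{xy4.17})--(\ref{xy4.21}). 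The opening of part (ii) is also fine: $\al-\al'\in2\Z$, $\b=\b'$, $\psi(\eta_{\al_{k},\b})=d_{k}\eta_{\al_{k},\b}$, and $a^{2}=(a')^{2}$ --- your $\overline{f}\,\overline{e}$ scalar $-\tfrac{\b^{2}-a^{2}}{4}$ is a slicker packaging of the paper's two relations $(\b+a')d_{k}=(\b+a)d_{k-1}$ and $(\b-a')d_{k}=(\b-a)d_{k+1}$ in (\ref{xy4.22}).

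The genuine gap is exactly where you switch from proving to expecting, and it covers the two load-bearing steps of (ii). First, $a'=a$: your plan to ``match the diagonal coefficients against the $a\leftrightarrow-a$ asymmetry of (\ref{1})'' is never turned into an argument, and it does not obviously close, because the unknown images $\psi(\eta_{\al_{k\pm1},\b^{2}})$ enter the $e$- and $f$-intertwining relations with coefficients $\lambda(\b+a)$ and $(\b-a)/\lambda$ and must be eliminated before anything can be compared. The paper does this concretely: assuming $a'=-a\neq0$ (which forces $\b\pm a\neq0$ and $d_{k}=\tfrac{\b+a}{\b-a}d_{k-1}$ by (\ref{xy4.22})), it writes out all four expressions $e\cdot\psi(\eta_{\al_{k},\b})$, $\psi(e\cdot\eta_{\al_{k},\b})$, $f\cdot\psi(\eta_{\al_{k},\b})$, $\psi(f\cdot\eta_{\al_{k},\b})$, eliminates the $\b^{2}$-components between the two pairs to get a linear relation in $d_{k},d_{k-1}$ with coefficients involving $\al_{k\pm1}$, and derives a contradiction with (\ref{xy4.22}) unless $a=0$. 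Second, your propagation of $\psi$ to $s\geq2$ through the $e$-action fails precisely in the cases where it is needed: the raising coefficient $\lambda(\b+a)$ vanishes when $\b=-a$, the $f$-coefficient $(\b-a)/\lambda$ vanishes when $\b=a$, and when $\b=a=0$ \emph{both} vanish, so no ``surviving operator'' raises $s$ at all --- yet the isomorphism classification must still be proved in that (non-simple) case. The paper instead inducts on $s$ using only the $\overline{h}$-action (its Claim: after normalizing $d_{k}=1$ when $a\neq0$, one gets $\varphi(\eta_{\al_{k},\b^{s}})=\eta_{\al_{k},\b^{s}}$ for all $s$), and handles $a=0$, $\b=0$ separately via $\overline{e}$ and $\overline{h}$ acting on a general expansion of $\varphi(\eta_{\al_{k},\b^{2}})$. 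Once that control of $\psi$ on all of the weight space is in place, your final step --- extracting $m=n$ and $q_{r}'=q_{r}$ from the double sums in (\ref{xy4.21}) by comparing coefficients of $\eta_{\al_{k+1},\b^{s-r}}$ for $s>m$ --- is indeed the paper's mechanism and works triangularly in $r$. So the skeleton matches the paper, but the crux of (ii) ($a'=a$, and the identification of $\psi$ on the $s\geq2$ vectors including the degenerate cases) is missing rather than merely compressed.
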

\begin{proof}The proof of statement (i) is similar to that of Theorem \ref{vztheo4.1}, we omit the detail.

Now we prove statement (ii). The sufficiency of the conditions is clear. Suppose that $\varphi:V_{\al,\b}^{\lambda,a,\beta_{1}\left(\overline{h}\right)}\longrightarrow V_{\al',\b'}^{\lambda,a',\beta_{1}'\left(\overline{h}\right)}$ is the isomorphism of $U(\g)$-modules. Let $\eta_{\al_{k},\b^{s}}\in V_{\al,\b}^{\lambda,a,\beta_{1}\left(\overline{h}\right)}$ with $k\in \Z$ and $s\in \N$. Then, by a similar proof of Theorem \ref{vztheo4.1} (iv), we obtain that 
\begin{align}\label{xy4.23}
\al-\al'\in 2\Z,\ \ \ \ \b=\b'\ \ \ \ \mbox{and}\ \ \ \ \varphi\left(\eta_{\al_{k},\b}\right)=d_{k}\eta_{\al_{k},\b} \ \ \ \mbox{for\ some}\ \ d_{k}\in \C^{\times}.
\end{align}
Applying $\overline{e}$ and $\overline{f}$ on $\varphi\left(\eta_{\al_{k},\b^{s}}\right)$, respectively, from (\ref{xy4.18}) and (\ref{xy4.19}), we get
\begin{align*}
\overline{e}\cdot \varphi\left(\eta_{\al_{k},\b}\right)&=-\frac{\lambda\left(\b+a'\right)}{2}d_{k}\eta_{\al_{k-1},\b},\ \ \ \
\varphi\left(\overline{e}\cdot \eta_{\al_{k},\b}\right)=-\frac{\lambda(\b+a)}{2}d_{k-1}\eta_{\al_{k-1},\b},\\
\overline{f}\cdot \varphi\left(\eta_{\al_{k},\b}\right)&=\frac{\b-a'}{2\lambda}d_{k}\eta_{\al_{k+1},\b},\ \ \ \ \ \ \ \ \ \
\varphi\left(\overline{f}\cdot \eta_{\al_{k},\b}\right)=\frac{\b-a}{2\lambda}d_{k+1}\eta_{\al_{k+1},\b}.
\end{align*}
Hence, we have \begin{align}\label{xy4.22}
\left(\b+a'\right)d_{k}=\left(\b+a\right)d_{k-1},\ \ \ \ \left(\b-a'\right)d_{k}=\left(\b-a\right)d_{k+1},
                   \end{align}
 which implies that  $\left(a'\right)^{2}=a^{2}$.

 In fact, $a'=a$. Otherwise, due to (\ref{xy4.22}), we can assume $\b+a\neq0$ and $a'=-a\neq0$. Then, considering the actions of $e,f$ on $\varphi\left(\eta_{\al_{k},\b^{s}}\right)$, according to (\ref{xy4.20}) and (\ref{xy4.21}), we have
 \begin{align*}
   e\cdot \varphi\left(\eta_{\al_{k},\b}\right)&=-\left(\frac{\lambda\al_{k}}{2}+\al_{1}'(\b)\right)d_{k}\eta_{\al_{k-1},\b}+ \lambda\left(\b+a'\right)d_{k}\eta_{\al_{k-1},\b^{2}},\\
 \varphi\left(e\cdot\eta_{\al_{k},\b}\right)&=-\left(\frac{\lambda\al_{k}}{2}+\al_{1}(\b)\right)d_{k-1}\eta_{\al_{k-1},\b}+ \lambda\left(\b+a\right)\varphi\left(\eta_{\al_{k-1},\b^{2}}\right),\\
  f\cdot \varphi\left(\eta_{\al_{k},\b}\right)&=\left(\frac{\al_{k}}{2\lambda}-\b_{1}'(\b)\right)d_{k}\eta_{\al_{k+1},\b}+ \frac{\left(\b-a'\right)}{\lambda}d_{k}\eta_{\al_{k+1},\b^{2}},\\
 \varphi\left(f\cdot\eta_{\al_{k},\b}\right)&=\left(\frac{\al_{k}}{2\lambda}-\b_{1}(\b)\right)d_{k+1}\eta_{\al_{k+1},\b}+ \frac{\left(\b-a\right)}{\lambda}\varphi\left(\eta_{\al_{k+1},\b^{2}}\right).
 \end{align*}
Then, it follows that \begin{align*}
                       \left(\frac{\al_{k-1}}{2\lambda}-\b_{1}(\b)\right)d_{k}
                       +\left(\frac{\al_{k+1}}{2\lambda}+\frac{\al_{1}(\b)}{\lambda^{2}}\right)d_{k-1}
                       -\left(\frac{\al_{k+1}}{2\lambda}+\frac{\al_{1}'(\b)}{\lambda^{2}}\right)d_{k}
                       =\left(\frac{\al_{k-1}}{2\lambda}-\b_{1}'(\b)\right)d_{k-1}.
                      \end{align*}
Take $k$ large enough, then we have $\frac{\al_{k-1}}{2\lambda}d_{k}+\frac{\al_{k+1}}{2\lambda}d_{k-1}-\frac{\al_{k+1}}{2\lambda}d_{k}=\frac{\al_{k-1}}{2\lambda}d_{k-1}$. Hence, from (\ref{xy4.22}), we have $a=0$, a contradiction. Hence $a'=a.$

Therefore, it remains to prove that $\beta_{1}\left(\overline{h}\right)=\beta'_{1}\left(\overline{h}\right)$. Write $\beta_{1}\left(\overline{h}\right)=\sum_{l=0}^{n}q_{l}\overline{h}^{l}$ and $\beta_{1}'\left(\overline{h}\right)=\sum_{r=0}^{m}q_{l}'\overline{h}^{r}$.  Then, we consider the following cases.

\noindent{\bf Case 1:} $a\neq0$. From (\ref{xy4.22}), we have $d_{k}=d_{k-1}$ for any $k\in \Z$. Without loss of generality, we assume $d_{k}=1$ for any $k\in \Z$.  Then we get the following claim.
\begin{clai}$\varphi\left(\eta_{\al_{k},\b^{s}}\right)=\eta_{\al_{k},\b^{s}}$ for any $k\in \Z$ and $s\in \N$.\end{clai}
We prove this claim by induction on $s$. According to (\ref{xy4.23}), we see that Claim 2 holds for $s=1$. Suppose that Claim 2 is true for $s>1$. For any $k\in \Z$, write $\varphi\left(\eta_{\al_{k},\b^{s+1}}\right)=\sum_{l=1}^{s+1}c_{k,l}\eta_{\al_{k},\b^{l}}$. Then, due to (\ref{xy4.17}), we obtain that
\begin{align*}
  \overline{h}\cdot\varphi\left(\eta_{\al_{k},\b^{s+1}}\right)&=-\b\varphi\left(\eta_{\al_{k},\b^{s
  +1}}\right)
-\SUM{\substack{l=1}}{s+1}c_{k,l}(l-1)\eta_{\al_{k},\b^{l-1}}, \\
 \varphi\left(\overline{h}\cdot\eta_{\al_{k},\b^{s+1}}\right)&=-\b\varphi\left(\eta_{\al_{k},\b^{s+1}}\right)-s\eta_{\al_{k},\b^{s}},
\end{align*}
which implies that $\varphi\left(\eta_{\al_{k},\b^{s+1}}\right)=\eta_{\al_{k},\b^{s+1}}$. Hence, the proof of the claim is completed.

Let $s\in \N$ and $s>m$. Then, applying $f$ on $\varphi\left(\eta_{\al_{k},\b^{s}}\right)$, we have
\begin{align*}
  f\cdot\varphi\left(\eta_{\al_{k},\b^{s}}\right)&=\frac{\al_{k+s-1}-2\lambda\b_{1}'(\b)}{2\lambda}\eta_{\al_{k+1},\b^{s}}
  +\frac{\b-a}{\lambda}\eta_{\al_{k+1},\b^{s+1}}-\SUM{r=1}{m}\SUM{\substack{j=0\\ j\neq r}}{r}\frac{q_{r}'\binom{r}{j}\b^{j}(s-1)!}{(s-r+j-1)!}\eta_{\al_{k+1},\b^{s-r+j}},  \\
 \varphi\left( f\cdot\eta_{\al_{k},\b^{s}}\right)&=\frac{\al_{k+s-1}-2\lambda\b_{1}(\b)}{2\lambda}\eta_{\al_{k+1},\b^{s}}
 +\frac{\b-a}{\lambda}\eta_{\al_{k+1},\b^{s+1}}-\SUM{l=1}{n}\SUM{\substack{i=0\\ i\neq l}}{l}\frac{q_{l}\binom{l}{i}\b^{i}(s-1)!}{(s-l+i-1)!}\eta_{\al_{k+1},\b^{s-l+i}}.
\end{align*}
Consequently, we get $\b_{1}'(\b)=\b_{1}(\b)$ and
\begin{align*}
 \SUM{r=1}{m}\SUM{\substack{j=0\\ j\neq r}}{r}\frac{q_{r}'\binom{r}{j}\b^{j}(s-1)!}{(s-r+j-1)!}\eta_{\al_{k+1},\b^{s-r+j}}=\SUM{l=1}{n}\SUM{\substack{i=0\\ i\neq l}}{l}\frac{q_{l}\binom{l}{i}\b^{i}(s-1)!}{(s-l+i-1)!} \eta_{\al_{k+1},\b^{s-l+i}}.
\end{align*}
Then, comparing the coefficients of $\eta_{\al_{k+1},\b^{s-r}}$ for $r\in \{1,2,\cdots,m\}$ in above equation, we obtain that $m=n$ and $q_{r}'=q_{r}$ for $r\in \{1,2,\cdots,m\}$. Thus, $q_{0}'=q_{0}$ and $\b_{1}'\left(\overline{h}\right)=\b_{1}\left(\overline{h}\right)$.

\noindent{\bf Case 2:} $a=0$. If $\b\neq0$, due to (\ref{xy4.22}), we have $d_{k}=d_{k-1}$ for any $k\in \Z$. If $\b=0$, we write $\varphi\left(\eta_{\al_{k},\b^{2}}\right)=c_{k,1}\eta_{\al_{k},\b}+c_{k,2}\eta_{\al_{k},\b^{2}}$ for some $c_{k,1}\in \C$ and $c_{k,2}\in \C^{\times}$. Acting by $\overline{e}$ and $\overline{h}$, respectively, on $\varphi\left(\eta_{\al_{k},\b^{2}}\right)$, we obtain
\begin{align*}
  \overline{e}\cdot\varphi\left(\eta_{\al_{k},\b^{2}}\right)&=-\frac{\lambda}{2}c_{k,2}\eta_{\al_{k-1},\b}, \ \ \ \ \varphi\left(\overline{e}\cdot\eta_{\al_{k},\b^{2}}\right)=-\frac{\lambda}{2}d_{k-1}\eta_{\al_{k-1},\b}, \\
  \overline{h}\cdot\varphi\left(\eta_{\al_{k},\b^{2}}\right)&=-c_{k,2}\eta_{\al_{k},\b}, \ \ \ \ \ \ \ \ \, \varphi\left(\overline{h}\cdot\eta_{\al_{k},\b^{2}}\right)=-d_{k}\eta_{\al_{k},\b},
\end{align*}
which implies that $d_{k-1}=c_{k,2}=d_{k}$ for any $k\in \Z$. Then, by a similar proof in Case 1, we get $\b_{1}'\left(\overline{h}\right)=\b_{1}\left(\overline{h}\right)$.

Therefore, according to above two cases, we complete the proof.
\end{proof}
\begin{remark}\rm{Theorem \ref{mzh4} {\rm(ii)} suggests a close relation between the $\g$-modules $V_{\al,\b}^{\lambda,a,\beta_{1}\left(\overline{h}\right)}$ and $M_{\al,\b}^{\lambda,a,b}$.}
\end{remark}
\subsection{Proof of Theorem \ref{mzh4}}
Here we just give the proof of statement (ii) of Theorem \ref{mzh4}. The proof of statement (i) is similar. Now we prove statement (ii). Let $\varphi':M_{\al,\b}^{\lambda,-a^{2},b}\longrightarrow V_{\al,\b}^{\lambda,a,\b_{1}\left(\overline{h}\right)}$ be the linear map defined by
\begin{equation*}
  \varphi':\eta_{\al_{k},\b^{s}}\longmapsto \frac{c^{k+s-1}}{(2\lambda)^{s-1}}e^{s-1}\cdot\eta_{\al_{k+s-1},\b},
\end{equation*}
where $c=\frac{2}{\b+a}$. The map $\varphi'$ is well defined and bijective. We claim that $\varphi'$ is a homomorphism of $U(\g)$-modules. Applying $\overline{e},e$ on $\varphi'\left(\eta_{\al_{k},\b^{s}}\right)$, respectively, and using (\ref{vz4.11}), (\ref{xy4.19}) and (\ref{xy4.20}), we obtain
\begin{align*}
\varphi'\left(\overline{e}\cdot\eta_{\al_{k},\b^{s}}\right)&=-\lambda\varphi'\left(\eta_{\al_{k-1},\b^{s}}\right)
=-\frac{\lambda c^{k+s-2}}{(2\lambda)^{s-1}}e^{s-1}\cdot\eta_{\al_{k+s-2},\b},\\
\overline{e}\cdot \varphi'\left(\eta_{\al_{k},\b^{s}}\right)&=\frac{c^{k+s-1}}{(2\lambda)^{s-1}}\overline{e} e^{s-1}\cdot\eta_{\al_{k+s-1},\b}
=-\frac{\lambda c^{k+s-2}}{(2\lambda)^{s-1}}e^{s-1}\cdot\eta_{\al_{k+s-2},\b},\\
\varphi'\left(e\cdot\eta_{\al_{k},\b^{s}}\right)&=2\lambda\varphi'\left(\eta_{\al_{k-1},\b^{s+1}}\right)=\frac{c^{k+s-1}}{(2\lambda)^{s-1}} e^{s}\cdot\eta_{\al_{k+s-1},\b},\\
e\cdot\varphi'\left(\eta_{\al_{k},\b^{s}}\right)&
=\frac{c^{k+s-1}}{(2\lambda)^{s-1}} e^{s}\cdot\eta_{\al_{k+s-1},\b},
\end{align*}
which implies that $\varphi'\left(y\cdot\eta_{\al_{k},\b^{s}}\right)=y\cdot\varphi'\left(\eta_{\al_{k},\b^{s}}\right)$, for $y=\overline{e},e$, respectively. Next, applying $h$ and $\overline{h}$ to $\varphi'\left(\eta_{\al_{k},\b^{s}}\right)$, respectively, and using (\ref{vz14.4}) and (\ref{xy4.17}), we have
\begin{align*}
\varphi'\left(h\cdot\eta_{\al_{k},\b^{s}}\right)&=-\al_{k}\varphi'\left(\eta_{\al_{k},\b^{s}}\right)=-\frac{\al_{k}c^{k+s-1}}{(2\lambda)^{s-1}} e^{s-1}\cdot\eta_{\al_{k+s-1},\b},\\
h\cdot\varphi'\left(\eta_{\al_{k},\b^{s}}\right)&=\frac{c^{k+s-1}}{(2\lambda)^{s-1}}h e^{s-1}\cdot\eta_{\al_{k+s-1},\b}=\frac{c^{k+s-1}}{(2\lambda)^{s-1}}e^{s-1}(h+2(s-1))\cdot\eta_{\al_{k+s-1},\b}\\
&=-\frac{\al_{k}c^{k+s-1}}{(2\lambda)^{s-1}} e^{s-1}\cdot\eta_{\al_{k+s-1},\b},\\
\varphi'\left(\overline{h}\cdot\eta_{\al_{k},\b^{s}}\right)&=-\b\varphi'\left(\eta_{\al_{k},\b^{s}}\right)
-(s-1)\varphi'\left(\eta_{\al_{k},\b^{s-1}}\right)\\
&=-\frac{\b c^{k+s-1}}{(2\lambda)^{s-1}}e^{s-1}\cdot\eta_{\al_{k+s-1},\b}-\frac{(s-1) c^{k+s-2}}{(2\lambda)^{s-2}}e^{s-2}\cdot\eta_{\al_{k+s-2},\b},\\
\overline{h}\cdot\varphi'\left(\eta_{\al_{k},\b^{s}}\right)&=\frac{c^{k+s-1}}{(2\lambda)^{s-1}}\overline{h}e^{s-1}\cdot\eta_{\al_{k+s-1},\b}
=\frac{c^{k+s-1}}{(2\lambda)^{s-1}}\left(2(s-1)e^{s-2}\overline{e}+e^{s-1}\overline{h}\right)\cdot\eta_{\al_{k+s-1},\b}\\
&=-\frac{\b c^{k+s-1}}{(2\lambda)^{s-1}}e^{s-1}\cdot\eta_{\al_{k+s-1},\b}-\frac{(s-1)c^{k+s-2}}{(2\lambda)^{s-2}}e^{s-2}\cdot\eta_{\al_{k+s-2},\b}.
\end{align*}
This implies that $\varphi'\left(y\cdot\eta_{\al_{k},\b^{s}}\right)=y\cdot\varphi'\left(\eta_{\al_{k},\b^{s}}\right)$ for $y=h,\overline{h}$, respectively. Moreover, letting $\overline{f}$ and $f$ act on $\varphi'\left(\eta_{\al_{k},\b^{s}}\right)$ and using (\ref{vz14.5}), (\ref{vz14.6}),  (\ref{xy4.18}) and (\ref{xy4.21}), we have that
\begin{eqnarray*}\!\!\!\!\!\!\!\!\!\!\!\!&\!\!\!\!\!\!\!\!\!\!\!\!\!\!\!&
\ \ \ \ \ \ \ \ \ \varphi'\left(\overline{f}\cdot\eta_{\al_{k},\b^{s}}\right)=\frac{K_{s}}{4\lambda}\varphi'\left(\eta_{\al_{k+1},\b^{s-2}}\right)
+\frac{(s-1)\b}{2\lambda}\varphi'\left(\eta_{\al_{k+1},\b^{s-1}}\right)+\frac{\b^{2}-a^{2}}{4\lambda}\varphi'\left(\eta_{\al_{k+1},\b^{s}}\right)
\nonumber\\\!\!\!\!\!\!\!\!\!\!\!\!&\!\!\!\!\!\!\!\!\!\!\!\!\!\!\!&
\ \ \ \ \ \ \ =\frac{c^{k+s-2}}{(2\lambda)^{s-3}}\left(\frac{K_{s}}{4\lambda}e^{s-3}\cdot\eta_{\al_{k+s-2},\b}
+\frac{(s-1)\b c}{4\lambda^{2}}e^{s-2}\cdot\eta_{\al_{k+s-1},\b}\right)
+\frac{c^{k+s-1}}{(2\lambda)^{s}}(\b-a)e^{s-1}\cdot\eta_{\al_{k+s},\b},
\nonumber\\\!\!\!\!\!\!\!\!\!\!\!\!&\!\!\!\!\!\!\!\!\!\!\!\!\!\!\!&
\ \ \ \ \ \ \ \ \ \overline{f}\cdot\varphi'\left(\eta_{\al_{k},\b^{s}}\right)=\frac{c^{k+s-1}}{(2\lambda)^{s-1}}\overline{f}e^{s-1}\cdot\eta_{\al_{k+s-1},\b} =-\frac{c^{k+s-1}}{(2\lambda)^{s-1}}\left(K_{s}e^{s-3}\overline{e}+(s-1)e^{s-2}\overline{h}-e^{s-1}\overline{f}\right)\cdot\eta_{\al_{k+s-1},\b}
\nonumber\\\!\!\!\!\!\!\!\!\!\!\!\!&\!\!\!\!\!\!\!\!\!\!\!\!\!\!\!&
\ \ \ \ \ \ \ =\frac{c^{k+s-2}}{(2\lambda)^{s-3}}\left(\frac{K_{s}}{4\lambda}e^{s-3}\cdot\eta_{\al_{k+s-2},\b}
+\frac{(s-1)\b c}{4\lambda^{2}}e^{s-2}\cdot\eta_{\al_{k+s-1},\b}\right)
+\frac{c^{k+s-1}}{(2\lambda)^{s}}(\b-a)e^{s-1}\cdot\eta_{\al_{k+s},\b},
\nonumber\\\!\!\!\!\!\!\!\!\!\!\!\!&\!\!\!\!\!\!\!\!\!\!\!\!\!\!\!&
\ \ \ \ \ \ \ \ \ \varphi'\left(f\cdot\eta_{\al_{k},\b^{s}}\right)=\frac{(s-1)(\al_{k}+s)}{2\lambda}\varphi'\left(\eta_{\al_{k+1},\b^{s-1}}\right)
+\frac{\b^{2}-a^{2}}{2\lambda}\varphi'\left(\eta_{\al_{k+1},\b^{s+1}}\right)
+\frac{\al_{k+s}\b+b}{2\lambda}\varphi'\left(\eta_{\al_{k+1},\b^{s}}\right)
\nonumber\\\!\!\!\!\!\!\!\!\!\!\!\!&\!\!\!\!\!\!\!\!\!\!\!\!\!\!\!&
\ \ \ \ \ \ \ =\frac{(s-1)(\al_{k}+s)}{2\lambda}\frac{c^{k+s-1}}{(2\lambda)^{s-2}}e^{s-2}\cdot\eta_{\al_{k+s-1},\b}
+\frac{\al_{k+s}\b+b}{2\lambda}\frac{c^{k+s}}{(2\lambda)^{s-1}}e^{s-1}\cdot\eta_{\al_{k+s},\b}
+\frac{c^{k+s}}{(2\lambda)^{s}}\frac{\b-a}{\lambda}e^{s}\cdot\eta_{\al_{k+s+1},\b},
\nonumber\\\!\!\!\!\!\!\!\!\!\!\!\!&\!\!\!\!\!\!\!\!\!\!\!\!\!\!\!&
\ \ \ \ \ \ \ \ \ f\cdot\varphi'\left(\eta_{\al_{k},\b^{s}}\right)=\frac{c^{k+s-1}}{(2\lambda)^{s-1}}fe^{s-1}\cdot\eta_{\al_{k+s-1},\b}
=-\frac{c^{k+s-1}}{(2\lambda)^{s-1}}\left((s-1)e^{s-2}(h+s-2)-e^{s-1}f\right)\cdot\eta_{\al_{k+s-1},\b}
\nonumber\\\!\!\!\!\!\!\!\!\!\!\!\!&\!\!\!\!\!\!\!\!\!\!\!\!\!\!\!&
\ \ \ \ \ \ \ =\frac{c^{k+s-1}}{(2\lambda)^{s-2}}\left(\frac{(s-1)(\al_{k}+s)}{2\lambda}e^{s-2}\cdot\eta_{\al_{k+s-1},\b}
+\frac{\al_{k+s-1}-2\lambda\b_{1}\left(\b\right)}{(2\lambda)^{2}}e^{s-1}\cdot\eta_{\al_{k+s},\b}
+\frac{2(\b-a)}{(2\lambda)^{2}}e^{s-1}\cdot\eta_{\al_{k+s},\b^{2}}\right)
\nonumber\\\!\!\!\!\!\!\!\!\!\!\!\!&\!\!\!\!\!\!\!\!\!\!\!\!\!\!\!&
\ \ \ \ \ \ \ =\frac{(s-1)(\al_{k}+s)}{2\lambda}\frac{c^{k+s-1}}{(2\lambda)^{s-2}}e^{s-2}\cdot\eta_{\al_{k+s-1},\b}
+\frac{c^{k+s}}{(2\lambda)^{s}}\frac{\b-a}{\lambda}e^{s}\cdot\eta_{\al_{k+s+1},\b}
+\frac{\al_{k+s}\b+P_{a}^{\b}}{(2\lambda)}\frac{c^{k+s}}{(2\lambda)^{s-1}}e^{s-1}\cdot\eta_{\al_{k+s},\b},
\end{eqnarray*}
where $K_{s}=(s-1)(s-2)$ and $P_{a}^{\b}=\frac{\b-a}{\lambda}\al_{1}\left(\b\right)-\lambda(\b+a)\b_{1}\left(\b\right)-2a$. Note that $b=-2 a\left(\lambda\b_{1}(a)+1\right)$. Then by (\ref{1}), we get $P_{a}^{\b}=b$. Then, it follows that $\varphi'\left(y\cdot\eta_{\al_{k},\b^{s}}\right)=y\cdot\varphi'\left(\eta_{\al_{k},\b^{s}}\right)$, for $y=\overline{f},f$, respectively. Thus $\varphi'$ is a homomorphism of $U(\g)$-modules and the proof of the theorem is completed.
\section*{Acknowledgments}
The author is very grateful to Volodymyr Mazorchuk for his many helpful discussions and ideas during this project.


\begin{thebibliography}{9999}\vskip0pt\small
\parindent=2ex\parskip=-1.5pt\baselineskip=-1.5pt\lineskip=3.0pt
\def\re#1{\bibitem{#1}\label{#1}}
\re{BM}P. Batra, V. Mazorchuk, Blocks and modules for Whittaker pairs, {\em J. Pure Appl. Algebra} {\bf215(7)} (2011), 1552--1568.
\re{RB}R. Block, The irreducible representations of the Lie algebra $\mathfrak{sl}(2)$ and of the Weyl algebra, {\em Adv. Math.} {\bf39 (1)} (1981), 69--110.
\re{EC}E. Cartan, Les groupes projectifs qui ne laissent invariante aucune multiplicit\'{e} plane, {\em Bull. Soc. Math. France} {\bf41} (1913) 53--96.
\re{MC}M. Chaffe, Category $\mathcal{O}$ for Takiff Lie algebras, Preprint arXiv:2205.03121.
\re{CC}Q. Chen, Y. Cai, Modules over algebras related to the Virasoro algebra, {\em Internat. J. Math.} {\bf26} (2015), 1550070.
\re{CG} H. Chen, X. Guo, Non-weight modules over the Heisenberg-Virasoro algebra and the $W$ algebra $W(2, 2)$, {\em J. Algebra Appl.} {\bf16} (2017), 1750097.
\re{CG1}H. Chen, X. Guo, A new family of modules over the Virasoro algebra, {\em J. Algebra} {\bf457} (2016), 73--105.
\re{CY} Q. Chen, Y. Yao, Non-weight modules over algebras related to the Virasoro algebra, {\em J. Geom. Phys.} {\bf134} (2018), 11--18.
\re{CCM}C.-W. Chen, K. Coulembier, V. Mazorchuk, Translated simple modules for Lie algebras and simple supermodules for Lie superalgebras, {\em Mathematische Zeitschrift} {\bf 297} (2021), 255--281.
\re{CM}C.-W. Chen, V. Mazorchuk, Simple supermodules over Lie superalgebras, {\em Trans. Amer. Math. Soc.} {\bf374} (2021), 899--921.
\re{JD}J. Dixmier, Enveloping Algebras, American Mathematical Society, 1977.
\re{DFO} Yu. Drozd, V. Futorny, S. Ovsienko. Harish-Chandra subalgebras and Gelfand-Zetlin modules.
Finite-dimensional algebras and related topics (Ottawa, ON, 1992), 79--93, NATO Adv. Sci.
Inst. Ser. C: Math. Phys. Sci., {\bf 424}, Kluwer Acad. Publ., Dordrecht, 1994.
\re{EMV} N. Early, V. Mazorchuk, E. Vishnyakova, Canonical Gelfand-Zeitlin modules over orthogonal Gelfand-Zeitlin algebras, {\em International Mathematics Research Notices} {\bf 2020(20)} (2020), 6947--6966.
\re{SF} S. Fernando, Lie algebra modules with finite dimensional weight spaces, I, {\em Trans. Amer. Math. Soc.} {\bf 322} (1990), 757--781.
\re{JEH} J. E. Humphreys, Representations of Semisimple Lie Algebras in the BGG Category $\mathcal{O}$, American Mathematical Society, 2008.
\re{BK}B. Kostant, On Whittaker vectors and representation theory, {\em Invent. Math.} {\bf48 (2)} (1978), 101--184.
\re{ML} M. Lau, Classification of Harish-Chandra modules for current algebras, {\em Proc. Amer. Math. Soc.} {\bf146} (2018), 1015--1029.
\re{LMZ} R. Lu, V. Mazorchuk, K. Zhao, On simple modules over conformal Galilei algebras, {\em J.
Pure Appl. Algebra} {\bf218(10)} (2014), 1885--1899.
\re{M} O. Mathieu, Classification of irreducible weight modules, {\em Ann. Inst. Fourier (Grenoble)} {\bf50 (2)} (2000), 537--592.
\re{VM}V. Mazorchuk, Lectures on $\mathfrak{sl}_{2}(\C)$-modules, Imperial College Press, London, 2010.
\re{MS}V. Mazorchuk, C. S\"{o}derberg, Category $\mathcal{O}$ for Takiff $\mathfrak{sl}_{2}$. {\em J. Math. Phys.} {\bf60} (2019), 111702.
\re{N} J. Nilsson, Simple $\mathfrak{sl}_{n+1}$-module structures on $\mathcal{U}(\mathfrak{h})$, {\em J. Algebra} {\bf424} (2015), 294--329.
\re{N1} J. Nilsson, $\mathcal{U}(\mathfrak{h})$-free modules and coherent families, {\em J. Pure Appl. Algebra} {\bf220(4)} (2016), 1475--1488.
\re{SYZ}Y. Su, X. Yue, X. Zhu, Simple non-weight modules over Lie superalgebras of Block type, Preprint arXiv:2101.10606.
\re{T}S. J. Takiff, Rings of invariant polynomials for a class of Lie algebras, {\em Trans. Amer. Math. Soc.} {\bf160} (1971), 249--262.
\re{TZ} H. Tan, K. Zhao, $\mathcal{W}_{n}^{+}$- and $\mathcal{W}_{n}$-module structures on $U(\mathfrak{h}_{n})$, {\em J. Algebra}  {\bf424} (2015), 357--375.
\re{TZ1} H. Tan, K. Zhao, Irreducible modules over Witt algebras $\mathcal{W}_{n}$ and over $\mathfrak{sl}_{n+1}(\C)$, {\em Algebras Represent. Theory} {\bf21(4)} (2018), 787--806.
\re{Webster}B. Webster,  Gelfand-Tsetlin modules in the Coulomb context,
Preprint arXiv:1904.05415.
\re{BJW} B. J. Wilson, Highest-weight theory for truncated current Lie algebras, {\em J. Algebra} {\bf336} (2011), 1--27.
\re{YYX} H. Yang, Y. Yao, L. Xia, On non-weight representations of the $N = 2$ superconformal algebras, {\em J. Pure Appl. Algebra} {\bf225} (2021), 106529
\re{YYX1} H. Yang, Y. Yao, L. Xia, A family of non-weight modules over the super-Virasoro algebras, {\em J. Algebra} {\bf547} (2020), 538--555.
\end{thebibliography}
 \end{document}